%
%
%

\documentclass{amsproc}



\usepackage[cmtip,all]{xy}

\usepackage{etex}
\usepackage{amssymb,euscript,mathtools}
\usepackage[only,mapsfrom]{stmaryrd}

\relpenalty=10000
\binoppenalty=10000

\newcommand*{\Ceu}{\ensuremath{\EuScript{C}}}
\newcommand*{\Deu}{\ensuremath{\EuScript{D}}}
\newcommand*{\Eeu}{\ensuremath{\EuScript{E}}}
\newcommand*{\Heu}{\ensuremath{\EuScript{H}}}
\newcommand*{\Ieu}{\ensuremath{\EuScript{I}}}
\newcommand*{\Jeu}{\ensuremath{\EuScript{J}}}
\newcommand*{\Meu}{\ensuremath{\EuScript{M}}}
\newcommand*{\Neu}{\ensuremath{\EuScript{N}}}
\newcommand*{\Lbb}{\ensuremath{\mathbb{L}}}
\newcommand*{\Rbb}{\ensuremath{\mathbb{R}}}
\newcommand*{\Wcal}{\ensuremath{\mathcal{W}}}
\newcommand*{\Deltabf}{{\bf \Delta}}
\newcommand*{\colim}{\mathop{\rm colim}}
\newcommand*{\hocolim}{\mathop{\rm hocolim}}
\newcommand*{\Ob}[1]{#1}
\newcommand*{\Hom}{\mathop{\rm Hom}\nolimits}
\newcommand*{\Fun}{\mathop{\rm Fun}\nolimits}
\newcommand*{\Nat}{\mathop{\rm Nat}\nolimits}
\newcommand*{\comma}{\mathbin{\downarrow}}
\newcommand*{\Ho}{\mathop{\rm Ho}\nolimits}
\newcommand*{\hyph}{\ifmmode\hbox{-}\else\nobreakdash-\hspace{0pt}\fi}
\newcommand*{\eop}{\leavevmode\unskip\penalty9999 \hbox{}\nobreak\hfill\hbox{\ensuremath{_\Box}}}
\newcommand*{\id}{{\rm id}}
\newcommand*{\ev}{{\rm ev}}
\newcommand*{\pr}{{\rm pr}}
\newcommand*{\Stdin}{{\rm In}}
\newcommand*{\End}{\mathop{\rm End}\nolimits}
\newcommand*{\bang}{{\mathord{!}}}
\newcommand*{\defas}{\mathrel{\mathrel{\mathop:}=}}
\newcommand*{\rar}{\ensuremath{\Rightarrow}}
\newcommand*{\xto}[2][]{\xrightarrow[#1]{#2}}
\newcommand*{\lon}{\nobreak\mskip6mu plus1mu\mathpunct{}\nonscript\mkern-\thinmuskip{:}\mskip2mu\relax}
\newcommand*{\Sets}{{\bf Sets}}
\newcommand*{\sSets}{{\bf sSets}}
\newcommand*{\Top}{{\bf Top}}
\newcommand*{\Ab}{{\bf Ab}}
\DeclareMathOperator{\Ch}{Ch}

\newcommand*{\mate}{\mathrel{\mathchoice%
  {\vcenter{\hbox{${\scriptscriptstyle\bigcirc{\mkern-9mu}\bigcirc}$}}}
  {\vcenter{\hbox{${\scriptscriptstyle\bigcirc{\mkern-9mu}\bigcirc}$}}}
  {\vcenter{\hbox{${\scriptstyle\circ{\mkern-5mu}\circ}$}}}
  {\vcenter{\hbox{${\scriptscriptstyle\circ{\mkern-5mu}\circ}$}}}
}}

\newcommand*{\adj}[1][]{\def\ArgI{#1}\adjRelayI}
\newcommand*{\adjRelayI}[1][]{\def\ArgII{#1}\adjRelayII}
\newcommand*{\adjRelayII}[3][2.2em]{\ensuremath{\SelectTips{cm}{10}\xymatrix@C=#1@1{*++{#2} \ar@<1ex>[r]^-{\ArgI}^-{}="1" & *++{#3} \ar@<1ex>[l]^-{\ArgII}^-{}="2" \ar@{}"1";"2"|(.3){\hbox{}}="3"|(.7){\hbox{}}="4" \ar@{-|}"3";"4"}}}
\newcommand*{\radj}[1][]{\def\ArgI{#1}\radjRelayI}
\newcommand*{\radjRelayI}[1][]{\def\ArgII{#1}\radjRelayII}
\newcommand*{\radjRelayII}[3][2.2em]{\ensuremath{\SelectTips{cm}{10}\xymatrix@C=#1@1{*++{#2} \ar@<-1ex>[r]_-{\ArgII}^-{}="1" & *++{#3} \ar@<-1ex>[l]_-{\ArgI}^-{}="2" \ar@{}"1";"2"|(.3){\hbox{}}="3"|(.7){\hbox{}}="4" \ar@{-|}"4";"3"}}}

\makeatletter
\newsavebox\tboxa
\newsavebox\tboxb
\newlength\tdima
\newcommand*{\overbar}{\mathpalette\@overbar}
\newcommand*{\@overbar}[2]{%
    \sbox{\tboxa}{$\m@th#1\mathrm{#2}$}%
    \setbox\tboxb\null%
    \ht\tboxb\ht\tboxa%
    \dp\tboxb\dp\tboxa%
    \wd\tboxb\wd\tboxa%
    \sbox{\tboxa}{$\m@th#1{#2}$}%
    \setlength\tdima{\the\wd\tboxa}%
    \addtolength\tdima{-\the\wd\tboxb}%
    \sbox{\tboxb}{$\m@th#1\hskip\tdima\overline{\xusebox{\tboxb}}$}%
    \rlap{\usebox\tboxb}{\usebox\tboxa}}
\newcommand*{\xusebox}[1]{\mathord{{\usebox{#1}}}}
\makeatother
\newcommand*{\ol}[1]{\mkern 1.5mu\overbar{\mkern-1.5mu#1\mkern-1.5mu}\mkern 1.5mu}

\newlength\oldarraycolsep

\newdir{ >}{{}*!/-5pt/@{>}}
\newdir{c}{{}*!/-5pt/@{(}}
\newdir^{c}{{}*!/-5pt/@^{(}}
\newdir_{c}{{}*!/-5pt/@_{(}}


\newtheorem{theorem}{Theorem}[section]
\newtheorem{proposition}[theorem]{Proposition}
\newtheorem{corollary}[theorem]{Corollary}

\theoremstyle{definition}
\newtheorem{definition}[theorem]{Definition}
\newtheorem{convention}[theorem]{Convention}
\newtheorem{nomenclature}[theorem]{Nomenclature}
\newtheorem{example}[theorem]{Example}

\theoremstyle{remark}
\newtheorem{remark}[theorem]{Remark}
\newtheorem{observation}[theorem]{Observation}

\numberwithin{equation}{section}

\begin{document}

\title{Double Homotopy (Co)Limits for Relative Categories}

\author{Kay Werndli}
\address{Mathematisch Instituut, Universiteit Utrecht, Utrecht, Netherlands}
\email{k.r.werndli@uu.nl}
\thanks{The author was partially supported by Swiss NSF grant 200020 149118.}

\subjclass[2010]{55U35, 18A30, 18A40, 18G10, 18G55}

\date{\today}

\begin{abstract}
  We answer the question to what extent homotopy (co)limits in categories with weak equivalences allow for a Fubini-type interchange law. The main obstacle is that we do not assume our categories with weak equivalences to come equipped with a calculus for homotopy (co)limits, such as a derivator.
\end{abstract}

\maketitle

\setcounter{section}{-1}
\section{Introduction}
In modern, categorical homotopy theory, there are a few different frameworks that formalise what exactly a ``homotopy theory'' should be. Maybe the two most widely used ones nowadays are Quillen's model categories (see e.g.~\cite{DwyerSpalinski} or~\cite{Hovey1999}) and $(\infty,1)$\hyph categories. The latter again comes in different guises, the most popular one being quasicategories (see e.g.~\cite{JoyalQuasiNotes} or~\cite{Lurie2009}), originally due to Boardmann and Vogt \cite{Boardman1973}. Both of these contexts provide enough structure to perform homotopy invariant analogues of the usual categorical limit and colimit constructions and more generally Kan extensions.

An even more stripped-down notion of a ``homotopy theory'' (that arguably lies at the heart of model categories) is given by just working with categories that come equipped with a class of weak equivalences. These are sometimes called {\it relative categories\/} \cite{Barwick2012}, though depending on the context, this might imply some restrictions on the class of weak equivalences. Even in this context, we can still say what a homotopy (co)limit is and do have tools to construct them, such as model approximations due to Chach\'olski and Scherer~\cite{HToD} or more generally, left and right deformation retracts as introduced in~\cite{DHKS} and generalised in section~\ref{sec:construction} below.

To actually obtain a calculus of homotopy (co)limits, maybe the most general structure is that of a {\it derivator\/} due to Franke, Heller \cite{Heller1988} and Grothendieck (see also \cite{Groth2013}). Every model category has an underlying derivator \cite{HToD,Cisinski2003} and therefore a good calculus of homotopy (co)limits. The same is true for a bicomplete $(\infty,1)$\hyph category, as shown by Riehl and Verity in \cite{Riehl_Verity-Kan_preprint}.

Now, for a relative category, we might be able to construct homotopy (co)limits but not a derivator structure. The goal of this paper is to answer the question to what extent such relative categories still allow for a Fubini-type homotopy (co)limit interchange law, even in the absence of a derivator structure.

For us, the main motivation to pursue such a result was that a proof based on universal properties, rather than explicit constructions, is much more conceptually satisfying. Even in nice contexts (e.g.~when working with model categories), explicit constructions of homotopy colimits can become quite involved when no additional hypotheses (such as cofibrant generation) are imposed (c.f.~\cite{HToD}). For example, checking a seemingly simple claim such as~\ref{prop:quillen functors preserve hocolims} with explicit constructions seems rather tedious. For completeness, we also provide a few examples \ref{ex:example1}, \ref{ex:example2},~\ref{ex:example3} immediately after our main theorem that are not nice model categories but where our theorem is applicable.

In section 1, we first derive a universal property of the (strict) localisation of a category that we are going to need and which is usually only proven for explicit constructions of the localisation (e.g.~for the homotopy category of a model category). In section 2, we restate the usual definition of a derived functor and, more importantly, of an absolute derived functor. We then give an alternative, external characterisation of these, which don't seem to have been previously known but turn out to be very useful to us later on.

In section 3, we slightly generalise the approach of \cite{DHKS} for the construction of derived functors and see that the usual construction of derived functors via (co)fibrant replacements is not confined to model categories. In sections 4 and 5, for lack of references, we quickly recall the definition, coherence and Beck-Chevalley interchange condition for the calculus of mates. These are standard tools in the world of derivators but less well-known outside of it.

In the section that follows and motivated by \cite{Maltsiniotis2007}, we investigate the concept of derived adjunctions in our general context and show the relation between absolute derived functors and adjoints thereof. It is here that we obtain our first homotopically substantial result.
\setcounter{section}{6}
\setcounter{theorem}{3}
\begin{corollary}
  Let~$F\dashv G$ be an adjunction of functors between categories with weak equivalences such that an absolute total right derived functor~$\Rbb G$ of~$G$ exists. Then~$F \dashv G$ is derivable if and only if\/~$\Rbb G$ has a left adjoint.\eop
\end{corollary}

Here, an adjunction between two categories with weak equivalences is called {\it derivable\/} if the left and right adjoint respectively have absolute total left and right derived functors and these form an adjunction between the corresponding homotopy categories (this generalises the {\it deformable adjunctions\/} of \cite{DHKS}). The example to keep in mind here are Quillen adjunctions between model categories.

As we learned later, the same result was already obtained in the yet unpublished \cite{Gonzalez2012} but using different techniques. In {\it op.~cit.}, the author used internal, diagrammatic methods, whereas we use our external characterisation of absolute Kan extensions obtained in section 2.

In section 7, we apply this result to the notion of homotopy (co)limits and obtain two equivalent definitions. Finally, sections 8 and 9 contain our main results about the pointwiseness of certain homotopy colimits. What we mean by this is best exemplified by the often used case of a double homotopy pushout. Given a $3\times 3$\hyph diagram of the form
\[ \xymatrix@R=1.4em@C=1.4em{ \bullet & \bullet \ar[l] \ar[r] & \bullet \\ \bullet \ar[u] \ar[d] & \bullet \ar[l]\ar[r]\ar[u]\ar[d] & \bullet \ar[u]\ar[d] \\ \bullet & \bullet \ar[l]\ar[r] & \bullet & *!<1.4em,.5ex>{,} } \]
most homotopy theorists take it for granted that the homotopy colimit of this diagram can be calculated as a double homotopy pushout in two ways: rows first or columns first. Under suitably nice hypotheses (e.g.~working in a cofibrantly generated model category) this is certainly true, though there doesn't seem to be a formal proof of this relying on universal properties. Rather, explicit formulae for the homotopy colimits or descriptions by cofibrant replacements are used.

It is important to note that the pointwiseness of homotopy colimits (say for double homotopy pushouts as above) consists of two parts; namely that the objects in the span obtained by the row- or columnwise homotopy colimit are what one would expect and that the same is true for the morphisms between them. Our first result concerning the pointwiseness of homotopy colimits is now the following.
\setcounter{section}{9}
\setcounter{theorem}{4}
\begin{theorem}\label{thm:main thm1}
  Let~$\Ceu$ be a category with weak equivalences, $\Ieu$,~$\Jeu$ two index categories and~$J \in \Ob\Jeu$ such that all derived adjunctions in the left-hand square
  \[ \vcenter{\xymatrix@C=3em{ \Ho\bigl((\Ceu^\Ieu)^\Jeu\bigr) \cong \Ho\bigl((\Ceu^\Jeu)^\Ieu\bigr) \ar@<-2.93em>[d]_{\Ho\ev_J}_{}="21" \ar@<1ex>[r]^-{\hocolim_\Ieu}^-{}="1" & \Ho(\Ceu^\Jeu) \ar@<-1ex>[d]_{\Ho\ev_J}_{}="31" \ar@<1ex>[l]^-{\Ho\Delta}^-{}="2" \\ 
*+!<2.5em,0mm>{\Ho(\Ceu^\Ieu)} \ar@<2.07em>[u]_{\Rbb J_*}^{}="22" \ar@<1ex>[r]^-{\hocolim_\Ieu}^-{}="11" & \Ho\Ceu \ar@<1ex>[l]^-{\Ho\Delta}^-{}="12" \ar@<-1ex>[u]_{\Rbb J_*}^{}="32"
  \ar@{}"1";"2"|(.3){\hbox{}}="3" \ar@{}"1";"2"|(.7){\hbox{}}="4" \ar@{|-} "4";"3"
  \ar@{}"11";"12"|(.3){\hbox{}}="13" \ar@{}"11";"12"|(.7){\hbox{}}="14" \ar@{|-} "14";"13"
  \ar@{}"21";"22"|(.3){\hbox{}}="23" \ar@{}"21";"22"|(.7){\hbox{}}="24" \ar@{|-} "24";"23"
  \ar@{}"31";"32"|(.3){\hbox{}}="33" \ar@{}"31";"32"|(.7){\hbox{}}="34" \ar@{|-} "34";"33"
  }} \ \vcenter{\xymatrix{ \Ho\bigl((\Ceu^\Ieu)^\Jeu\bigr) \cong \Ho\bigl((\Ceu^\Jeu)^\Ieu\bigr) \ar@<-2.93em>[d]^{\Ho\ev_J} & \Ho(\Ceu^\Jeu) \ar[d]_{\Ho\ev_J}\ar[l]_-{\Ho\Delta} \\ *+!<2.5em,0mm>{\Ho(\Ceu^\Ieu)} & \Ho\Ceu \ar[l]^-{\Ho\Delta}}} \]
  exist and\/~$\Ho\Delta$ composes with\/~$\Rbb J_*$. Then the right-hand square (filled with the identity) satisfies the (horizontal) Beck-Chevalley condition. In particular, there is an isomorphism
  \[ \sigma\colon {\hocolim_\Ieu}\circ\Ho\ev_J \cong \Ho\ev_J\circ{\hocolim_\Ieu}, \qquad\text{natural in }J \in \Ob\Jeu. \]
\end{theorem}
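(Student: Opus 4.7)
My plan is to first reduce the theorem to a question about a single mate being invertible, then exploit the strict commutativity of $\Delta$ and $\ev_J$ together with the hypothesis about composition of derived right adjoints to conclude via uniqueness of absolute derived functors.

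First, I would observe that at the level of ordinary categories, the functors $\Delta_\Ieu \lon \Ceu^\Jeu \to (\Ceu^\Ieu)^\Jeu$ and $\ev_J \lon \Ceu^\Ieu \to \Ceu$ satisfy the strict equation $\Delta_\Ieu \circ \ev_J = \ev_J \circ \Delta_\Ieu$ when we identify $(\Ceu^\Ieu)^\Jeu \cong (\Ceu^\Jeu)^\Ieu$, simply because $\Delta_\Ieu$ acts trivially on the $\Jeu$-coordinate while $\ev_J$ acts trivially on the $\Ieu$-coordinate. Both functors preserve weak equivalences (argued pointwise), so they descend to give the right-hand square a strictly commuting identity $2$-cell. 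Applying the calculus of mates from sections 4 and 5 to the horizontal adjunctions $\hocolim_\Ieu \dashv \Ho\Delta$ at the outer and inner levels (which exist by Corollary 6.4 and the derivability hypothesis), this identity $2$-cell has a horizontal mate
\[ \sigma \lon \hocolim_\Ieu \circ \Ho\ev_J \rar \Ho\ev_J \circ \hocolim_\Ieu, \]
whose naturality in $J$ is built into the mate construction. Thus the problem reduces to showing that $\sigma$ is an isomorphism.

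To show this, my strategy is to identify both sides of $\sigma$ as absolute total left derived functors of the same underlying functor $\ev_J \circ \colim_\Ieu = \colim_\Ieu \circ \ev_J$. The right-hand side $\Ho\ev_J \circ \hocolim_\Ieu$ is such a derived functor almost by inspection, since $\Ho\ev_J$ is itself an (absolute) left derived functor of $\ev_J$ (being the left adjoint part of a derived adjunction) and it postcomposes with an absolute left derived functor. For the left-hand side $\hocolim_\Ieu \circ \Ho\ev_J$, the key input is the hypothesis that $\Ho\Delta$ composes with $\Rbb J_*$, meaning $\Rbb J_* \circ \Ho\Delta$ is the absolute total right derived functor of $J_* \circ \Delta$. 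Via Corollary 6.4 applied to the composite adjunction $\ev_J \circ \colim_\Ieu \dashv J_* \circ \Delta$, this composition statement for right adjoints transfers, through the mate correspondence, into the analogous composition statement for the left adjoints $\hocolim_\Ieu \circ \Ho\ev_J$.

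The main obstacle is precisely this transfer step, and it is here that the external characterisation of absolute derived functors from section 2 does the heavy lifting: it lets me test that a comparison between two candidate derived functors is invertible by inspecting a universal property that only sees the underlying (strict) functors and their comparison $2$-cells, rather than any explicit construction. Once both $\hocolim_\Ieu \circ \Ho\ev_J$ and $\Ho\ev_J \circ \hocolim_\Ieu$ are recognised as absolute left derived functors of the same composite, the uniqueness of such derived functors forces $\sigma$ to be an isomorphism, and tracing through the mate calculus confirms that $\sigma$ itself is the comparison $2$-cell witnessing this uniqueness. Naturality in $J$ then transports from the underlying strict naturality of $\ev_J$ in $J$ along the derived functor construction.
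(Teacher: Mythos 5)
The overall shape of your argument is reasonable, and you correctly identify the key hypotheses (in particular that $\Ho\Delta$ composing with $\Rbb J_*$ transfers, via Remark~\ref{rem:composition of derived adjunctions}, to $\Ho\ev_J$ composing with $\hocolim_\Ieu$). But there are two places where you assert exactly the non-trivial content and move on, and these are precisely what the paper's machinery is built to handle.

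First, you observe that $\hocolim_\Ieu\circ\Ho\ev_J$ and $\Ho\ev_J\circ\hocolim_\Ieu$ are both absolute total left derived functors of the same composite, so by uniqueness they are isomorphic --- and then claim that ``tracing through the mate calculus confirms that $\sigma$ itself is the comparison $2$-cell.'' This is not a triviality. Uniqueness of derived functors gives you \emph{some} canonical isomorphism $\phi$, characterised by respecting the counits; the mate $\sigma$ is characterised by a different universal equation involving the units and counits of the \emph{horizontal} adjunctions. A priori $\sigma$ could be a non-invertible transformation between two isomorphic functors. Showing $\sigma=\phi$ is essentially the computation carried out in Theorem~\ref{thm:derived mates}, which you would have to reproduce. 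The paper's route avoids this: it applies Beck--Chevalley interchange to switch to the vertical adjunctions $\Ho\ev_J\dashv\Rbb J_*$, where Observation~\ref{obs:beck-chevalley and evaluation} gives the strict mate on the nose, and then invokes Theorem~\ref{thm:derived mates} to conclude that the derived mate is an isomorphism in one stroke. Your approach, staying on the left-adjoint side, is structurally different but ends up needing the same theorem anyway, just less conveniently packaged.

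Second, and separately, the naturality in $J$ is asserted twice (``built into the mate construction,'' and later ``transports from the underlying strict naturality'') but never actually argued. The mate construction is performed for a fixed $J$; naturality as $J$ varies is a genuine extra assertion, because $\Ho\ev_j$ for $j\colon J\to J'$ is not part of any of the adjunctions you mated. The paper proves this as a separate step via Remark~\ref{rem:naturality of the mating bijection}: for each $j$ one forms the square whose horizontal edges are $\sigma$ and $\sigma'$, observes that it commutes iff its mate square commutes, and the mate square is the one filled with identities which commutes by strict functoriality of $\Ho\Delta$ and $\Ho\ev_j$. This is short but cannot be skipped. A minor further inexactness: you write $\ev_J\circ\colim_\Ieu = \colim_\Ieu\circ\ev_J$ as a strict equality, but these are only naturally isomorphic (the isomorphism being the strict mate, cf.\ Example~\ref{ex:beck-chevalley for colimits}); your uniqueness argument still goes through, but the statement needs correcting.
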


Here, the stated hypothesis that $\Ho\Delta$ {\it compose\/} with~$\Rbb J_*$ means that the composite of right-derived functors $\Rbb J_*\circ\Ho\Delta$ is the right-derived functor of the composite $J_*\circ\Delta$; something which is automatic when only considering Quillen pairs but not in our general context.

It is this theorem~\ref{thm:main thm1}, which allows us to conclude that the most common homotopy colimits, such as double homotopy pushouts, coproducts and telescopes can always be calculated pointwise, assuming they exist and, curiously, assuming the base category has a terminal object.

In the above theorem, applied to the double homotopy pushout example, the existence of an isomorphism $\hocolim_\Ieu\circ\Ho\ev_J \cong \Ho\ev_J\circ\hocolim_\Ieu$ ensures that the objects of the row- or columnwise homotopy colimit are the ones one would expect, while the naturality in~$J$ ensures the same for the morphisms between these objects.

It is this essential naturality condition at the end, which is hard to prove. Since the mixing of homotopy colimits and right derived functors can be inconvenient (as one might be able to construct left derived functors such as homotopy colimits but not right derived ones), we then go on to investigate how we can use the left adjoints to evaluation functors to obtain the sought for naturality condition. To that extent, we have the following result.
\setcounter{section}{9}
\setcounter{theorem}{15}
\begin{theorem}
  Let~$\Ceu$ be a category with weak equivalences and $\Ieu$,~$\Jeu$ two index categories such that for all~$J \in \Ob\Jeu$, the two evaluation functors~$\ev_J$ indicated below have fully faithful left adjoints~$J_!$ and the derived adjunctions in the square
  \[ \xymatrix@C=3.5em{ \Ho\bigl((\Ceu^\Ieu)^\Jeu\bigr)  \cong \Ho\bigl((\Ceu^\Jeu)^\Ieu\bigr) \ar@<-2.07em>[d]^{\Ho\ev_J}^{}="22" \ar@<1ex>[r]^-{\hocolim_\Ieu}^-{}="1" & \Ho(\Ceu^\Jeu) \ar@<1ex>[d]^{\Ho\ev_J}^{}="32" \ar@<1ex>[l]^-{\Ho\Delta}^-{}="2" \\ 
  *+!<2.5em,0mm>{\Ho(\Ceu^\Ieu)} \ar@{^{c}->}@<2.93em>[u]^{\Lbb J_!}^{}="21" \ar@<1ex>[r]^-{\hocolim_\Ieu}^-{}="11" & \Ho\Ceu \ar@<1ex>[l]^-{\Ho\Delta}^-{}="12" \ar@{^{c}->}@<1ex>[u]^{\Lbb J_!}^{}="31"
  \ar@{}"1";"2"|(.3){\hbox{}}="3" \ar@{}"1";"2"|(.7){\hbox{}}="4" \ar@{|-} "4";"3"
  \ar@{}"11";"12"|(.3){\hbox{}}="13" \ar@{}"11";"12"|(.7){\hbox{}}="14" \ar@{|-} "14";"13"
  \ar@{}"21";"22"|(.3){\hbox{}}="23" \ar@{}"21";"22"|(.7){\hbox{}}="24" \ar@{|-} "24";"23"
  \ar@{}"31";"32"|(.3){\hbox{}}="33" \ar@{}"31";"32"|(.7){\hbox{}}="34" \ar@{|-} "34";"33"
  } \]
  exist and there is an isomorphism~$\hocolim_\Ieu\circ\Ho\ev_J \cong \Ho\ev_J\circ\hocolim_\Ieu$. Then there is a family of isomorphisms
  \[ \beta_J\colon \hocolim_\Ieu\circ\Ho\ev_J \cong \Ho\ev_J\circ\hocolim_\Ieu \qquad\text{natural in~$J$.} \]
\end{theorem}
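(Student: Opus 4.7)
The plan is to define $\beta_J$ as the canonical Beck--Chevalley mate---natural in~$J$ by construction---and then show it is an isomorphism using the full faithfulness of~$\Lbb J_!$ together with the hypothesised (a priori non-natural) iso.

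First, I would observe that colimits in functor categories are computed pointwise, so strictly $\ev_J \circ \colim_\Ieu = \colim_\Ieu \circ \ev_J$, or equivalently, at the level of right adjoints, $\Delta \circ \ev_J = \ev_J \circ \Delta$. Since~$\ev_J$ and~$\Delta$ preserve weak equivalences, the derived functors compose strictly (section~3), so the identity $\Ho\Delta \circ \Ho\ev_J = \Ho\ev_J \circ \Ho\Delta$ lifts to the homotopy level. Mating this identity 2-cell along the two horizontal derived adjunctions $\hocolim_\Ieu \dashv \Ho\Delta$ produces the canonical transformation $\beta_J \colon \hocolim_\Ieu \circ \Ho\ev_J \Rightarrow \Ho\ev_J \circ \hocolim_\Ieu$.

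Naturality in~$J$ then comes for free. A morphism $f \colon J \to J'$ in~$\Jeu$ induces natural transformations $\ev_J \Rightarrow \ev_{J'}$ at both levels $\Ceu^\Jeu$ and $(\Ceu^\Ieu)^\Jeu$, and these are manifestly compatible with the trivial identity between~$\ev$ and~$\Delta$. Since the mates calculus recalled in section~4 is 2-functorial, whiskering with~$\ev_f$ yields the required commuting squares for~$\beta_J$ and~$\beta_{J'}$.

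The key step is showing $\beta_J$ is an isomorphism. I would pass to its vertical mate $\tilde\beta_J \colon \Lbb J_! \circ \hocolim_\Ieu \Rightarrow \hocolim_\Ieu \circ \Lbb J_!$ along $\Lbb J_! \dashv \Ho\ev_J$. By the Beck--Chevalley interchange of section~5, $\beta_J$ is iso iff $\tilde\beta_J$ is. The plan is then to show that $\tilde\beta_J$ coincides---up to the unit isomorphism of the FF adjoint---with the vertical mate $\tilde\alpha_J$ of the given iso~$\alpha_J$, which is again an iso by the same correspondence. The necessary rigidity is supplied by full faithfulness of~$J_!$: because the unit $\id \Rightarrow \Ho\ev_J \circ \Lbb J_!$ is invertible, the relevant 2-cells out of $\Lbb J_! \circ \hocolim_\Ieu$ are determined by their whiskering with~$\Ho\ev_J$, forcing agreement of the canonical and given mates.

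The hard part will be exactly this last identification: two parallel 2-cells need not agree even if one of them is invertible, so the argument must use the invertibility of the unit of $\Lbb J_! \dashv \Ho\ev_J$ in an essential way. Concretely, I expect the bulk of the work to lie in a careful pasting-diagram chase that unpacks the definitions of $\tilde\beta_J$ and $\tilde\alpha_J$ and compares their whiskerings with~$\Ho\ev_J$, from which---together with the iso $\Ho\ev_J \circ \Lbb J_! \cong \id$ and the absoluteness of the derived functors supplied by Corollary~6.4---the desired equality of mates follows.
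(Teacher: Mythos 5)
Your overall strategy is close in spirit to the paper's: define~$\beta_J$ as the canonical horizontal mate (so naturality in~$J$ comes for free from the naturality of the mating bijection, Remark~\ref{rem:naturality of the mating bijection}), then show it is an isomorphism using full faithfulness of~$\Lbb J_!$ and the given isomorphism. That is indeed what happens, with the paper's $\beta_J$ from Corollary~\ref{cor:compatibility of hocolim units} being precisely the canonical mate (it is characterised by the unit compatibility $(\Ho\ev_J)\eta = (\Ho\Delta)\beta_J\circ\eta'_{\Ho\ev_J}$, which is the defining equation of the horizontal left mate of the identity 2-cell).

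However, the justification of the key step is wrong and would actually prove too much. You invoke ``Beck--Chevalley interchange'' to claim that $\beta_J$ is an isomorphism iff its vertical mate~$\tilde\beta_J\colon \Lbb J_!\circ\hocolim_\Ieu\Rightarrow\hocolim_\Ieu\circ\Lbb J_!$ is. But the interchange theorem of Section~5 relates the \emph{horizontal left} mate to the \emph{vertical right} mate, i.e.\ it requires the vertical functors~$\Ho\ev_J$ to have \emph{right} adjoints. Here they only have left adjoints~$\Lbb J_!$, and the paper explicitly warns that in this case ``there is no nice interchange law.'' Worse, by Proposition~\ref{prop:composition of mates} your~$\tilde\beta_J$ is exactly the conjugate of the identity $\Ho\ev_J\circ\Ho\Delta = \Ho\Delta\circ\Ho\ev_J$ along the two composite adjunctions; this is the~$\alpha$ of Proposition~\ref{prop:hocolim and J shriek}, and by Corollary~\ref{cor:conjugate isomorphisms} it is \emph{always} an isomorphism, whether or not~$\beta_J$ is. So the claimed equivalence ``$\beta_J$ iso iff $\tilde\beta_J$ iso'' is false: it would show that $\hocolim_\Ieu$ always commutes with $\Ho\ev_J$, which the theorem's very statement hedges against. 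The correct tool is the last theorem of Section~5 (the one treating mating squares whose vertical functors have \emph{left} adjoints): its part~(2), applied with $M = N = \Lbb J_!$ fully faithful, says precisely that the Beck--Chevalley condition (i.e.\ $\beta_J$ being iso) holds iff there exists \emph{some} isomorphism $\hocolim_\Ieu\circ\Ho\ev_J\cong\Ho\ev_J\circ\hocolim_\Ieu$, which is the given hypothesis. The proof of that theorem does exactly the careful pasting-chase you anticipate having to do---comparing $\tilde\beta_J$ with the given iso via the invertible unit of $\Lbb J_!\dashv\Ho\ev_J$---but packages it correctly. (The paper itself routes through its bespoke Corollary~\ref{cor:compatibility of hocolim units} rather than citing that theorem directly, and then re-verifies naturality by an explicit unit chase instead of appealing to Remark~\ref{rem:naturality of the mating bijection}; this is an equivalent, if slightly more hands-on, version of your plan.)
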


\setcounter{section}{0}
\section{Localisations Reviewed}
By the localisation of a category~$\Ceu$ with respect to a class of morphisms~$\Wcal$ is meant the (comparison functor with) the category (possibly not locally small) obtained by formally inverting the morphisms in~$\Wcal$.
\begin{definition}
  Let~$\Ceu$ be a category and~$\Wcal$ a class of arrows in~$\Ceu$ (which we usually call the {\it weak equivalences\/}\index{Equivalences!weak}\index{Weak!equivalences} of~$\Ceu$). A {\it weak localisation\/}\index{Localisation!of a category}\index{Localisation!weak}\index{Weak!localisation} of~$\Ceu$ is a functor $H\colon \Ceu \to \Ceu[\Wcal^{-1}]$ sending all arrows in~$\Wcal$ to isomorphisms and having the following universal property: 
  \begin{enumerate}
    \item Whenever we have a functor~$F\colon \Ceu \to \Deu$ sending all arrows in~$\Wcal$ to isomorphisms, then there is some~$\ol F\colon \Ceu[\Wcal^{-1}] \to \Deu$ together with a natural isomorphism~$F \cong \ol F\circ H$;
    \item For every category~$\Deu$, the precomposition~$H^*\colon \Deu^{\Ceu[\Wcal^{-1}]} \to \Deu^\Ceu$ is fully faithful. That is to say, for any two $F$,~$G\colon \Ceu[\Wcal^{-1}] \to \Deu$ and any transformation~$\tau\colon F\circ H \rar G\circ H$, there is a unique~$\ol\tau\colon F \rar G$ such that~$\tau = \ol\tau_H$.
  \end{enumerate}
\end{definition}
\begin{observation}
  One easily sees that~$\Ceu[\Wcal^{-1}]$ is unique up to equivalence; that the extension~$\ol F$ in property (1) is unique up to unique isomorphism and the natural isomorphism $F \cong \ol F\circ H$ is unique up to unique automorphism of~$\ol F$.
\end{observation}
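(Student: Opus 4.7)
\emph{Proof plan.} All three assertions are standard consequences of the $2$-categorical universal property expressed by properties~(1) and~(2), and the plan is to establish them in reverse order of appearance, since the earlier ones depend on the later. The single ingredient driving everything is that full faithfulness of $H^*$ (property~(2)) implies that $H^*$ reflects isomorphisms, just as any fully faithful functor between ordinary categories does.

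First I would handle the uniqueness of the natural isomorphism. Given two natural isomorphisms $\tau,\tau'\colon F \cong \ol F\circ H$, the composite $\tau'\circ\tau^{-1}$ is a natural automorphism of $\ol F\circ H$; by fullness of $H^*$ it lifts to some $\alpha\colon \ol F \rar \ol F$ with $\alpha_H = \tau'\circ\tau^{-1}$, uniquely so by faithfulness, and $\alpha$ is an automorphism because $H^*$ reflects isos. Exactly the same template handles the uniqueness of the extension $\ol F$: given two extensions $(\ol F,\tau)$ and $(\ol F',\tau')$ of the same $F$, the natural isomorphism $\tau'\circ\tau^{-1}\colon \ol F\circ H \cong \ol F'\circ H$ lifts uniquely via property~(2) to a natural transformation $\beta\colon \ol F \rar \ol F'$, which is necessarily an isomorphism by reflection.

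Finally, for the uniqueness of $\Ceu[\Wcal^{-1}]$ up to equivalence I would argue in the usual abstract fashion. Given a second weak localisation $H'\colon \Ceu \to \Eeu$, apply property~(1) of $H$ to $H'$ (which inverts $\Wcal$) to obtain some $F\colon \Ceu[\Wcal^{-1}] \to \Eeu$ with $H' \cong F\circ H$, and symmetrically some $G\colon \Eeu \to \Ceu[\Wcal^{-1}]$ with $H \cong G\circ H'$. Composing yields $H \cong G\circ F\circ H$; since $\id$ is also an extension of $H$ along $H$, the uniqueness of extensions just proved forces $G\circ F \cong \id_{\Ceu[\Wcal^{-1}]}$, and the symmetric argument gives $F\circ G \cong \id_\Eeu$. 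The only subtlety worth flagging is that property~(1) produces extensions only up to natural isomorphism rather than strictly, which is precisely why the uniqueness statement for extensions must be invoked in this final step rather than merely unravelling definitions.
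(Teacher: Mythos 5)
Your proof is correct, and since the paper gives no proof of this observation (it is simply stated as something ``one easily sees''), there is nothing to compare against beyond the implicit standard argument, which yours matches. The key lever you identify---that full faithfulness of $H^*$ from property~(2) implies it reflects natural isomorphisms---is exactly what makes all three claims go through, and your handling of the final equivalence claim correctly invokes the already-established uniqueness of extensions (rather than trying to chase identities on the nose, which would fail since property~(1) only gives an extension up to isomorphism). One small point of hygiene: for the uniqueness-up-to-equivalence argument, both localisations must satisfy property~(2), since you need fullness and faithfulness of both $H^*$ and $(H')^*$ to run the argument symmetrically; you do assume $H'$ is a weak localisation, so this is covered, but it is worth noting explicitly that the symmetric half of the argument really uses it.
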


Most authors will define localisations differently. Namely, $\ol F$ in (1) needs to be unique and the natural isomorphism $F \cong \ol F\circ H$ is required to be an identity, while property (2) is left out entirely. We call this a {\it strict localisation\/}.
\begin{definition}
  A {\it (strict) localisation\/}\index{Localisation!strict}\index{Strict!localisation} of a category~$\Ceu$ at a class of arrows~$\Wcal$ (again called the {\it weak equivalences\/}) is a functor~$H\colon \Ceu \to \Ceu[\Wcal^{-1}]$ that sends arrows in~$\Wcal$ to isomorphisms and such that every functor~$F\colon \Ceu \to \Deu$ that does so factors uniquely through~$H$ as $F = \ol F \circ H$. It follows that~$\Ceu[\Wcal^{-1}]$ is unique up to isomorphism. We will seldomly have the situation where we are given two different classes of weak equivalences in~$\Ceu$ and will thus just write~$\Ho\Ceu \defas \Ceu[\Wcal^{-1}]$, leaving the class~$\Wcal$ implicit.

\end{definition}
\begin{example}
  If every arrow in~$\Wcal$ is already an isomorphism, then~$\id_\Ceu\colon \Ceu \to \Ceu$ is a localisation of~$\Ceu$ at~$\Wcal$. In particular, if~$\Ceu$ is a groupoid, then~$\id_\Ceu$ is a localisation at any class of weak equivalences.
\end{example}
\begin{example}
  If~$\Ceu$ is a model category, then the canonical functor~$\Ceu \to \Ho\Ceu$ is a localisation of~$\Ceu$ with respect to its weak equivalences. In fact, having well-behaved localisations is the reason why the theory of model categories was developed in the first place.
\end{example}
\begin{convention}
  If~$\Wcal$ is a class of weak equivalences in a category~$\Ceu$, one can always add the isomorphisms of~$\Ceu$ to~$\Wcal$ without changing the localisation and so we shall always assume that isomorphisms are weak equivalences.
\end{convention}

It seems curious that part (2) from the definition of a localisation is omitted entirely in the strict version. After all, it has some consequences, e.g.~for derived functors, which seem to be important enough to be proven for an explicit construction of a localisation (e.g.~\cite[5.9]{DwyerSpalinski} for the homotopy category of a model category). But using an explicit construction is not necessary as the following proof shows.%
\begin{proposition}\label{prop:strict localisation is weak}
  If~$H\colon \Ceu \to \Ho\Ceu$ is a strict localisation of a category~$\Ceu$ at a class~$\Wcal$ of morphisms, then, for every category~$\Deu$, the precomposition functor~$H^*\colon \Deu^{\Ho\Ceu} \to \Deu^\Ceu$ is fully faithful.
\end{proposition}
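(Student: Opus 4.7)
The plan is to reduce the statement to the strict factorisation property via the standard arrow-category encoding of natural transformations. Write $\mathbf{2}$ for the arrow category $\{0 \to 1\}$ and $d_0, d_1 \colon \Deu^{\mathbf{2}} \to \Deu$ for the domain and codomain projections. A natural transformation between two functors $X, Y \colon \Eeu \to \Deu$ is the same datum as a functor $\Eeu \to \Deu^{\mathbf{2}}$ whose composites with $d_0$ and $d_1$ are $X$ and $Y$ respectively; moreover a morphism in $\Deu^{\mathbf{2}}$ is an isomorphism exactly when its two components are.

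Fix functors $F, G \colon \Ho\Ceu \to \Deu$ and a natural transformation $\tau \colon F\circ H \rar G\circ H$, and encode $\tau$ as a functor $T \colon \Ceu \to \Deu^{\mathbf{2}}$ with $d_0\circ T = F\circ H$ and $d_1\circ T = G\circ H$. The key step is to observe that $T$ sends weak equivalences to isomorphisms: for $w \in \Wcal$, the morphism $T(w)$ in $\Deu^{\mathbf{2}}$ has components $F(H(w))$ and $G(H(w))$, both of which are isomorphisms in $\Deu$ since $H(w)$ already is one. The strict localisation property then yields a unique factorisation $T = \ol T \circ H$ for some $\ol T \colon \Ho\Ceu \to \Deu^{\mathbf{2}}$.

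Now $d_0\circ \ol T$ and $F$ are two functors $\Ho\Ceu \to \Deu$ whose composites with $H$ both equal $F\circ H$; since $F\circ H$ sends weak equivalences to isomorphisms, the uniqueness clause of the strict localisation forces $d_0\circ \ol T = F$, and symmetrically $d_1\circ \ol T = G$. Thus $\ol T$ encodes a natural transformation $\ol\tau \colon F \rar G$ with $\ol\tau_H = \tau$, giving fullness. Faithfulness is automatic: any two candidates $\ol\tau_1, \ol\tau_2$ satisfying $(\ol\tau_i)_H = \tau$ correspond to functors $\ol T_i \colon \Ho\Ceu \to \Deu^{\mathbf{2}}$ with $\ol T_i \circ H = T$, and the uniqueness part of the strict localisation property forces $\ol T_1 = \ol T_2$, whence $\ol\tau_1 = \ol\tau_2$.

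The one non-formal ingredient is thus the verification that $T$ preserves weak equivalences, which is precisely where the hypothesis that $H$ inverts $\Wcal$ enters; the remainder of the argument is bookkeeping against the universal property of the strict localisation.
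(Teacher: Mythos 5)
Your proof is correct and follows essentially the same route as the paper's: encode the pair $(F\circ H, G\circ H, \tau)$ as a single functor $T\colon\Ceu \to \Deu^{\mathbf 2}$ (the paper writes $[1]$ for $\mathbf 2$ and uses the twist isomorphism $(\Deu^\Ceu)^{[1]}\cong(\Deu^{[1]})^\Ceu$), observe that $T$ inverts $\Wcal$ because its components do, factor it uniquely through $H$, and read off $\ol\tau$. One small point in your favour: you spell out explicitly why $d_0\circ\ol T = F$ and $d_1\circ\ol T = G$ via the uniqueness clause applied to $F\circ H$ and $G\circ H$; the paper introduces auxiliary functors $F' = \ev_0\circ\ol T$, $G' = \ev_1\circ\ol T$ and then tacitly identifies them with $F$ and $G$ (which is needed for ``$\tau = \ol\tau_H$'' to even typecheck). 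Your version is therefore slightly more complete, but there is no difference in the underlying idea.
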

\begin{proof}
  Let~${[1]}$ be the {\it interval category\/}\index{Interval!category}\index{Category!interval} with two objects $0$,~$1$ and exactly one non-identity morphism $i\colon 0 \to 1$. There is the canonical twist isomorphism $(\Deu^\Ceu)^{[1]} \cong (\Deu^{[1]})^\Ceu$ so that pairs of functors $F$,~$G\colon \Ceu \to \Deu$ together with~$\tau\colon F \rar G$ correspond to functors~$\Ceu \to \Deu^{[1]}$. Now given $F$,~$G\colon \Ho\Ceu \to \Deu$ together with $\tau\colon F\circ H \rar G\circ H$, these determine
  \[ T\colon \Ceu \to \Deu^{[1]}\text{ with }(TC)i = \tau_C \text{ and }(Tf)_0 = FHf,\, (Tf)_1 = GHf\text{ for }f\colon C \to C'. \]
  Under~$T$, arrows~$f \in \Wcal$ are sent to isomorphisms because~$HFf$ and~$GFf$ are invertible and so we get a unique~$\ol T\colon \Ho\Ceu \to \Deu^{[1]}$ such that~$T = \ol T\circ H$. Under the above twist isomorphism, this corresponds to a unique pair of functors
  \[ F',\, G'\colon \Ho\Ceu \to \Deu \quad\text{defined by}\quad F' = \ev_0\circ\ol T \text{ and } G' = \ev_1\circ\ol T \]
  together with $\ol\tau\colon F' \rar G'$ defined by $\ol\tau_C = (\ol TC)i$ that satisfies~$\tau = \ol\tau_H$.
\end{proof}

\begin{nomenclature}
  For brevity reasons, if~$\Ceu$ and~$\Deu$ are categories with weak equivalences, we call a functor~$F\colon \Ceu \to \Deu$ {\it homotopical\/}\index{Homotopical!functor}\index{Functor!homotopical} iff it preserves weak equivalences. Moreover, by a {\it natural weak equivalence\/}\index{Weak equivalence!natural}\index{Natural!weak equivalences}, we mean a natural transformation~$\tau\colon F \rar G$, all of whose components are weak equivalences.
\end{nomenclature}

Assuming we have an explicit localisation construction (e.g.~using zig-zags \cite{Borceux1}), we note that~$\Ho$ is strictly 2\hyph functorial. In particular, any adjunction $F\dashv G$ of homotopical functors yields an adjunction $\Ho F\dashv\Ho G$ between the correspondig homotopy categories.
\begin{observation}
  Let~$\Ceu$ be a category with weak equivalences and~$H\!\colon\! \Ceu \to \Ho\Ceu$ a localisation. If an object~$0$ is initial in~$\Ceu$, then~$H0$ is initial in~$\Ho\Ceu$.
\end{observation}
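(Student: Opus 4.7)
The plan is to recognise the initiality of $0 \in \Ceu$ as an adjunction and transport it through $\Ho$ using its $2$\hyph functoriality. I would first observe that the terminal category~$\mathbf{1}$, whose only morphism is the identity at its sole object~$*$, is its own localisation with respect to the trivial class of weak equivalences by the first example of this section, so $\Ho\mathbf{1} = \mathbf{1}$. Writing $c_0\colon \mathbf{1} \to \Ceu$ for the functor picking out $0$ and $\bang_\Ceu\colon \Ceu \to \mathbf{1}$ for the unique functor, I would then note that the initiality of $0$ is equivalent to the adjunction $c_0 \dashv \bang_\Ceu$, whose unit is the identity and whose counit $c_0\circ\bang_\Ceu \rar \id_\Ceu$ has components the unique maps $0 \to C$.

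Both $c_0$ and $\bang_\Ceu$ are trivially homotopical: the only morphism of $\mathbf{1}$ is sent by $c_0$ to $\id_0$, and every morphism of $\Ceu$ is sent by $\bang_\Ceu$ to $\id_*$. By the $2$\hyph functoriality of $\Ho$ on homotopical functors (noted in the remark immediately preceding this observation), the adjunction thus descends to $Hc_0 \dashv \Ho\bang_\Ceu$ between $\mathbf{1}$ and $\Ho\Ceu$. Since $Hc_0$ picks out $H0$ and $\mathbf{1}(*, *)$ is a singleton, the adjunction isomorphism $\Ho\Ceu(H0, X) \cong \mathbf{1}(*, \Ho\bang_\Ceu(X))$ forces $\Ho\Ceu(H0, X)$ to be a singleton for every $X \in \Ho\Ceu$, exhibiting $H0$ as initial.

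The main delicate point is the appeal to $2$\hyph functoriality, which the preceding remark only asserts once one fixes an explicit construction of localisations. Since any two localisations of~$\Ceu$ are unique up to equivalence and the property of being an initial object is invariant under equivalence, the $2$\hyph functoriality obtained from a concrete construction (e.g.~zig-zags) carries over to the given $H$, and the above argument goes through.
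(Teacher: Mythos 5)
Your proposal is correct and is essentially the paper's own argument, which reads in full: ``Both adjoints in $0\colon \{\ast\} \rightleftarrows \Ceu\lon\bang$ are homotopical.'' You have merely unfolded the appeal to $2$\hyph functoriality of $\Ho$ and the resulting descended adjunction that the paper leaves implicit, and your closing remark about transporting the conclusion from an explicit localisation to the given~$H$ via uniqueness up to equivalence is a reasonable (if tacit in the paper) piece of bookkeeping.
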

\begin{proof}
  Both adjoints in~$0\colon \{\ast\} \rightleftarrows \Ceu\lon\bang$ are homotopical.
\end{proof}
\section{Derived Functors}
  Consider a category $\Ceu$ with weak equivalences. Time and again, one finds oneself in the situation where one wants to study some~$F\colon \Ceu \to \Deu$ that doesn't map weak equivalences to isomorphisms (so that it cannot be extended along the localisation). Sometimes this is conceived as a defect of~$F$ (e.g.~for~$F$ a [co]limit functor) and at other times as an interesting peculiarity that can be exploited to construct invariants (e.g.~using $\Hom$\hyph functors, tensoring or global sections). In either case, the approach is usually to approximate~$F$ as well as possible by a functor on~$\Ho\Ceu$.%
\begin{definition}
  Let~$H\colon \Ceu \to \Ho\Ceu$ be a localisation of a category with weak equivalences and $F\colon \Ceu \to \Deu$. Recall that a {\it left derived functor\/}\index{Functor!left derived}\index{Derived!functor}\index{Left!derived functor}~$(LF,\lambda\colon LF\circ H \rar F)$ of~$F$ is a right Kan extension of~$F$ along~$H$. That is, for every~$L\colon \Ho\Ceu \to \Deu$ together with~$\tau\colon L\circ H \rar F$, there is a unique transformation~$\ol\tau\colon L \rar LF$ such that~$\lambda\circ\ol\tau_H = \tau$. The universal 2\hyph arrow~$\lambda$ is called the {\it counit\/}\index{Counit!of a left derived functor} of the left derived functor. Dually (more precisely ``{\it co\/}-dually'', i.e.~just reversing 2-cells), one defines a {\it right derived functor\/}\index{Functor!right derived}\index{Right!derived functor}~$(RF,\rho)$ of~$F$, whose universal 2\hyph arrow is called its {\it unit\/}\index{Unit!of a right derived functor}.

  If~$\Deu$ also comes with a class of weak equivalences and a localisation functor~$H'\colon \Deu \to \Ho\Deu$, we will usually be more interested in the {\it total left derived functor\/}\index{Deriverd functor!total left}\index{Left!derived functor!total}, which is the left derived functor~$\Lbb F \defas L(H'\circ F)$ of~$H'\circ F$. Dually for the {\it total right derived functor\/}\index{Derived functor!total right}\index{Right!derived functor!total}~$\Rbb F \defas R(H'\circ F)$.%
\end{definition}
\begin{remark}
  Being a terminal object in the category~$H^*\comma F$, a left derived functor~$(LF,\lambda)$ is unique up to unique isomorphism. In particular, for a fixed~$LF$, its counit~$\lambda$ is unique up to precomposition with~$\alpha_H$ for some unique automorphism~$\alpha$ of~$LF$.
\end{remark}
  As a special case of this universal property, we obtain that the operations~$L$ (resp.\ $\Lbb$) and~$R$ (resp.\ ~$\Rbb$) are functorial.
\begin{definition}
  Let $F$,~$F'\colon \Ceu \to \Deu$ have right Kan extensions~$(LF,\lambda)$ and $(LF',\lambda')$ along some~$H\colon \Ceu \to \Heu$ and let~$\sigma\colon F \rar F'$ be a natural transformation. Then there is a unique~$L\sigma \defas \ol{\sigma\circ\lambda}\colon LF \rar LF'$ such that~$\sigma\circ\lambda = \lambda'\circ (L\sigma)_H$. Dually, if $G$,~$G'\colon \Ceu \to \Deu$ have left Kan extensions~$(RG,\rho)$ and~$(RG',\rho')$ along~$H$ and~$\tau\colon G \rar G'$, then there is a unique~$R\tau \defas \ol{\rho'\circ\sigma}\colon RG \rar RG'$ such that $(R\tau)_H\circ\rho = \rho'\circ\tau$. The unicity of these shows that~$L$ and~$R$ strictly preserve vertical compositions of natural transformations as well as identities. Similarly for the totally derived versions~$\Lbb\sigma \defas L(H'\sigma)$ and~$\Rbb\tau \defas R(H'\tau)$ for $H'\colon \Deu\to \Heu'$.
\end{definition}

There is the following convenient external characterisation of derived functors. Unfortunately, there are some class-theoretic difficulties involved (cf.\ the remark below).
\begin{proposition}\label{prop:external characterisation of derived functors}
  If~$(LF,\lambda)$ is a right Kan extension of~$F\colon \Ceu \to \Deu$ along some functor~$H\colon \Ceu \to \Heu$, then
  \[ \varphi_{X}\colon \Nat(X,LF) \to \Nat(X\circ H, F),\, \tau \mapsto \lambda\circ\tau_H \]
  is a bijection, natural in~$X\colon \Heu \to \Deu$. Conversely, if there is a functor~$LF\colon \Heu \to \Deu$ together with a natural family of bijections
  \[ \varphi_{X}\colon \Nat(X,LF) \to \Nat(X\circ H,F) \]
  indexed by all functors~$X\colon \Heu \to \Deu$, then~$(LF,\varphi_{LF}\id_{LF})$ is a left derived functor of~$F$. Moreover, these two constructions are inverse to each other.
\end{proposition}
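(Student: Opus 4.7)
The plan is to observe that the forward direction is essentially just a restatement of the universal property of a right Kan extension. Taking $X = L$ in the formula $\varphi_X(\bar\tau) = \lambda\circ\bar\tau_H$, the universal property asserts exactly that every element $\tau\colon X\circ H\rar F$ of the codomain admits a unique preimage $\bar\tau$, i.e.~that $\varphi_X$ is a bijection. Naturality in $X$ is a brief calculation: for $\sigma\colon X' \to X$ in $\Deu^\Heu$ and $\bar\tau\colon X \rar LF$, one has $\varphi_{X'}(\bar\tau\circ\sigma) = \lambda\circ(\bar\tau\circ\sigma)_H = \lambda\circ\bar\tau_H\circ\sigma_H = \varphi_X(\bar\tau)\circ\sigma_H$.

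For the converse, given the natural family~$\varphi$, I would set $\lambda\defas\varphi_{LF}(\id_{LF})$ and then prove the general formula
\[ \varphi_X(\sigma) = \lambda\circ\sigma_H \qquad \text{for all }X\colon\Heu\to\Deu\text{ and all }\sigma\colon X\rar LF. \]
This follows by applying naturality of $\varphi$ to the morphism $\sigma\colon X \to LF$ in $\Deu^\Heu$ together with the element $\id_{LF}\in\Nat(LF,LF)$. Once this identity is established, bijectivity of each $\varphi_X$ translates directly into the universal property characterising $(LF,\lambda)$ as a right Kan extension of~$F$ along~$H$.

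Finally, that the two constructions are mutually inverse is an immediate check. Starting from $(LF,\lambda)$ and building $\varphi$ by $\varphi_X(\bar\tau) = \lambda\circ\bar\tau_H$, one recovers $\varphi_{LF}(\id_{LF}) = \lambda\circ\id_H = \lambda$. Conversely, starting from $\varphi$ and defining $\lambda \defas \varphi_{LF}(\id_{LF})$, the above formula shows that the family rebuilt from $\lambda$ coincides with the original~$\varphi$.

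There is no genuine mathematical obstacle here; everything reduces to unpacking the universal property and using naturality once. The only delicate point, which presumably motivates the author's parenthetical about class-theoretic difficulties, is that the family $\varphi_X$ is indexed by the (possibly large) class of all functors $\Heu\to\Deu$, so ``$\varphi$ is a natural family of bijections'' has to be interpreted componentwise rather than as a natural isomorphism of set-valued functors on $\Deu^\Heu$. This does not affect the argument, since the single naturality square used above only involves the two objects $X$ and $LF$.
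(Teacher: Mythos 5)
Your proof is correct and follows essentially the same path as the paper's: the forward direction unpacks the universal property, and the converse defines $\lambda\defas\varphi_{LF}(\id_{LF})$ and then chases $\id_{LF}$ through the naturality square for $\sigma^*$ (the paper's $\bar\tau^*$) to recover $\varphi_X(\sigma)=\lambda\circ\sigma_H$ and hence the universal property. The extra remark about size issues matches a remark the paper makes immediately after the proposition.
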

\begin{proof}
  The map~$\tau \mapsto \lambda\circ(H\tau)$ being a bijection is just the universal property of a right Kan extension and naturality is easy. Conversely, if there is a natural family~$(\varphi_{X})_{X}$ of bijections as in the proposition, we put~$\lambda \defas \varphi_{LF}\id_{LF}$. Now for any~$X\colon \Heu \to \Deu$ together with~$\tau\colon X\circ H \rar F$ there is a unique~$\ol\tau\colon X \rar LF$ such that~$\varphi_{X}\ol\tau = \tau$. Chasing~$\id_{LF}$ around the naturality square
  \begin{samepage}\begin{equation}\label{eqn:naturality of kan bijections} \vcenter{\xymatrix{ \Nat(LF,LF) \ar[d]_{{\ol\tau}^*} \ar[r]^-{\varphi_{LF}} & \Nat(LF\circ H, F) \ar[d]^{{\ol\tau}_H^*} \\ \Nat(X,LF) \ar[r]_-{\varphi_{X}} & \Nat(X\circ H, F) & *!<2.2em,.5ex>{,} }} \end{equation}
    we see that~$\ol\tau$ is the unique transformation~$X \rar LF$ such that~$\lambda\circ\ol\tau_H = \tau$.\end{samepage}
\end{proof}
\begin{remark}
  In ordinary NBG class theory, one cannot properly formalise the above proposition because in general, there is no class of all functors~$X\colon \Heu \to \Deu$. The obvious remedies for this (apart from never using the external characterisation) is to assume the existence of a universe or switch to a higher class theory that has 2\hyph classes, whose relation to ordinary classes is the same as that of classes to sets (such as in \cite{JoyCats}).
\end{remark}

Of particular importance for homotopy theory are the so-called {\it absolute\/} Kan extensions, i.e.~those preserved by any morphism, as was observed by Maltsiniotis~\cite{Maltsiniotis2007}. Their importance comes from the fact that all derived functors that arise from Quillen adjunctions are absolute and that there is a nice interplay between adjunctions and absolute derived functors as we shall see in section~\ref{sec:derived adjunctions}.
\begin{definition}
  A right Kan extension~$(LF,\lambda)$ of a functor~$F\colon \Ceu \to \Deu$ along $H\colon \Ceu \to \Heu$ (and in particular a [total] left derived functor) is called {\it absolute\/}\index{Kan extension!absolute}\index{Derived functor!absolute}\index{Left!derived functor!absolute}\index{Right!derived functor!absolute}\index{Absolute!derived functor}\index{Left!Kan extension!absolute}\index{Right!Kan extension!absolute}\index{Absolute!Kan extension} iff for every~$Y\colon \Deu \to \Eeu$, the composite~$(Y\circ LF,Y\lambda)$ is a right Kan extension of~$Y\circ F$ along~$H$. Dually for left Kan extensions (and in particular [total] right derived functors).
\end{definition}
\begin{example}
  Given two functors~$F\colon \Ceu \rightleftarrows \Deu\lon G$, then~$F \dashv G$ with unit~$\eta$ iff~$(G,\eta)$ is an absolute left Kan extension of~$\id_\Ceu$ along~$F$ (cf.\ \ref{prop:adjunctions and absolute Kan extensions}). Dually, $F \dashv G$ with counit~$\varepsilon$ iff~$(F,\varepsilon)$ is an absolute right Kan extension of~$\id_\Deu$ along~$G$.
\end{example}
\begin{example}
  If~$\Ceu$ and~$\Deu$ are two categories, each equipped with a class of weak equivalences and~$F\colon \Ceu \to \Deu$ is homotopical, then~$\Ho F\colon \Ho\Ceu \to \Ho\Deu$ together with the identity transformation is both an absolute total left and an absolute total right derived functor of~$F$.
\end{example}
\begin{remark}
  We have already seen that taking Kan extensions preserves vertical compositions of natural transformations. In the absolute case, this is also true for horizontal compositions (which doesn't even make sense in the non-absolute case). To wit, consider two pairs of parallel functors $F$,~$F'\colon \Ceu \to \Deu$, $Y$,~$Y'\colon \Deu \to \Eeu$ together with $\sigma\colon F \rar F'$,~$\tau\colon Y \rar Y'$ such that~$F$ and~$F'$ have absolute right Kan extensions~$(LF,\lambda)$ and~$(LF',\lambda')$ along an~$H\colon \Ceu \to \Heu$. Then $L(\tau\star\sigma) = \tau\star L\sigma$ (where $\star$ is horizontal composition of natural transformations).
\end{remark}
\begin{proof} For an arbitrary object~$C \in \Ob{\Ceu}$, we easily calculate
  \begin{align*}
    Y'\lambda'_C \circ (\tau\star L\sigma)_{HC} &\stackrel{\hphantom{\text{\tiny $\tau$ nat}}}{=} Y'\lambda'_C\circ Y'(L\sigma)_{HC}\circ\tau_{(LF)HC} = Y'\sigma_C\circ Y'\lambda_C\circ\tau_{(LF)HC} \\
      &\stackrel{\text{\tiny $\tau$ nat}}{=} Y'\sigma_C\circ\tau_{FC}\circ Y\lambda_C = (\tau\star\sigma)_C\circ G\lambda_C.
    \end{align*}
\end{proof}

  As an obvious next step, we can adapt the external characterisation \ref{prop:external characterisation of derived functors} to the case of absolute derived functors (again stated in larger generality). 
\begin{proposition}  
  If~$(LF,\lambda)$ is an absolute right Kan extension of~$F\colon \Ceu \to \Deu$ along some functor~$H\colon \Ceu \to \Heu$, then
  \[ \varphi_{\Eeu,X,Y}\colon \Nat(X,Y\circ LF) \to \Nat(X\circ H,Y\circ F),\, \tau \mapsto Y\lambda\circ\tau_H \]
  is a bijection natural in~$\Eeu$,~$X\colon \Heu \to \Eeu$ and~$Y\colon \Deu \to \Eeu$. Conversely, if there is~$LF\colon \Heu \to \Deu$ together with a family of bijections
  \[ \varphi_{\Eeu,X,Y}\colon \Nat(X,Y\circ LF) \to \Nat(X\circ H,Y\circ F) \]
  natural in the category~$\Eeu$ and the functor~$X\colon \Heu \to \Eeu$ (naturality in~$Y\colon \Deu \to \Eeu$ is automatic), then~$(LF,\varphi_{\Deu,LF,\id_\Deu}\id_{LF})$ is a total left derived functor of~$F$. Moreover, these two constructions are inverse to each other.
\end{proposition}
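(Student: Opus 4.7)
The plan is to reduce everything to Proposition~\ref{prop:external characterisation of derived functors} by post-composing with~$Y$. For the forward direction, I fix an absolute right Kan extension~$(LF,\lambda)$ and, given arbitrary~$\Eeu$, $X\colon \Heu \to \Eeu$ and~$Y\colon \Deu \to \Eeu$, observe that absoluteness is precisely the statement that $(Y\circ LF, Y\lambda)$ is a right Kan extension of~$Y\circ F$ along~$H$. Proposition~\ref{prop:external characterisation of derived functors} then immediately furnishes the bijection $\varphi_{\Eeu, X, Y}\colon \tau \mapsto Y\lambda\circ\tau_H$. Naturality in~$X$ falls out of that same proposition; naturality in~$\Eeu$ (along $\Psi\colon \Eeu \to \Eeu'$) reduces to the identity $\Psi(Y\lambda \circ \tau_H) = (\Psi Y)\lambda \circ (\Psi\tau)_H$; and naturality in~$Y$ along $\alpha\colon Y \rar Y'$ boils down to the naturality square $\alpha_F \circ Y\lambda = Y'\lambda \circ \alpha_{LF\circ H}$ obtained by applying~$\alpha$ to the components of~$\lambda$.

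For the converse, I set $\lambda \defas \varphi_{\Deu, LF, \id_\Deu}\id_{LF}$ and first chase~$\id_{LF}$ around the naturality square for the pre-composition map~$\tau^*$ (exactly as in the proof of Proposition~\ref{prop:external characterisation of derived functors}) to obtain $\varphi_{\Deu, X, \id_\Deu}\tau = \lambda \circ \tau_H$ for every~$\tau\colon X \rar LF$. Combined with the bijectivity hypothesis, this already makes $(LF,\lambda)$ a right Kan extension. To upgrade to absoluteness, given any~$Y\colon \Deu \to \Eeu$, I invoke naturality in~$\Eeu$ along~$Y$ itself to compute
\[ \varphi_{\Eeu, Y\circ LF, Y}(\id_{Y\circ LF}) = \varphi_{\Eeu, Y\circ LF, Y\circ\id_\Deu}(Y\id_{LF}) = Y\varphi_{\Deu, LF, \id_\Deu}\id_{LF} = Y\lambda, \]
and then the same pre-composition chase in the~$X$ variable gives $\varphi_{\Eeu, X, Y}\tau = Y\lambda \circ \tau_H$ for arbitrary~$\tau\colon X \rar Y\circ LF$. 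A final application of Proposition~\ref{prop:external characterisation of derived functors} to~$Y\circ F$ and~$Y\circ LF$ identifies the latter as a right Kan extension, which is exactly absoluteness.

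The two constructions are inverse to each other essentially by inspection: starting from~$(LF,\lambda)$ and producing~$\varphi$, the recovered counit is $\id_\Deu\lambda \circ (\id_{LF})_H = \lambda$, while starting from~$\varphi$ and producing~$\lambda$, the formula $\varphi_{\Eeu,X,Y}\tau = Y\lambda \circ \tau_H$ just derived returns the original family. The one mild subtlety is justifying the parenthetical remark that $Y$-naturality is automatic from naturality in~$\Eeu$: unpacking this requires encoding a transformation $\alpha\colon Y \rar Y'$ as a single functor $\Deu \to \Eeu^{[1]}$ and then whiskering by $\ev_0, \ev_1\colon \Eeu^{[1]} \to \Eeu$, the interval-category device already used in Proposition~\ref{prop:strict localisation is weak}.
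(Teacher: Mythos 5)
Your proof is correct and follows essentially the same route as the paper's: establish $\varphi_{\Eeu,X,Y}\tau = Y\lambda\circ\tau_H$ by chasing $\id_{LF}$ around the naturality squares for $\Eeu$ (applying $Y_*$) and for $X$ (applying $\tau^*$), and then read off the universal property. The paper combines those two chases into one three-row commutative diagram, whereas you split them into two steps; this is a cosmetic difference. One small remark on your final paragraph: the interval-category encoding is unnecessary for justifying that $Y$-naturality is automatic. Once the formula $\varphi_{\Eeu,X,Y}\tau = Y\lambda\circ\tau_H$ has been extracted from $\Eeu$- and $X$-naturality, $Y$-naturality along $\alpha\colon Y \rar Y'$ is just the naturality square $\alpha_F\circ Y\lambda = Y'\lambda\circ\alpha_{LF\circ H}$ — precisely the computation you already carried out for the forward direction — so no $\Eeu^{[1]}$ device is needed.
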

\begin{proof}
  The bijectivity of~$\tau \mapsto Y\lambda\circ\tau_H$ is just the universal property of the right Kan extension~$(Y\circ LF,Y\lambda)$ and naturality is easy. For the converse claim, we put~$\lambda \defas \varphi_{\Deu,LF,\id_\Deu}\id_{LF}$. For $Y\colon \Deu \to \Eeu$,~$X\colon \Heu \to \Eeu$ and~$\tau\colon X\circ H \rar Y\circ F$ there is a unique~$\ol\tau\colon X \rar Y\circ LF$ such that~$\varphi_{\Eeu,X,Y}\ol\tau = \tau$ and chasing~$\id_{LF}$ around the commutative diagram
  \[ \xymatrix@C=4em{ \Nat(LF,LF) \ar[r]^-{\varphi_{\Deu,LF,\id_\Deu}} \ar[d]_{Y_*} & \Nat(LF\circ H, F) \ar[d]^{Y_*} \\ \Nat(Y\circ LF, Y\circ LF) \ar[r]^-{\varphi_{\Eeu,Y\circ LF,Y}} \ar[d]_{\ol\tau^*} & \Nat(Y\circ LF\circ H, Y\circ F) \ar[d]^{\ol\tau_H^*} \\ \Nat(X,Y\circ LF) \ar[r]_-{\varphi_{\Eeu,X,Y}} & \Nat(X\circ H, Y\circ F) } \]
  we obtain that indeed,~$\ol\tau$ is the unique transformation satisfying~$\tau = Y\lambda\circ\ol\tau_H$.
\end{proof}

  Because we shall usually work with total derived functors, let us quickly restate this theorem for the total derived case.
\begin{corollary}
  Let $\Ceu$,~$\Deu$ be categories with weak equivalences and $H_\Ceu$,~$H_\Deu$ the corresponding localisations. Moreover, let~$F\colon \Ceu \to \Deu$ and\/ $\Lbb F\colon \Ho\Ceu \to \Ho\Deu$. If\/~$\Lbb F$ is an absolute total left derived functor of\/~$F$ with counit~$\lambda$, then~$\tau \mapsto Y\lambda\circ\tau_{H_\Ceu}$ defines a bijection
  \[ \varphi_{\Eeu,X,Y}\colon \Nat(X,Y\circ\Lbb F) \cong \Nat(X\circ H_\Ceu, Y\circ H_\Deu\circ F) \]
  natural in $\Eeu$,~$X\colon \Ho\Ceu \to \Eeu$ and~$Y\colon \Ho\Deu \to \Eeu$. Conversely, if there is such a natural family of bijections, then~$\Lbb F$ is an absolute total left derived functor of\/~$F$ with counit $\varphi_{\Ho\Deu,\Lbb F,\id_{\Ho\Deu}} \id_{\Lbb F}$. These assignments are mutually inverse. Dually, if we have~$G\colon \Deu \to \Ceu$ and~$\Rbb G\colon \Ho\Deu \to \Ho\Ceu$ such that\/~$\Rbb G$ is an absolute total right derived functor of\/~$G$ with unit~$\rho$, then~$\tau \mapsto \tau_{H_\Deu}\circ E'\rho$ defines a bijection
  \[ \psi_{\Eeu',E',Y}\colon \Nat(E'\circ\Rbb G, Y) \cong \Nat(E'\circ H_\Ceu\circ G, Y\circ H_\Deu) \]
  natural in $\Eeu'$, $E'\colon \Ho\Ceu \to \Eeu'$,~$Y\colon \Ho\Deu \to\Eeu'$ and conversely given such a natural family of bijections, $\Rbb G$ is an absolute total right derived functor of\/~$G$ with unit~$\psi_{\Ho\Ceu,\id_{\Ho\Ceu},\Rbb G}\id_{\Rbb G}$. Again, these assignments are mutually inverse. \eop
\end{corollary}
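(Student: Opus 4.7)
The plan is to observe that this corollary is essentially a direct specialisation of the preceding proposition (the external characterisation of absolute Kan extensions) to the situation where the target category of the functor under consideration is itself a localisation. No new ideas are needed; the task is to unwind the definitions cleanly in both the left- and right-derived cases, and to justify why the naturality in $Y$ that was automatic in the previous statement remains automatic here.

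First I would recall that, by definition, the total left derived functor $\Lbb F$ with counit $\lambda$ is a left derived functor, i.e.\ a right Kan extension, of the composite $H_\Deu\circ F\colon \Ceu \to \Ho\Deu$ along the localisation $H_\Ceu\colon \Ceu \to \Ho\Ceu$. Saying that $\Lbb F$ is \emph{absolute} as a total left derived functor means exactly that this right Kan extension is absolute. I would therefore apply the previous proposition verbatim, with the roles played by $F$ and $H$ there filled by $H_\Deu\circ F$ and $H_\Ceu$ respectively, and with the arbitrary target functor $Y$ ranging over $Y\colon \Ho\Deu \to \Eeu$. The bijection
\[ \varphi_{\Eeu,X,Y}\colon \Nat(X,Y\circ\Lbb F) \to \Nat(X\circ H_\Ceu,\,Y\circ H_\Deu\circ F),\qquad \tau \mapsto Y\lambda\circ\tau_{H_\Ceu}, \]
natural in $\Eeu$ and in $X\colon \Ho\Ceu \to \Eeu$, is then obtained for free, as is the converse direction producing the counit $\varphi_{\Ho\Deu,\Lbb F,\id_{\Ho\Deu}}\id_{\Lbb F}$ from a given natural family of bijections, and the fact that the two constructions are mutually inverse.

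For the dual statement about $\Rbb G$, I would simply dualise: a total right derived functor is a left Kan extension of $H_\Ceu\circ G$ along $H_\Deu$, and being absolute means all composites $E'\circ \Rbb G$ remain left Kan extensions. The co-dual version of the previous proposition (which the author hints at by the phrase ``dually for left Kan extensions'') then furnishes bijections $\Nat(E'\circ\Rbb G, Y) \cong \Nat(E'\circ H_\Ceu\circ G, Y\circ H_\Deu)$, given by $\tau \mapsto \tau_{H_\Deu}\circ E'\rho$, with the appropriate inverse construction and naturality in $E'$ and $\Eeu'$.

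The only subtlety worth flagging is the parenthetical remark that naturality in $Y$ is automatic: once one has naturality in the target category $\Eeu$ and in the functor out of $\Ho\Ceu$ (resp.\ $\Ho\Deu$), a 2-cell $\beta\colon Y \Rightarrow Y'$ in $\Fun(\Ho\Deu,\Eeu)$ can be repackaged as a functor into the arrow category $\Eeu^{[1]}$, and naturality in this new target recovers naturality in $Y$; this is the same twist-category trick used in the proof of Proposition~\ref{prop:strict localisation is weak}. I would mention this briefly and otherwise leave the routine diagram chase to the reader, since the heavy lifting has already been done in the preceding proposition. No genuine obstacle should arise; the whole corollary is essentially a book-keeping exercise translating the general Kan-extension statement into the vocabulary of total derived functors and their (co)units.
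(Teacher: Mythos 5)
Your proposal is correct and matches the paper's intent exactly: the corollary is stated with no proof (the $\Box$ at the end signals that it follows immediately by substituting $H_\Deu\circ F$ for $F$ and $H_\Ceu$ for $H$ in the preceding proposition on absolute Kan extensions, and dually). The side remark about recovering naturality in $Y$ via a twist-category argument is harmless but unnecessary, since that automaticity was already part of the preceding proposition's statement.
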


\section{Construction of Derived Functors}\label{sec:construction}
Virtually all derived functors that occur ``in nature'' are constructed by means of resolutions (a.k.a.~replacements, a.k.a.~approximations) and we shall quickly abstract these constructions to our context. Our treatment has been greatly inspired by \cite{DHKS} and our explicit goal is to remove a functoriality condition, which is present in {\it op.~cit}. For this, we fix some category~$\Ceu$ equipped with a class of weak equivalences and write~$H\colon \Ceu \to \Ho\Ceu$ for its localisation.

\begin{convention}
  If not stated otherwise, we will always equip a subcategory~$\Ceu_0 \subseteq \Ceu$ with the weak equivalences coming from~$\Ceu$. I.e.\ an arrow in~$\Ceu_0$ is a weak equivalence iff it is one in~$\Ceu$.
\end{convention}

\begin{definition}
  Following the nomenclature in \cite{DHKS} (and weakening their notion) a {\it left deformation retract\/}\index{Left!deformation retract}\index{Deformation!retract}\index{Retract!left deformation} of~$\Ceu$ is a full subcategory~$\Ceu_0 \hookrightarrow \Ceu$ with localisation~$H_0\colon \Ceu_0 \to \Ho\Ceu_0$ such that there exist
  \begin{enumerate}
    \item a map~$Q\colon \mathop{\rm Ob}\Ceu \to \mathop{\rm Ob}{\Ceu_0}$ and for every~$f\colon C \to C'$ in~$\Ceu$ a~$Qf\colon QC \to QC'$ in~$\Ceu_0$ such that~$H_0Q$ defines a functor~$H_0Q\colon \Ceu \to \Ho\Ceu_0$ that sends weak equivalences to isomorphisms and thus induces a unique~$\widetilde Q\colon \Ho\Ceu \to \Ho\Ceu_0$ such that~$\widetilde Q\circ H = H_0Q$;
    \item for each $C \in \Ob\Ceu$ a weak equivalence $q_C\colon QC \to C$ such that for every~$f\colon C \to C'$
      \[ \vcenter{\xymatrix{ QC \ar[d]_{q_C} \ar[r]^{Qf} & QC' \ar[d]^{q_{C'}} \\ C \ar[r]_{f} & C' }} \qquad\text{commutes in~$\Ceu$.} \]
  \end{enumerate}
  The triple~$(\Ceu_0,Q,q)$ is then called a {\it left deformation retraction\/}\index{Left!deformation retraction}\index{Deformation!retraction}\index{Retraction!left deformation} (of~$\Ceu$ to~$\Ceu_0$). Dually, one defines a {\it right deformation retract\/}\index{Right!deformation retract}\index{Retract!right deformation}\index{Right!deformation retraction}\index{Retraction!right deformation}.
\end{definition}

\begin{observation}
  If the weak equivalences in~$\Ceu$ satisfy 2\hyph out\hyph of\hyph 3, then the requirement that~$H_0Q$ send weak equivalences to isomorphisms is superfluous. Indeed, for~$f\colon C \to C'$ in~$\Ceu$ we have $q_C'\circ Qf = f \circ q_C$ and so~$f$ is a weak equivalence iff~$Qf$ is.
\end{observation}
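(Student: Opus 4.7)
The plan is to exploit the commutative square coming from axiom (2) in the definition of a left deformation retraction. For an arbitrary arrow $f\colon C \to C'$ in $\Ceu$, this square records the identity $q_{C'} \circ Qf = f \circ q_C$, and both vertical sides $q_C$ and $q_{C'}$ are already weak equivalences by hypothesis. The whole strategy is to deduce that $f$ is a weak equivalence if and only if $Qf$ is, by applying 2-out-of-3 twice in each direction; the required functoriality requirement on $H_0 Q$ then takes care of itself.

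In the forward direction I would first note that if $f$ is a weak equivalence, then so is the composite $f \circ q_C$: apply 2-out-of-3 to the pair $(q_C, f)$, taking $q_C$ and $f$ as the two known weak equivalences. Rewriting this composite as $q_{C'} \circ Qf$ and then applying 2-out-of-3 to the pair $(Qf, q_{C'})$, using that $q_{C'}$ is a weak equivalence and that $q_{C'} \circ Qf$ has just been shown to be one, forces $Qf$ to be a weak equivalence. The converse direction is entirely symmetric: from $Qf$ a weak equivalence one concludes that $q_{C'} \circ Qf = f \circ q_C$ is a weak equivalence, whence 2-out-of-3, together with $q_C$ a weak equivalence, compels $f$ to be one as well.

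Once the biconditional is in hand, the observation follows at once. By the convention immediately preceding the statement, $\Ceu_0$ inherits its class of weak equivalences from $\Ceu$, so an arrow of $\Ceu_0$ that is a weak equivalence in $\Ceu$ is automatically a weak equivalence in $\Ceu_0$. Hence whenever $f$ is a weak equivalence in $\Ceu$, $Qf$ is a weak equivalence in $\Ceu_0$, and the universal property of the localisation $H_0$ ensures that $H_0 Qf$ is an isomorphism in $\Ho\Ceu_0$. There is really no main obstacle; the only technical point worth stressing is that 2-out-of-3 must be invoked twice (once on each triangle of the square) rather than in a single step, and that the subcategory inheritance convention is what lets us transfer weak equivalence status from $\Ceu$ into $\Ceu_0$ without further ado.
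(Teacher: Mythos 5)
Your proof is correct and follows exactly the argument the paper indicates: use the commutative square $q_{C'}\circ Qf = f\circ q_C$ together with two applications of 2-out-of-3 to get the biconditional, then invoke the inheritance convention so that $Qf$ a weak equivalence in $\Ceu_0$ implies $H_0 Qf$ is an isomorphism. The paper leaves the double application of 2-out-of-3 implicit; you have simply spelled it out.
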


  As already mentioned, our notion of a deformation retract is weaker than the one in \cite{DHKS}. There, a left deformation retract is defined as a full subcategory~$I\colon \Ceu_0 \hookrightarrow\Ceu$ together with a homotopical functor~$Q\colon \Ceu \to \Ceu_0$ and a natural weak equivalence~$q\colon I\circ Q \rar \id_\Ceu$. One reason why  one would like to have this stronger condition is the following.
\begin{remark}\label{rem:functorial deformations}
  If~$Q$ in the above definition is a functor $\Ceu \to \Ceu_0$, we can lift the deformation retract to diagram categories. To wit, if $\Ceu_0 \subseteq \Ceu$ is a left deformation retract as above, with $Q\colon \Ceu \to \Ceu_0$ a functor and~$\Ieu$ is a small index category, then $\Ceu_0^{\ \Ieu} \subseteq \Ceu^\Ieu$ is a left deformation retract by applying~$Q$ pointwise (though this is not enough to obtain homotopy colimits). This is not possible if~$Q$ only becomes a functor when passing to the homotopy category, since $\Ho(\Ceu^\Ieu) \not\cong \Ho(\Ceu)^\Ieu$ in general.
\end{remark}

\begin{example}\label{ex:cofibrant objects deformation retract}
  For~$\Meu$ a model category (not necessarily with functorial factorisations), the category of cofibrant objects~$\Meu_c$ together with some chosen cofibrant replacements~$q_C\colon QC \to C$ (i.e.~$QC$ is cofibrant and~$q_C$ an acyclic fibration) and chosen lifts~$Qf\colon QC \to QC'$ for~$f\colon C \to C'$ forms a left deformation retraction. Indeed, as shown in~\cite[Lemma 5.1]{DwyerSpalinski}, if $f\colon C \to C'$ is any morphism in~$\Meu$ and $q_C\colon QC \to C$, $q_{C'}\colon QC' \to C'$ are fixed cofibrant replacements, a dotted morphism making the square
  \[ \xymatrix{ QC \ar@{->>}[d]_{q_C}^{\sim} \ar@{..>}[r] & QC' \ar@{->>}[d]^{q_{C'}}_{\sim} \\ C \ar[r]_{f} & C' } \]
  commute in~$\Meu$ always exists (i.e.~we can choose a~$Qf$ as required) and is unique up to (left) homotopy. In particular, all such lifts are mapped to the same morphism under~$H_0\colon \Meu_c \to \Ho\Meu_c$ (cf.~\cite[Lemma 5.10]{DwyerSpalinski}, whose proof goes through even though~$\Meu_c$ need not be a model category) and thus~$H_0Q$ is a functor.
  
  If~$\Meu$ has functorial factorisations, then we even obtain the stronger version of a left deformation retraction, where $Q\colon \Meu \to \Meu_c$ is actually a functor.
\end{example}

\begin{example}\label{ex:deformation retract for left model approximations}
  More generally, if $L\colon \Meu \rightleftarrows \Ceu\lon R$ is a {\it left model approximation\/} in the sense of \cite{HToD} (left adjoint on the left and again with~$\Meu$ not necessarily having functorial factorisations), we let $\Ceu_0 \subseteq \Ceu$ be the full subcategory comprising all (objects isomorphic to) images of cofibrant objects in~$\Meu$ under~$L$. Choosing a cofibrant replacement~$Q$ for~$\Meu$ as in the last example
  \[ (C \xto{f} C') \quad\mapsto\quad (LQRC\xto{LQRf}LQRC') \quad\text{together with the}\quad LQRC \xto{q_{RC}^\flat} C \]
  defines a left deformation retraction of~$\Ceu$ to~$\Ceu_0$. Indeed, the $q_{RC}^\flat$ are weak equivalences because their adjuncts are and by definition of a left model approximation. Moreover, our cofibrant replacement is functorial when passing to the homotopy category because~$H_0LQR$ is just
  \[ \Ceu \xto{R} \Meu \xto{H_{\Meu}} \Ho\Meu \xto{\widetilde Q} \Ho\Meu_c \xto{\Ho L} \Ho\Ceu_0 \]
  (where for the last functor, we used that~$L$ sends weak equivalences between cofibrant objects to weak equivalences). Finally, the ``naturality squares'' for the $q_{RC}^\flat\colon LQRC\to C$ commute because their adjunct squares do.
\end{example}

  Although it is usually convenient to require that~$Q$ itself be a functor, this has the undesirable consequence of needing to have functorial factorisations on a model category for the theory to apply. For the construction of derived functors, it is much more important that~$Q$ become functorial when passing to the homotopy category.
\begin{proposition}
 Let~$(\Ceu_0,Q,q)$ be a left deformation retraction of~$\Ceu$ and $I\colon \Ceu_0 \hookrightarrow \Ceu$ the inclusion. Then the families~$(Hq_C)_{C \in \Ob\Ceu}$ and~$(H_0q_{C_0})_{C_0\in\Ob{\Ceu_0}}$ define natural isomorphisms%
  \[ \Ho I\circ\widetilde Q \cong \id_{\Ho\Ceu} \qquad\text{and}\qquad \widetilde Q\circ\Ho I \cong \id_{\Ho\Ceu_0}. \]
\end{proposition}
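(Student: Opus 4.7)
The plan is to exploit the universal property of strict localisations given by Proposition~\ref{prop:strict localisation is weak}: since precomposition with~$H$ (respectively~$H_0$) is fully faithful on functor categories, it is enough to exhibit the purported natural isomorphisms after whiskering with the localisation functor, where every map in sight is the image of an honest arrow in $\Ceu$ (respectively~$\Ceu_0$).

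First I would show that $(Hq_C)_{C \in \Ob\Ceu}$ defines a natural transformation between the functors $\Ho I\circ H_0 Q$ and $H$ from $\Ceu$ to $\Ho\Ceu$. For this, I would apply $H$ to the commutative square $q_{C'}\circ Qf = f\circ q_C$ provided by the definition of a left deformation retraction; the identity $\Ho I\circ H_0 = H\circ I$ (which characterises~$\Ho I$ by the universal property of~$H_0$) lets me rewrite~$H\circ I Q$ as~$\Ho I\circ H_0 Q$. Since $\widetilde Q\circ H = H_0 Q$, this natural transformation has source $(\Ho I\circ\widetilde Q)\circ H$ and target $\id_{\Ho\Ceu}\circ H$, so Proposition~\ref{prop:strict localisation is weak} produces a unique natural transformation $\bar\tau\colon \Ho I\circ\widetilde Q\Rightarrow \id_{\Ho\Ceu}$ with $\bar\tau_H = (Hq_C)_C$. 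Each $Hq_C$ is invertible because $q_C$ is a weak equivalence, and since every object of $\Ho\Ceu$ is of the form~$HC$ (the standard strict localisation being the identity on objects), every component of $\bar\tau$ is an isomorphism.

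The second isomorphism is symmetric: the family $(H_0 q_{C_0})_{C_0\in\Ob\Ceu_0}$ consists of isomorphisms in $\Ho\Ceu_0$ and, by applying $H_0$ to the same defining square restricted to arrows of~$\Ceu_0$, yields a natural transformation between $H_0 Q\circ I = (\widetilde Q\circ \Ho I)\circ H_0$ and $\id_{\Ho\Ceu_0}\circ H_0 = H_0$. Proposition~\ref{prop:strict localisation is weak} applied now to $H_0$ lifts it uniquely to the desired $\bar\sigma\colon \widetilde Q\circ\Ho I\Rightarrow\id_{\Ho\Ceu_0}$, again pointwise an isomorphism. The only real bookkeeping obstacle throughout is keeping track of the whiskering identities $\widetilde Q\circ H = H_0 Q$, $\Ho I\circ H_0 = H\circ I$, and their consequences, which are needed to identify the correct pair of functors being compared; once those are in hand, naturality reduces to a single commutative square in the respective base category and invertibility is immediate from the localising property.
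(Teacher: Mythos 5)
Your proposal is correct and follows essentially the same route as the paper: reduce via Proposition~\ref{prop:strict localisation is weak} to checking a natural isomorphism after whiskering with~$H$ (resp.~$H_0$), then read off naturality directly from the defining square $q_{C'}\circ Qf = f\circ q_C$ in~$\Ceu$. One small stylistic point: to conclude that every component of the lifted~$\bar\tau$ is invertible, you appeal to the explicit zig-zag construction of the localisation being identity-on-objects; the cleaner and construction-free argument, implicit in the paper, is simply that the precomposition functor $H^*\colon \Deu^{\Ho\Ceu}\to\Deu^{\Ceu}$, being fully faithful, reflects isomorphisms, so $\bar\tau_H$ invertible forces $\bar\tau$ invertible.
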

\begin{proof}
  By~\ref{prop:strict localisation is weak}, it suffices to check that the two families define natural isomorphisms
  \[ \Ho I\circ\widetilde Q\circ H = \Ho I\circ H_0Q \xRightarrow{\cong\;} H \qquad\text{and}\qquad \widetilde Q\circ\Ho I\circ H_0 = H_0Q\circ I \xRightarrow{\cong\;} H_0, \]
  which is simple because we already have ``naturality squares'' in~$\Ceu$. For example, the naturality of the first family corresponds to the commutativity of
  \[ \xymatrix@C=2em{ C \ar[d]_{f} & (\Ho I)\widetilde QHC \ar@{=}[r] \ar[d]_{(\Ho I)\widetilde QHf} & (\Ho I)H_0QC \ar@{=}[r] \ar[d]_{(\Ho I)H_0Qf} & HIQC \ar@{=}[r] \ar[d]_{HIQf} & HQC \ar[r]^-{Hq_C} \ar[d]_{HQf} & HC \ar[d]^{Hf} \\
  C' & (\Ho I)\widetilde QHC' \ar@{=}[r] & (\Ho I)H_0QC' \ar@{=}[r] & HIQC' \ar@{=}[r] & HQC' \ar[r]_-{Hq_{C'}} & HC' } \]
  for all $f\colon C \to C'$ in~$\Ceu$.
\end{proof}

\begin{remark}
  Because~$\Ho I$ has now been shown to be fully faithful (in fact, it is even an equivalence), we will usually view~$\Ho\Ceu_0$ as a full subcategory of~$\Ho\Ceu$ and under this identification~$H_0 = H\vert_{\Ceu_0}$.
\end{remark}

As already mentioned at the beginning of this section, the importance of deformation retracts comes from the fact that we can use them to ``deform'' functors in order to obtain derived ones.
\begin{definition}
  Let $H_\Ceu\colon \Ceu \to \Ho\Ceu$ and~$H_\Deu\colon \Deu \to \Ho\Deu$ be two localisation functors and~$F\colon \Ceu \to \Deu$. A left deformation retract~$\Ceu_0 \subseteq \Ceu$ is called a {\it left $F$\hyph deformation retract\/}\index{Retract!left deformation}\index{Deformation!retract}\index{Left!deformation retract} iff the restriction~$F\vert_{\Ceu_0}\colon \Ceu_0 \to \Deu$ is homotopical. Consequently, a left deformation retraction~$(\Ceu_0,Q,q)$ is called a {\it left $F$\hyph deformation retraction\/}\index{Retraction!left deformation}\index{Deformation!retraction}\index{Left!deformation retraction} iff~$\Ceu_0$ is a left $F$\hyph deformation retract. Dually for right deformation retracts\index{Retract!left deformation}\index{Left!deformation retract}\index{Retraction!right deformation}\index{Right!deformation retraction}.
\end{definition}

\begin{remark}\label{rem:functorial F-deformations}
  Given $F\colon \Ceu \to \Deu$ together with a small category~$\Ieu$, we always get an induced functor $F^\Ieu\colon \Ceu^\Ieu \to \Deu^\Ieu$ by applying~$F$ pointwise. Just as in \ref{rem:functorial deformations}, any left $F$\hyph deformation retraction~$(\Ceu_0,Q,q)$ where~$Q$ is actually a functor~$\Ceu \to \Ceu_0$ can be lifted to a left $F^\Ieu$\hyph deformation retraction by applying~$Q$ pointwise.
\end{remark}

\begin{theorem}
  Let~$(\Ceu_0,Q,q)$ be a left deformation retraction of\/~$\Ceu$ and consider a functor~$F\colon \Ceu \to \Deu$ that maps weak equivalences in~$\Ceu_0$ to isomorphisms. Then
  \[ LF\colon \Ho\Ceu \xrightarrow{\widetilde Q} \Ho\Ceu_0 \xrightarrow{\ol{F\vert_{\Ceu_0}}} \Deu \quad\text{together with}\quad (Fq_C\colon FQC \to FC)_{C \in \Ob\Ceu} \]
  is an absolute left derived functor of~$F$. In particular, if\/~$\Deu$ is also equipped with a class of weak equivalences and~$(\Ceu_0,Q,q)$ is a left $F$\hyph deformation retraction of~$\Ceu$, then
  \[ \Lbb F\colon \Ho\Ceu \xrightarrow{\widetilde Q} \Ho\Ceu_0 \xrightarrow{\Ho(F\vert_{\Ceu_0})} \Ho\Deu \quad\text{together with}\quad (H_\Deu Fq_C\colon FQC \to FC)_{C\in\Ob\Ceu} \]
  is an absolute total left derived functor of~$F$.
\end{theorem}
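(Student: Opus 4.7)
The plan is to factor the proof through a key lemma and a transport step. The lemma is: if $G\colon \Ceu \to \Deu$ sends a class $\Wcal$ to isomorphisms, then its strict extension $\ol G\colon \Ho\Ceu \to \Deu$ along the localisation $H$ is already an \emph{absolute} right Kan extension of $G$ along $H$, with identity counit. This follows immediately from Proposition~\ref{prop:strict localisation is weak}: for any $Y\colon \Deu \to \Eeu$, $X\colon \Ho\Ceu \to \Eeu$ and $\tau\colon X \circ H \rar (Y \circ \ol G) \circ H$, the full faithfulness of $H^*\colon \Eeu^{\Ho\Ceu} \to \Eeu^\Ceu$ yields a unique $\ol\tau\colon X \rar Y \circ \ol G$ with $\ol\tau_H = \tau$. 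Applied to $G = F|_{\Ceu_0}$ and $H = H_0$, this shows that $(\ol{F|_{\Ceu_0}}, \id)$ is an absolute right Kan extension of $F|_{\Ceu_0}$ along $H_0$.

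Next I would check that the proposed counit $\lambda_C = Fq_C$ is natural: since $\widetilde Q\circ H = H_0 Q$ and $H_0 = H|_{\Ceu_0}$, the composite $LF\circ H$ takes $f\colon C\to C'$ to $F(Qf)$ for a chosen lift, so the naturality square for $\lambda$ is just the image under $F$ of the commuting square of the deformation retraction. Then, given arbitrary $Y\colon \Deu \to \Eeu$, $X\colon \Ho\Ceu \to \Eeu$ and $\sigma\colon X \circ H \rar Y \circ F$, I would whisker with $I$ to obtain $\sigma_I\colon (X \circ \Ho I) \circ H_0 \rar Y \circ F|_{\Ceu_0}$, apply the key lemma to factor it uniquely as $\mu_{H_0}$ for some $\mu\colon X \circ \Ho I \rar Y \circ \ol{F|_{\Ceu_0}}$, and define
\[ \ol\sigma \defas \mu_{\widetilde Q} \circ \bigl(X(Hq)\bigr)^{-1}\colon X \rar Y \circ LF, \]
using the natural isomorphism $Hq\colon \Ho I \circ \widetilde Q \cong \id_{\Ho\Ceu}$. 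Verification of the identity $Y\lambda \circ \ol\sigma_H = \sigma$ reduces, on components, to naturality of $\sigma$ applied to $q_C\colon QC \to C$: we get $YFq_C\circ \sigma_{QC}\circ (X(Hq_C))^{-1} = \sigma_C\circ X(Hq_C)\circ (X(Hq_C))^{-1} = \sigma_C$.

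The ``in particular'' claim reduces to the first statement by applying it to $H_\Deu \circ F$ and noting that the unique extension of $H_\Deu \circ F|_{\Ceu_0}$ along $H_0$ is precisely $\Ho(F|_{\Ceu_0})$ by functoriality of $\Ho$. I expect the main obstacle to be uniqueness of $\ol\sigma$, which one obtains by reversing the construction: any candidate $\ol\sigma'$ satisfying the defining equation yields, after composing with $X(Hq)$, a transformation whose $H_0$-whiskering equals $\sigma_I$, and the uniqueness clause of the key lemma then forces this transformation to coincide with $\mu$, hence $\ol\sigma' = \ol\sigma$. Apart from this bookkeeping around the equivalence $\widetilde Q$, the conceptual content is entirely packaged in Proposition~\ref{prop:strict localisation is weak}.
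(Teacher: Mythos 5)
Your proof is correct, and it takes a genuinely different route from the paper's. You isolate a key lemma — that for a functor $G$ inverting $\Wcal$, the strict extension $\ol G$ along $H$ is already an \emph{absolute} right Kan extension with identity counit, which drops out directly from the full faithfulness of $H^*$ in Proposition~\ref{prop:strict localisation is weak} — and then transport the universal property along the equivalence $(\Ho I,\widetilde Q)$, using the natural isomorphism $Hq\colon \Ho I\circ\widetilde Q\cong\id$. The paper instead argues entirely by components: it first shows that if a factorisation $\ol\tau$ exists its components are forced to be $\ol\tau_{HC}=FQq_C\circ(Fq_{QC})^{-1}\circ\tau_{QC}\circ(GHq_C)^{-1}$, then checks directly that this formula is natural and does the job, with absoluteness obtained at the very end by precomposing with an arbitrary $F'$. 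Your version pushes all the universal-property work into a single reusable lemma and reduces the theorem to transport, which is cleaner conceptually; the price is that the ``bookkeeping around $\widetilde Q$'' in your uniqueness step is not entirely free. Unwinding it, one is led to the identity $H_0Qq_C=H_0q_{QC}$, which follows from the naturality of the isomorphism $\widetilde Q\circ\Ho I\cong\id$ (or equivalently from $q_C\circ Qq_C=q_C\circ q_{QC}$ and invertibility of $Hq_C$) — exactly the subtlety the paper handles explicitly via the naturality square~\ref{eqn:Fq nat} (``$Fq$ nat'') in its verification. A slightly slicker way to close your uniqueness gap is to observe, again from Proposition~\ref{prop:strict localisation is weak}, that $\ol\sigma$ is determined by $\ol\sigma_H$, and $\ol\sigma_H$ is determined on $\Ceu_0$ by invertibility of $Y\lambda_{C_0}=YFq_{C_0}$ and then everywhere by naturality along the isomorphisms $Hq_C$. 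The ``in particular'' reduction and the absoluteness claim are handled correctly.
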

\begin{proof}
  Let us write~$\ol F \defas \ol{F\vert_{\Ceu_0}}\colon \Ho\Ceu_0 \to \Deu$ for the functor induced by~$F\vert_{\Ceu_0}$. We need to check that the~$Fq_C$ are natural in~$C \in \Ob\Ceu$; i.e.~that the square
  \begin{equation}\label{eqn:Fq nat} \vcenter{\hbox{$\xymatrix@C=5.5em{ FQC \ar[r]^-{\ol F\widetilde Q H f} \ar[d]_{Fq_C} & FQC' \ar[d]^{Fq_{C'}} \\ FC \ar[r]_-{Ff} & FC' }$}} \end{equation}
  in~$\Deu$ commutes for all morphisms~$f\colon C \to C'$ in~$\Ceu$. This is clear because we have $\ol F\widetilde Q Hf = \ol F HQf = FQf$. To see that this transformation is universal, let~$G\colon \Ho\Ceu \to \Deu$ and~$\tau\colon G\circ H \rar F$. If there is~$\ol\tau\colon G \rar LF$ such that~$Fq\circ\ol\tau_H = \tau$, then
  \[ GHC \xrightarrow{\ol\tau_{HC}} LFHC = FQC \xrightarrow{Fq_C} FC \quad=\quad GHC \xrightarrow{\tau_C} FC \qquad\text{for all }C \in \Ob\Ceu. \]
  For~$C \in \Ob{\Ceu_0}$, the arrow~$Fq_C$ is invertible and so $\ol\tau_{HC} = (Fq_C)^{-1}\tau_C$. For the general case, the naturality of~$\ol\tau$ gives a commutative diagram in~$\Deu$ as follows:
  \[ \xymatrix{GHQC \ar[d]_{GHq_C} \ar[r]^-{\ol\tau_{HQC}} & LFHQC \ar[d]^{LFHq_C} \ar@{=}[r] & FQQC \ar[d]^{FQq_C} \\ GHC \ar[r]_-{\ol\tau_{HC}} & LFHC \ar@{=}[r] & FQC & *!<1em,1ex>{.}} \]
  But~$GHq_C$ is invertible, so that
  \[ \bar\tau_{HC} = FQq_C \circ \bar\tau_{HQC} \circ (GHq_C)^{-1} = FQq_C \circ (Fq_{QC})^{-1} \circ \tau_{QC} \circ (GHq_C)^{-1}. \]
  We check that this does indeed define a transformation~$\bar\tau_H\colon G\circ H \rar LF\circ H$ (thus determining~$\bar\tau$) by considering the following commutative diagram for~$f\colon C \to C'$ in~$\Ceu$ (where for the last square, we note that while~$Q$ is not necessarily a functor, $FQ = \ol FH_0Q = \ol F\widetilde Q H$ certainly is):
  \[ \xymatrix@C=3em@R=1cm{ GHC \ar[r]^{(GHq_C)^{-1}} \ar[d]_{GHf} & GHQC \ar[r]^-{\tau_{QC}} \ar[d]_{GHQf} & FQC \ar[r]^{(Fq_{QC})^{-1}} \ar[d]_{FQf} & FQQC \ar[r]^-{FQq_C} \ar[d]^{FQQ f} & FQC \ar[d]^{FQf} \\ GHC' \ar[r]_-{(GHq_{C'})^{-1}} & GHQC' \ar[r]_-{\tau_{QC'}} & FQC' \ar[r]_-{(Fq_{QC'})^{-1}} & FQQC' \ar[r]_-{FQq_{C'}} & FQC' & *!<3em,1ex>{.} } \]
  Moreover, $\bar\tau$ does indeed satisfy~$Fq\circ\bar\tau_H = \tau$ because if~$C \in \Ob\Ceu$, then
  \begin{align*}
    Fq_C\circ\bar\tau_{HC}\ \ &=\ \ Fq_C \circ FQq_C \circ (Fq_{QC})^{-1} \circ \tau_{QC} \circ (GHq_C)^{-1} \\
      &\stackrel{\scriptscriptstyle\mathclap{Fq\text{ nat}}}{=}\ \ Fq_C \circ Fq_{QC} \circ (Fq_{QC})^{-1} \circ \tau_{QC} \circ (GHq_C)^{-1} \\
      &=\ \ Fq_C \circ \tau_{QC} \circ (GHq_C)^{-1} \\
      &\stackrel{\scriptscriptstyle\mathclap{\tau\text{ nat}}}{=}\ \ \tau_C \circ GHq_C \circ (GHq_C)^{-1}
  \end{align*}
  (for the second equality, put $C\leadsto QC$, $C' \leadsto C$, $f \leadsto q_C$ in \ref{eqn:Fq nat} above). Finally, for the absoluteness claim, observe that if~$F'\colon \Deu \to \Eeu$ is another functor, then~$F'\circ F$ again maps weak equivalences in~$\Ceu_0$ to isomorphisms.
\end{proof}

\begin{example}
  As already mentioned in \ref{ex:cofibrant objects deformation retract}, the full subcategory of cofibrant objects in a model category~$\Meu$, together with some chosen cofibrant replacements, is a left deformation retract of~$\Meu$. Consequently, a functor~$F\colon \Meu \to \Neu$ that sends weak equivalences between cofibrant objects to isomorphisms has an absolute left derived functor in the above manner. By Ken Brown's lemma, a left Quillen functor~$F\colon \Meu \to \Neu$ between two model categories (which preserves cofibrations and acyclic cofibrants) has an absolute total left derived functor.
\end{example}

\begin{example}
  More generally, in \cite{HToD}, a left model approximation $L\colon \Meu\! \rightleftarrows\! \Ceu\lon R$ is called {\it good\/} for a functor $F\colon \Ceu \to \Deu$ between two categories with weak equivalences iff $F\circ L$ sends weak equivalences between cofibrant objects to weak equivalences. This immediately implies that $\Ceu_0 \subseteq \Ceu$ as in \ref{ex:deformation retract for left model approximations} is a left $F$\hyph deformation retract and so we obtain $\Lbb F$.

  An important instance of this is that if~$\Meu$ is a model category, $F\colon \Ieu \to \Jeu$ a functor between small categories and~$\Neu(\Ieu) \defas \int_\Deltabf N(\Ieu)$ the category of simplices of the nerve of~$\Ieu$, there is the so-called Bousfield-Kan left model approximation
  \[ \Fun^b\bigl(\Neu(\Ieu),\Meu\bigr) \rightleftarrows \Fun(\Ieu,\Meu), \]
  which is good for the functor~$F_\bang$ given by taking the left Kan extension along~$F$. In particular, there is a homotopy left Kan extension functor $\Lbb F_!\colon \Ho(\Meu^\Ieu) \to \Ho(\Meu^\Jeu)$.
\end{example}

\begin{example}
  Consider a functor $F\colon \Ceu \to \Deu$, a small indexing category~$\Ieu$ and~$(\Ceu_0,Q,q)$ a left $F$\hyph deformation retraction such that~$Q\colon \Ceu \to \Ceu_0$ is even a functor. As mentioned in~\ref{rem:functorial F-deformations}, if $F^\Ieu\colon \Ceu^\Ieu \to \Deu^\Ieu$ is postcomposition with~$F$, then $(\Ceu_0^\Ieu,Q^\Ieu,q)$ is a left $F^\Ieu$\hyph deformation retraction (where $q_X\colon Q^\Ieu X = Q\circ X \rar X$ has components $q_{XI}$). In this case $\Lbb F$ is given by $\Ho(F\circ Q)$ and $\Lbb F^\Ieu$ is $\Ho(F^\Ieu\circ Q^\Ieu) = \Ho\bigl((F\circ Q)^\Ieu\bigr)$. More colloquially, the derived functor of~$F^\Jeu$ is just the derived functor of~$F$, applied pointwise.
\end{example}

To finish off this section, let us consider one instance where it is useful to have functorial factorisations in a model category. In that case, there is the following conceptual proof that the derived functors of left Quillen functors preserve homotopy colimits (even if the existence of projective model structures is not assumed). Here, we assume known that homotopy colimits in model categories can always be constructed, as is shown e.g.~in~\cite{HToD}.
\begin{proposition}\label{prop:quillen functors preserve hocolims}
  If $F\colon \Meu \rightleftarrows \Neu\lon G$ is a Quillen adjunction between two model categories with functorial factorisations and\/~$\Ieu$ is any small indexing category, there is a natural isomorphism
  \[ \Lbb F\circ\hocolim_\Ieu \cong {\hocolim_\Ieu}\circ\Lbb F^\Ieu. \]
\end{proposition}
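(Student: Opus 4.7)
The plan is to exhibit $\Lbb F\circ\hocolim_\Ieu$ and $\hocolim_\Ieu\circ\Lbb F^\Ieu$ as left adjoints to a common right adjoint and then invoke uniqueness of left adjoints.

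First I would set up all the relevant derived functors. Because $\Meu$ and $\Neu$ admit functorial factorisations, the category of cofibrant objects in $\Meu$ together with a functorial cofibrant replacement $Q$ is a left $F$\hyph deformation retraction as in Example~\ref{ex:cofibrant objects deformation retract}; by Remark~\ref{rem:functorial F-deformations}, applying $Q$ pointwise gives a left $F^\Ieu$\hyph deformation retraction of $\Meu^\Ieu$. The construction theorem of Section~\ref{sec:construction} then yields absolute total left derived functors $\Lbb F$ and $\Lbb F^\Ieu$; dually, using a functorial fibrant replacement $R$ in $\Neu$, we obtain absolute total right derived functors $\Rbb G$ and $\Rbb G^\Ieu$, the latter admitting the pointwise description $\Rbb G^\Ieu \cong \Ho\bigl((GR)^\Ieu\bigr)$. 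Applying Corollary 6.4 to $F \dashv G$, and again to the pointwise adjunction $F^\Ieu \dashv G^\Ieu$, produces derived adjunctions $\Lbb F \dashv \Rbb G$ and $\Lbb F^\Ieu \dashv \Rbb G^\Ieu$. Combined with the derived adjunctions $\hocolim_\Ieu \dashv \Ho\Delta$ on both $\Ho\Meu$ and $\Ho\Neu$, this gives that $\Lbb F\circ\hocolim_\Ieu$ is left adjoint to $\Ho\Delta\circ\Rbb G$, while $\hocolim_\Ieu\circ\Lbb F^\Ieu$ is left adjoint to $\Rbb G^\Ieu\circ\Ho\Delta$.

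The crux is then to produce a natural isomorphism $\Ho\Delta\circ\Rbb G \cong \Rbb G^\Ieu\circ\Ho\Delta$. Underived, the equality $\Delta\circ G = G^\Ieu\circ\Delta$ is strict, since $G$ applied pointwise to a constant diagram returns the constant diagram on the value. To transport this to the derived level, I would use the explicit formula $\Rbb G^\Ieu \cong \Ho\bigl((GR)^\Ieu\bigr)$: functoriality of $R$ implies that $R$ applied pointwise to a constant diagram $\Delta N$ is again the constant diagram $\Delta(RN)$, whence both $\Rbb G^\Ieu\circ\Ho\Delta$ and $\Ho\Delta\circ\Rbb G$ are represented by $N \mapsto \Delta(GRN)$, naturally in $N$. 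With the two right adjoints so identified, uniqueness of left adjoints up to natural isomorphism yields the desired $\Lbb F\circ\hocolim_\Ieu \cong \hocolim_\Ieu\circ\Lbb F^\Ieu$.

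The main obstacle is this third step. Absoluteness of $\Rbb G$ immediately gives that $\Ho\Delta\circ\Rbb G$ is a total right derived functor of $\Delta\circ G$ (postcomposition by the homotopical functor $\Ho\Delta$), but the analogous claim for $\Rbb G^\Ieu\circ\Ho\Delta$ concerns \emph{pre}composition by $\Delta$, which is not formal. It is precisely here that the assumption of functorial factorisations enters: without them one cannot take functorial pointwise (co)fibrant replacements, so the explicit formula for $\Rbb G^\Ieu$ and the resulting identification of ``replacement of a constant diagram'' with ``constant diagram of the replacement'' both break down.
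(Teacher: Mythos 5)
Your proof is correct and follows essentially the same route as the paper: switch to right adjoints, identify $\Ho\Delta\circ\Rbb G \cong \Rbb G^\Ieu\circ\Ho\Delta$ via the explicit fibrant-replacement description $\Rbb G \cong \Ho(GR)$, $\Rbb G^\Ieu \cong \Ho\bigl((GR)^\Ieu\bigr)$, and conclude by uniqueness of adjoints. The only quibble is the citation: since you already have absolute $\Lbb F$ and $\Rbb G$ in hand, Theorem~\ref{thm:derived functors are adjoint} is the cleaner reference for producing the derived adjunctions than Corollary 6.4, which instead recovers $\Lbb F$ from the existence of a left adjoint to $\Rbb G$.
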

\begin{proof}
  First off, even though~$\Meu^\Ieu$ and~$\Neu^\Ieu$ do not come with model structures, the derived functors~$\Lbb F^\Ieu$ and~$\Rbb G^\Ieu$ of $F^\Ieu\colon \Meu^\Ieu \to \Neu^\Ieu$ and $G^\Ieu\colon \Neu^\Ieu \to \Meu^\Ieu$ can still be constructed, as seen in the previous example. Writing~$\Delta$ for the constant diagram inclusion, to prove the claim, we can switch to adjoints and it suffices to prove that $\Ho\Delta \circ \Rbb G \cong \Rbb G^\Ieu \circ \Ho\Delta$. This is not hard because if~$R\colon \Neu \to \Neu_f$ is a fibrant replacement functor, $\Rbb G \cong \Ho(G\circ R)$ and $\Rbb G^\Ieu \cong \Ho(G^\Ieu\circ R^\Ieu)$.
\end{proof}

\section{The Yoga of Mates}
As is well-known, adjunctions are just a special instance of Kan extensions (then again, what isn't?) and the external characterisation makes it really obvious.
\begin{proposition}\label{prop:adjunctions and absolute Kan extensions}
  Let~$F\colon \Ceu \rightleftarrows \Deu\lon G$ be two functors. Then~$F\dashv G$ with counit~$\varepsilon$ iff~$(F,\varepsilon)$ is an absolute right Kan extension of\/~$\id_\Deu$ along~$G$ and dually, $F \dashv G$ with unit~$\eta$ iff~$(G,\eta)$ is an absolute left Kan extension of\/~$\id_\Ceu$ along~$F$. Indeed, if~$F \dashv G$ with unit~$\eta$ and counit~$\varepsilon$, then~$\tau \mapsto Y\varepsilon\circ\tau_G$ defines a bijection
  \[ \varphi_{\Eeu,X,Y}\colon \Nat(X,Y\circ F) \cong \Nat(X\circ G,Y) \qquad\text{with inverse}\qquad \sigma_F\circ X\eta \mapsfrom \sigma \]
  and this is natural in~$\Eeu$,~$X\colon \Ceu \to \Eeu$ and~$Y\colon \Deu \to \Eeu$. Conversely, given such a natural family of bijections, then~$F \dashv G$ with unit~$\varphi_{\Ceu,\id_\Ceu,G}^{-1}(\id_G)$ and counit~$\varphi_{\Deu,F,\id_\Deu}(\id_F)$. These two constructions are inverse to each other.
\end{proposition}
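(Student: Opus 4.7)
The plan is to deduce the statement directly from the external characterisation of absolute Kan extensions established in the preceding proposition, modulo the correspondence between its natural family of bijections and the triangle identities of an adjunction. The claim about absolute left Kan extensions and units follows from the one about absolute right Kan extensions and counits by reversing 2\hyph cells (applied to $F^{\rm op} \dashv G^{\rm op}$ in the opposite categories), so I will only treat the latter.

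For the forward direction, assume $F \dashv G$ with unit~$\eta$ and counit~$\varepsilon$ and set
\[ \varphi_{\Eeu, X, Y}(\tau) \defas Y\varepsilon \circ \tau_G \qquad\text{and}\qquad \psi_{\Eeu, X, Y}(\sigma) \defas \sigma_F \circ X\eta. \]
A short component-wise computation shows that $\varphi \circ \psi$ and $\psi \circ \varphi$ reduce to $\sigma \circ X(G\varepsilon \circ \eta_G)$ and $Y(\varepsilon_F \circ F\eta) \circ \tau$ respectively---the only non-trivial steps being naturality of~$\sigma$ at the components of~$\varepsilon$ and naturality of~$\tau$ at the components of~$\eta$---after which both triangle identities force these composites to be identities. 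Naturality of~$\varphi$ in $\Eeu$,~$X$ and~$Y$ is immediate from the definition. The external characterisation then upgrades this to the assertion that $(F, \varphi_{\Deu, F, \id_\Deu}(\id_F))$ is an absolute right Kan extension of~$\id_\Deu$ along~$G$, and unwinding the formula gives $\varphi_{\Deu, F, \id_\Deu}(\id_F) = \varepsilon$, as required.

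For the converse, assume $(F, \varepsilon)$ is an absolute right Kan extension of~$\id_\Deu$ along~$G$, so that by the preceding proposition we obtain a natural family of bijections $\varphi_{\Eeu, X, Y}\colon \Nat(X, Y \circ F) \cong \Nat(X \circ G, Y)$ given by $\tau \mapsto Y\varepsilon \circ \tau_G$. Define $\eta \defas \varphi_{\Ceu, \id_\Ceu, G}^{-1}(\id_G)$. Unfolding the equation $\varphi(\eta) = \id_G$ yields at once the first triangle identity $G\varepsilon \circ \eta_G = \id_G$. With~$\eta$ in hand, set $\psi_{\Eeu, X, Y}(\sigma) \defas \sigma_F \circ X\eta$; the same calculation as in the forward direction (now needing only the triangle identity we have just established together with naturality of~$\sigma$) shows $\varphi \circ \psi = \id$. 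As~$\varphi$ is already bijective, this forces $\psi = \varphi^{-1}$ and therefore $\psi \circ \varphi = \id$; specialising the latter at $(\Eeu, X, Y, \tau) = (\Deu, F, \id_\Deu, \id_F)$ delivers the second triangle identity $\varepsilon_F \circ F\eta = \id_F$. Hence $F \dashv G$ with unit~$\eta$ and counit~$\varepsilon$, and the explicit formula for~$\psi$ gives the advertised inverse.

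The claim that the two constructions are mutually inverse is then formal: starting from the adjunction we recover~$\varepsilon$ as $\varphi_{\Deu, F, \id_\Deu}(\id_F)$ and~$\eta$ as $\varphi_{\Ceu, \id_\Ceu, G}^{-1}(\id_G)$ by the same formulas, while starting from a natural family of bijections the recovered $\varphi$ agrees with the original by construction. Essentially the whole argument is bookkeeping on top of the external characterisation; the only place demanding any care is keeping track of whiskerings when evaluating the two round-trip composites, and in particular making sure that the first triangle identity is available \emph{before} invoking it to derive the second.
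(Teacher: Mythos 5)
Your proposal is correct. The forward direction (adjunction gives the natural family) is essentially identical to the paper's: the round-trip computations reduce, via naturality of $\sigma$ (resp.~$\tau$), to whiskering with $G\varepsilon \circ \eta_G$ (resp.~$\varepsilon_F\circ F\eta$), and the triangle identities finish. The real divergence is in the converse. The paper treats an \emph{abstract} natural family $\varphi$ and derives the triangle identity $\varepsilon_F\circ F\eta = \id_F$ by chasing $\eta$ around an explicit $2\times 3$ naturality rectangle built from naturality of $\varphi$ in $\Eeu$ and in $Y$. You instead first invoke the external characterisation to conclude that $\varphi$ must have the explicit form $\tau \mapsto Y\varepsilon\circ\tau_G$ with $\varepsilon := \varphi(\id_F)$, so the first triangle identity $G\varepsilon\circ\eta_G = \id_G$ is immediate from the definition $\eta := \varphi^{-1}(\id_G)$; you then prove $\varphi\circ\psi = \id$ using only that identity, deduce $\psi\circ\varphi = \id$ by bijectivity of~$\varphi$, and read off the second triangle identity by specialising at $(\Deu, F, \id_\Deu, \id_F)$. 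This bijectivity shortcut is a genuine structural difference: it lets one triangle identity do the work of two, at the cost of leaning harder on the external characterisation (in particular on its ``mutually inverse'' clause, which is what justifies replacing an abstract $\varphi$ by the explicit formula). One small caveat: you open the converse by assuming $(F,\varepsilon)$ is an absolute Kan extension rather than assuming an abstract natural family as the statement does; the equivalence is exactly the external characterisation's converse plus its mutual-inverseness, so this is fine, but worth making explicit. Likewise, the final ``mutually inverse'' claim is dispatched as ``formal,'' whereas the paper actually verifies the backward-then-forward composite with another naturality chase; your gloss is covered by the same appeal to the external characterisation, but is thinner than what the paper writes out.
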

\begin{proof}
  The whole proof is simply about finding the correct naturality conditions to apply but let's do it anyway for the sake of completeness. Starting with an adjunction~$(F\dashv G,\eta,\varepsilon)$ and defining~$\varphi_{\Eeu,X,Y}$ as in the claim, the naturality of~$\varphi$ in~$\Eeu$ and~$X$ is immediate. For the naturality in~$Y$, we assume that we have $\alpha\colon Y \rar Y'$ and need to show that
  \[ \xymatrix@C=3em{ \Nat(X,Y\circ F) \ar[r]^{\varphi_{\Eeu,X,Y}} \ar[d]_{(\alpha_F)_*} & \Nat(X\circ G,Y) \ar[d]^{\alpha_*} \\ \Nat(X,Y'\circ F) \ar[r]_{\varphi_{\Eeu,X,Y'}} & \Nat(X\circ G, Y') } \]
  commutes. Chasing some~$\tau$ through the square, this means that
  \[ \alpha\circ Y\varepsilon\circ\tau_G = Y'\varepsilon\circ\alpha_{FG}\circ\tau_G, \]
  which follows from naturality of~$\alpha$. Moreover, the assignments
  \[ \tau \mapsto Y\varepsilon\circ\tau_G \qquad\text{and}\qquad \sigma_F\circ X\eta \mapsfrom \sigma \]
  are indeed inverse to each other, because, starting with~$\tau$, we have
  \[ \tau \longmapsto Y\varepsilon\circ\tau_G \longmapsto (Y\varepsilon\circ\tau_G)_F\circ X\eta \stackrel{\text{\tiny $\tau$ nat}}{=} Y\varepsilon_{FC}\circ YF\eta_C\circ \tau_C \stackrel{\text{\tiny $\Delta$-id}}{=} \tau_C \]
  and similarly the other way around.

  \vspace{.2ex}
  Conversely, starting with a natural family of~$\varphi_{\Eeu,X,Y}$, we put $\eta \defas \varphi^{-1}_{\Ceu,\id_\Ceu,G}(\id_G)$ and $\varepsilon \defas \varphi_{\Deu,F,\id_\Deu}(\id_F)$ as in the claim and need to verify the triangle identities. For example, to show $\varepsilon_F\circ F\eta = \id_F$ (the other one being similar), we just consider the commutative diagram
  \[ \xymatrix@C=3em{ \Nat(\id_\Ceu,G\circ F) \ar[r]^-{F_*} \ar[d]_{\varphi_{\Ceu,\id_\Ceu,G}} & \Nat(F,F\circ G\circ F) \ar[r]^-{(\varepsilon_F)_*} \ar[d]_{\varphi_{\Deu,F,FG}} & \Nat(F,F) \ar[d]_{\varphi_{\Deu,F,\id_\Deu}} \\ \Nat(G,G) \ar[r]_-{F_*} & \Nat(F\circ G,F\circ G) \ar[r]_-{\varepsilon_*} & \Nat(F\circ G,\id_\Deu), } \]
  where the left square commutes by naturality of $\varphi_{\Eeu,X,Y}$ in the variable~$\Eeu$, while the right square commutes by naturality in~$Y$. Chasing~$\eta = \varphi_{\Ceu,\id_\Ceu,G}^{-1}(\id_G)$ around the diagram, we find
  \[ \varphi_{\Deu,F,\id_\Deu}(\varepsilon_F\circ F\eta) = \varepsilon = \varphi_{\Deu,F,\id_\Deu}(\id_F) \]
  and the claim follows. 

  \vspace{.2ex}
  Finally, the two assignments $(\eta,\varepsilon) \mapsto \varphi$ and $(\eta,\varepsilon) \mapsfrom \varphi$ as in the claim are mutually inverse, because, starting from~$(\eta,\varepsilon)$, constructing $\varphi_{\Eeu,X,Y}\colon \tau \mapsto Y\varepsilon\circ\tau_G$ and taking the associated unit and counit, the new counit is
  \[ \varphi_{\Deu,F,\id_\Deu}(\id_F) = \id_\Deu\varepsilon\circ(\id_F)_G = \varepsilon, \]
  which also shows that the new unit is again~$\eta$ because the counit determines the unit and vice versa. Conversely, starting with~$\varphi$, and taking the associated unit $\eta \defas \varphi^{-1}_{\Ceu,\id_\Ceu,G}(\id_G)$ and counit $\varepsilon \defas \varphi_{\Deu,F,\id_\Deu}(\id_F)$, the associated natural family~$\varphi'$ is defined as
  \[ \varphi'_{\Eeu,X,Y}\colon \tau \mapsto Y\varepsilon\circ\tau_G = Y\varphi_{\Deu,F,\id_\Deu}(\id_F)\circ\tau_G. \]
  Now, $\tau$ is natural transformation $X \rar Y\circ F$ and by naturality of~$\varphi_{\Eeu,X,Y}$ in the variable~$X$, the square
  \[ \xymatrix@C=4em{ \Nat(Y\circ F,Y\circ F) \ar[r]^{\varphi_{\Eeu,Y\circ F,Y}} \ar[d]_{\tau^*} & \Nat(Y\circ F\circ G,Y) \ar[d]^{(\tau_G)^*} \\ \Nat(X,Y\circ F) \ar[r]_{\varphi_{\Eeu,X,Y}} & \Nat(X\circ G,Y) } \]
  commutes. Chasing~$\id_{Y\circ F}$ through the square, we find that
  \[ \varphi'_{\Eeu,X,Y}(\tau) = \varphi_{\Eeu,X,Y}(\tau\circ\id_{Y\circ F}) = \varphi_{\Eeu,X,Y}(\tau). \]
\end{proof}

As always, by dualising (really {\it op\/}-dualising, i.e.~inverting 1\hyph arrows), we get an alternative external characterisation of adjunctions, which exhibits adjoints as {\it Kan lifts\/}\index{Kan lift} and also fixes the counter-intuitive aspect that in the above formula, the left adjoint appears on the right. 
\begin{proposition}
  Given two functors~$F\colon \Ceu \rightleftarrows \Deu\lon G$ we have~$F \dashv G$ with unit~$\eta$ and counit~$\varepsilon$ iff~$\sigma \mapsto G\sigma\circ\eta_X = \sigma^\sharp$ defines a bijection
\[ \Nat(F\circ X,Y) \cong \Nat(X,G\circ Y) \qquad\text{with inverse}\qquad \tau^\flat = \varepsilon_Y\circ F\tau \mapsfrom \tau, \]
which is natural in $\Eeu$,~$X\colon \Eeu \to \Ceu$ and~$Y\colon \Eeu \to \Deu$. Conversely, given such a natural family of bijections, then~$F \dashv G$ with unit~$\psi_{\Ceu,\id_\Ceu,F}(\id_F)$ and counit~$\psi_{\Deu,G,\id_\Deu}^{-1}(\id_G)$. These two constructions are inverse to each other.
\end{proposition}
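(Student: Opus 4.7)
The plan is to mirror, step by step, the proof of the preceding proposition, exchanging the roles played by naturality in the variable~$X$ and naturality in~$Y$ (since here it is~$F$ that appears precomposed on the left of~$X$, instead of~$Y\circ F$ appearing on the right, as in the earlier bijection).

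First, starting from an adjunction $(F\dashv G,\eta,\varepsilon)$, I define $\psi_{\Eeu,X,Y}(\sigma)\defas G\sigma\circ\eta_X$ and verify the three naturalities. Naturality in~$\Eeu$ (reindexing along some $K\colon\Eeu'\to\Eeu$) and naturality in~$Y$ are immediate from the functoriality of~$G$. Naturality in~$X$, i.e.\ for any $\alpha\colon X'\rar X$ the commutativity of the square with vertical maps $(F\alpha)^*$ and $\alpha^*$, reduces after chasing an arbitrary~$\sigma$ to the identity $GF\alpha\circ\eta_{X'}=\eta_X\circ\alpha$, which is precisely naturality of~$\eta$ at~$\alpha$. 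The fact that $\tau\mapsto\varepsilon_Y\circ F\tau$ is a two\hyph sided inverse is then a one\hyph line computation using the naturality of~$\varepsilon$ (resp.~of~$\eta$) together with the triangle identities $\varepsilon_F\circ F\eta=\id_F$ and $G\varepsilon\circ\eta_G=\id_G$.

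Conversely, given a natural family~$\psi$, I put $\eta\defas\psi_{\Ceu,\id_\Ceu,F}(\id_F)$ and $\varepsilon\defas\psi^{-1}_{\Deu,G,\id_\Deu}(\id_G)$ and verify the two triangle identities. To obtain $G\varepsilon\circ\eta_G=\id_G$, I chase $\varepsilon\in\Nat(F\circ G,\id_\Deu)$ through a commutative diagram assembled from the naturality square of~$\psi$ in the variable~$\Eeu$ (reindexing along $G\colon\Deu\to\Ceu$) and the naturality square in~$X$; this directly yields the claimed identity from $\psi(\varepsilon)=\id_G$. The other triangle identity is dual. That the two assignments are mutually inverse follows, as in the preceding proof, by noting that starting from $(\eta,\varepsilon)$ and reading off the associated unit and counit of the induced $\psi$ returns~$\eta$ and~$\varepsilon$ on the nose, while starting from an abstract~$\psi$ and forming the induced unit and counit, the associated natural family~$\psi'$ coincides with~$\psi$ upon chasing $\id_{F\circ X}$ around the naturality square of~$\psi$ in the variable~$X$.

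The proof is essentially mechanical; there is no substantive new content beyond the previous proposition. The only point requiring care is bookkeeping: the bijection here has the opposite variance pattern in~$X$ and~$Y$ compared to the preceding one, so naturality in~$X$ must be invoked in places where the previous argument invoked naturality in~$Y$, and conversely.
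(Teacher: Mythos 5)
The paper does not actually give a direct proof of this proposition; it simply remarks that it follows ``by dualising (really op-dualising, i.e.~inverting 1-arrows)'' from the preceding proposition. Your direct, step-by-step verification is therefore a perfectly legitimate alternative, and it is in fact what one would write if one wanted to unwind that duality. The forward direction (an adjunction gives a natural bijection with the stated inverse) is correct as you describe it, and the description of the two mutually inverse assignments is also sound.

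There are, however, two bookkeeping slips in the converse direction. First, your conceptual remark that the bijection ``has the opposite variance pattern in~$X$ and~$Y$'' is not accurate: both $\Nat(X, Y\circ F)\cong\Nat(X\circ G,Y)$ and $\Nat(F\circ X, Y)\cong\Nat(X, G\circ Y)$ are contravariant in~$X$ and covariant in~$Y$; what changes is that $F$ and~$G$ are \emph{pre}composed rather than \emph{post}composed, and that reindexing along a functor~$K$ in the variable~$\Eeu$ now acts by \emph{pre}composition $X\mapsto X\circ K$, $Y\mapsto Y\circ K$ (rather than postcomposition). Second, and more concretely: to obtain $G\varepsilon\circ\eta_G=\id_G$ one reindexes $\psi_{\Ceu,\id_\Ceu,F}$ along $G\colon\Deu\to\Ceu$ (getting $\psi_{\Deu,G,FG}\colon\Nat(FG,FG)\cong\Nat(G,GFG)$) and then uses naturality in~$Y$ along $\varepsilon\colon FG\rar\id_\Deu$, chasing $\id_F$ (equivalently~$\eta$), not~$\varepsilon$; to obtain $\varepsilon_F\circ F\eta=\id_F$ one reindexes along $F\colon\Ceu\to\Deu$ and uses naturality in~$X$ along $\eta\colon\id_\Ceu\rar GF$, chasing $\varepsilon_F$. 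You have these paired the wrong way round. The same slip occurs at the end: to show that $\psi'$ agrees with a given~$\psi$, one reindexes along $X$ in the~$\Eeu$ variable and then uses naturality in~$Y$ along $\sigma$, chasing $\id_{F\circ X}$. None of this affects the validity of the argument once corrected — the diagrams close up either way — but since you singled out the bookkeeping as the only delicate point, it is worth getting it exactly right.
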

Combining the two bijections from these propositions leads to the well-known {\it mating-bijection\/}\index{Bijection!mating}\index{Mating bijection}
\begin{equation}\label{eqn:mating bijection}
  \vcenter{\hbox{$\begin{array}{rcl} \sigma &\mapsto& \sigma^{\mate}\\ \Nat(F'\circ X, Y\circ F) &\cong& \Nat(X\circ G, G'\circ Y)\\ {}^{\mate}\tau &\mapsfrom& \tau \end{array}$}}\end{equation}
\[ \vcenter{\hbox{for categories and functors}}\quad
  \vcenter{\hbox{$\xymatrix{ \Ceu \ar@<1ex>[r]^-{F}^-{}="1" \ar[d]_{X} & \Deu \ar@<1ex>[l]^-{G}^-{}="2" \ar[d]^{Y} \\ \Ceu' \ar@<1ex>[r]^-{F'}^-{}="11" & \ar@<1ex>[l]^-{G'}^-{}="12" \Deu' & *!<2em,1ex>{.} \ar@{}"1";"2"|(.3){\hbox{}}="3"|(.7){\hbox{}}="4" \ar@{|-}"4";"3" \ar@{}"11";"12"|(.3){\hbox{}}="13"|(.7){\hbox{}}="14" \ar@{|-}"14";"13" }$}} \]
This bijection is natural in~$X$ and~$Y$ and is explicitly given by
  \[ \sigma \mapsto G'Y\varepsilon\circ G'\sigma_G\circ\eta'_{XG} = (Y\varepsilon\circ\sigma_G)^\sharp, \qquad (\tau_F\circ X\eta)^\flat = \varepsilon'_{YF}\circ F'\tau_F\circ F'X\eta \mapsfrom \tau. \]

\begin{definition}
  We write $\sigma\mate\tau$ and say~$\sigma\colon F'X \rar YF$ and~$\tau\colon XG \rar G'Y$ are {\it mates\/}\index{Mates}\index{Natural!transformation!mate}\index{Transformation!mate} (we should really say that~$\sigma = {}^{\mate}\tau$ is the {\it left mate\/} and~$\tau = \sigma^{\mate}$ is the {\it right mate\/}) iff they correspond to each other under this mating bijection. For the special case where~$X$ and~$Y$ are identities, we will occasionally use Mac Lane's nomenclature from \cite{CatsWork} and speak of {\it conjugate\/}\index{Natural transformation!conjugate}\index{Conjugate!natural transformations} transformations.
\end{definition}

\begin{remark}\label{rem:naturality of the mating bijection}
  A useful consequence of the mating bijection's naturality in~$X$ and~$Y$ is that for $\alpha\colon X \rar X'$,~$\beta\colon Y \rar Y'$ and two squares of natural transformations
  \[ \xymatrix{ F'\circ X \ar[r]^-{\sigma} \ar[d]_{F\alpha} & Y\circ F \ar[d]^{\beta_F} \\ F'\circ X' \ar[r]_-{\sigma'} & Y'\circ F } \qquad \xymatrix{ X\circ G \ar[r]^-{\tau} \ar[d]_{\alpha_G} & G'\circ Y \ar[d]^{G'\beta} \\ X'\circ G \ar[r]_-{\tau'} & G'\circ Y' } \]
  where the two horizontal pairs are mates, $\beta_F\circ\sigma$ and $G'\beta\circ\tau$ as well as $\sigma'\circ F\alpha$ and $\tau'\circ\alpha_G$ are again mates. Consequently, the left-hand square commutes iff the right-hand square commutes.
\end{remark}
  
  Due to a lack of references, we shall briefly establish some standard results about mates, which one expects to be true and whose proofs are purely formal (whence the term {\it yoga\/}).
\begin{example}\label{ex:mates and endofunctors}
  If~$X$ and~$Y$ are identities and the two adjunctions $F \dashv G$,~$F'\dashv G'$ are the same, then~$\id_F\mate\id_G$, which is just a complicated way to state the triangle identities. More generally, if the two adjunctions are the same, while~$X$ and~$Y$ are endofunctors equipped with~$\alpha\colon X \rar \id_\Ceu$ and~$\beta\colon \id_\Deu \rar Y$, then by naturality of the mating bijection, $\beta_F\circ F\alpha\mate G\beta\circ\alpha_G$. Dually for the directions of~$\alpha$ and~$\beta$ reversed.
\end{example}

\begin{proposition}
  In the same situation as in the definition, for two natural transformations~$\sigma\colon F'X \rar YF$ and~$\tau\colon XG \rar G'Y$, the following are equivalent:
  \begin{enumerate}
    \item $\sigma\mate\tau$, i.e.~$\sigma = (\tau_F\circ X\eta)^\flat$ or equivalently $\tau = (Y\varepsilon\circ\sigma_G)^\sharp$;
    \item $\sigma^\sharp = G'\sigma\circ\eta'_X = \tau_F\circ X\eta$ or equivalently~$\tau^\flat = \varepsilon'_Y\circ F'\tau = Y\varepsilon\circ\sigma_G$;
    \item for all~$C \in \Ob\Ceu$ and all~$D \in \Ob\Deu$ the rectangle
      \[ \xymatrix{\Deu(FC,D) \ar[r]^{\cong} \ar[d]_{Y} & \Ceu(C,GD) \ar[d]^{X} \\ \Deu'(YFC,YD) \ar[d]_{\sigma_C^*} & \Ceu'(XC,XGD) \ar[d]^{(\tau_D)_*} \\ \Deu'(F'XC,YD) \ar[r]_{\cong} & \Ceu'(XC,G'YD)} \]
      commutes, where the horizontal arrows are the tuning bijections of the two adjunctions.
  \end{enumerate}
\end{proposition}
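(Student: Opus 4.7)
The plan is to prove the three-way equivalence via the sequence of implications $(1) \Leftrightarrow (2) \Rightarrow (3) \Rightarrow (2)$. The first equivalence is essentially a matter of unwinding definitions, while the second is a standard hom-set Yoneda-style argument.

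For $(1) \Leftrightarrow (2)$, recall from \eqref{eqn:mating bijection} and the two displayed formulas immediately after it that the mating bijection is by construction the composite of the bijection $\sigma \mapsto Y\varepsilon \circ \sigma_G$ (the external characterisation of $(F,\varepsilon)$ as an absolute right Kan extension, specialised to the adjunction $F \dashv G$) followed by the $\sharp$-bijection for $F' \dashv G'$. Hence $\sigma \mate \tau$ if and only if $\tau = (Y\varepsilon \circ \sigma_G)^\sharp$, and applying $\flat$ (respectively writing out $\sharp$) turns this into the pair of equations in (2). The equivalence between the two equations in (2) themselves is immediate from the fact that $\sharp$ and $\flat$ are mutually inverse.

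For $(2) \Rightarrow (3)$, I would chase an arbitrary $f\colon FC \to D$ around the rectangle. Going down then right produces $G'Yf \circ G'\sigma_C \circ \eta'_{XC}$ after unfolding the $\sharp$-bijection for $F' \dashv G'$, while going right then down produces $\tau_D \circ XGf \circ X\eta_C$. The naturality square for $\tau$ applied to $f$ gives $\tau_D \circ XGf = G'Yf \circ \tau_{FC}$, so the second path simplifies to $G'Yf \circ \tau_{FC} \circ X\eta_C$. Under hypothesis (2) at the object $C$, the factor $G'\sigma_C \circ \eta'_{XC}$ equals $\tau_{FC} \circ X\eta_C$, and postcomposition with $G'Yf$ yields the desired equality.

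For $(3) \Rightarrow (2)$, I specialise the commuting square to $D = FC$ and $f = \id_{FC}$: the two paths collapse exactly to $G'\sigma_C \circ \eta'_{XC}$ and $\tau_{FC} \circ X\eta_C$, proving the component of (2) at $C$. Since $C$ was arbitrary, the natural transformation identity $\sigma^\sharp = \tau_F \circ X\eta$ follows, and the dual identity is then automatic by $(1) \Leftrightarrow (2)$. The only mildly delicate step is keeping track of directions and of the fact that the horizontal hom-bijections $\Deu(FC,D) \cong \Ceu(C,GD)$ and $\Deu'(F'XC,YD) \cong \Ceu'(XC,G'YD)$ used in the diagram of (3) are literally the sharp maps $g \mapsto Gg \circ \eta_C$ and $h \mapsto G'h \circ \eta'_{XC}$ of the two adjunctions; once this is unwound, both implications reduce to a two-arrow diagram chase and there is no real obstacle.
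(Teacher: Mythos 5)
Your proof is correct and follows essentially the same route as the paper's: $(1)\Leftrightarrow(2)$ by unwinding the definition of the mating bijection, $(3)\Rightarrow(2)$ by specialising the rectangle at $D = FC$ and $f = \id_{FC}$, and $(2)\Rightarrow(3)$ by a diagram chase using naturality (the paper phrases it as ``naturality of the tuning bijection,'' you phrase it via naturality of $\tau$ and the explicit form of $\sharp$, but these are the same calculation). The only presentational difference is that the paper notes both specialisations $D = FC$ and $C = GD$ produce the two equivalent equations of $(2)$ directly, whereas you derive one and appeal to $(1)\Leftrightarrow(2)$ for the other; both are fine.
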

\begin{proof}
  ``(1) $\Leftrightarrow$ (2)'': Trivial.\\
  ``(2) $\Leftrightarrow$ (3)'': Condition (2) is just the commutativity of the diagram for~$D = FC$ and~$C = GD$ respectively. Conversely, this implies the commutativity for all~$C$ and~$D$ by naturality of the tuning bijection for the adjunction.
\end{proof}

\begin{proposition}\label{prop:composition of mates}
  Taking mates is compatible with vertical and horizontal pasting of squares in the following sense:
  \begin{enumerate}
    \item  Given categories and functors
      \[ \xymatrix{ \Ceu \ar[r]^-{X} \ar@<-1ex>[d]_{F}_{}="1" & \Ceu' \ar[r]^-{X'} \ar@<-1ex>[d]_{F'}_{}="11" & \Ceu'' \ar@<-1ex>[d]_{F''}_{}="21" \\
      \Deu \ar[r]_-{Y} \ar@<-1ex>[u]_{G}_{}="2" & \Deu' \ar[r]_-{Y'} \ar@<-1ex>[u]_{G'}_{}="12" & \Deu'' \ar@<-1ex>[u]_{G''}_{}="22" 
        \ar@{}"1";"2"|(.3){\hbox{}}="3"|(.7){\hbox{}}="4" \ar@{-|}"3";"4"
        \ar@{}"11";"12"|(.3){\hbox{}}="13"|(.7){\hbox{}}="14" \ar@{-|}"13";"14"
        \ar@{}"21";"22"|(.3){\hbox{}}="23"|(.7){\hbox{}}="24" \ar@{-|}"23";"24"
      } \]
      together with transformations~$\sigma\colon F'X \rar YF$ and~$\sigma'\colon F''X' \rar Y'F'$ having mates~$\tau = \sigma^{\mate}\colon XG \rar G'Y$ and~$\tau'=\sigma'^{\mate}\colon X'G' \rar G''Y'$ respectively, then~$Y'\sigma\circ\sigma'_X$ and~$\tau'_Y\circ X'\tau$ are again mates. In particular, if $X$, $X'$,~$Y$ and~$Y'$ are identities, then~$\sigma\circ\sigma'$ and~$\tau'\circ\tau$ are conjugate.
    \item Given categories and functors
      \[ \xymatrix@R=6ex{ \Ceu \ar[d]_{X} \ar@<1ex>[r]^-{F_1}^-{}="1" & \Deu \ar[d]_{Y} \ar@<1ex>[l]^-{G_1}^-{}="2" \ar@<1ex>[r]^-{F_2}^-{}="11" & \Eeu \ar[d]^{Z} \ar@<1ex>[l]^-{G_2}^-{}="12" \\ \Ceu' \ar@<1ex>[r]^-{F_1'}^-{}="21" & \Deu' \ar@<1ex>[l]^-{G_1'}^-{}="22" \ar@<1ex>[r]^-{F_2'}^-{}="31" & \Eeu' \ar@<1ex>[l]^-{G_2'}^-{}="32"
      \ar@{}"1";"2"|(.3){\hbox{}}="3"|(.7){\hbox{}}="4" \ar@{-|}"3";"4"
      \ar@{}"11";"12"|(.3){\hbox{}}="13"|(.7){\hbox{}}="14" \ar@{-|}"13";"14"
      \ar@{}"21";"22"|(.3){\hbox{}}="23"|(.7){\hbox{}}="24" \ar@{-|}"23";"24"
      \ar@{}"31";"32"|(.3){\hbox{}}="33"|(.7){\hbox{}}="34" \ar@{-|}"33";"34"
      } \]
      together with transformations~$\sigma_1\colon F_1'X \rar YF_1$ and~$\sigma_2\colon F_2'Y \rar ZF_2$ having mates~$\tau_1\colon XG_1 \rar G_1'Y$ and~$\tau_2\colon YG_2 \rar G_2'Z$, then~${\sigma_2}_{F_1}\circ F_2'\sigma_1$ and~$G_1'\tau_2\circ{\tau_1}_{G_2}$ are again mates. In particular, if $X$,~$Y$ and~$Z$ are identities, then the horizontal composites~$\sigma'\star\sigma$ and~$\tau\star\tau'$ are conjugate. Still more specially, if in addition~$F_2 = F_2'$ and~$\sigma_2$ (whence~$\tau_2$) is the identity, we obtain that~$F_2\sigma_1$ and~${\tau_1}_{G_2}$ are conjugate.
  \end{enumerate}
\end{proposition}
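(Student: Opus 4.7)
My plan is to prove both parts using the equivalent characterization from the preceding proposition, namely that $\sigma\mate\tau$ iff $\sigma^\sharp = G'\sigma\circ\eta'_X = \tau_F\circ X\eta$ (where $\eta,\eta'$ are the units of the two adjunctions involved). Since the pasted transformations on each side are built by whiskering and vertical composition from pieces whose mate relation we already know, the proof reduces to a direct computation using this identity together with naturality.

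For part~(1), write $\eta,\eta',\eta''$ for the three units. Starting from $(Y'\sigma\circ\sigma'_X)^\sharp = G''Y'\sigma\circ G''\sigma'_X\circ\eta''_{X'X}$, I would use the known identity $\sigma'^\sharp = \tau'_{F'}\circ X'\eta'$ to rewrite the trailing two factors as $\tau'_{F'X}\circ X'\eta'_X$. Then naturality of $\tau'\colon X'G'\rar G''Y'$ applied to the 2\hyph cell $\sigma\colon F'X\rar YF$ lets me commute $G''Y'\sigma$ past $\tau'_{F'X}$, producing $\tau'_{YF}\circ X'(G'\sigma\circ\eta'_X) = \tau'_{YF}\circ X'\sigma^\sharp$. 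A final substitution of $\sigma^\sharp = \tau_F\circ X\eta$ yields $(\tau'_Y\circ X'\tau)_F\circ X'X\eta$, which is exactly the defining $\sharp$\hyph identity for the claimed mate. Part~(2) proceeds analogously, but now one must first recall that the unit of the composite adjunction $F_2F_1\dashv G_1G_2$ is $\eta = G_1\eta_{2,F_1}\circ\eta_1$ (and similarly for the primed side). Expanding $({\sigma_2}_{F_1}\circ F_2'\sigma_1)^\sharp$, one uses naturality of $\eta'_2\colon \id\rar G'_2F'_2$ with respect to $\sigma_1$ to re-bracket the whiskered composition, collapses the result to $G_1'(\sigma_2^\sharp)_{F_1}\circ\sigma_1^\sharp$, substitutes the mate identities for the $\sigma_i$, and finally invokes naturality of $\tau_1$ with respect to the 2\hyph cell $\eta_{2,F_1}\colon F_1\rar G_2F_2F_1$ to arrive at $(G_1'\tau_2\circ\tau_{1,G_2})_{F_2F_1}\circ X\eta$, as desired.

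The main obstacle is not conceptual but purely notational: keeping the whiskering with the correct functors straight and identifying which variable each naturality square is being applied to. A more streamlined but equivalent route uses characterization~(3) of the previous proposition; then vertical pasting corresponds to stacking the two hom\hyph set rectangles (after applying $X'$ and $Y'$ to the first) on top of each other, while horizontal pasting exploits the factorization of the tuning bijection for the composite adjunction $F_2F_1\dashv G_1G_2$ through the two individual tuning bijections, so the corresponding rectangles fit side by side. Either way the combined rectangle is automatically commutative, which is precisely the mate relation asserted.
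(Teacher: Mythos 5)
Your proof is correct and follows essentially the same line as the paper: for both parts you expand $(-)^\sharp$ of the pasted square, substitute the known $\sharp$\hyph identities for the component mates, and apply one naturality square ($\tau'$ in part (1); $\eta_2'$ then $\tau_1$ in part (2)) to regroup the terms into the claimed form. The brief mention of the hom\hyph set rectangle route is a legitimate alternative viewpoint but is not what the paper uses, and you don't develop it, so there is nothing substantively different to compare.
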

\begin{proof}
  {\it Ad\/} (1): Writing $(\eta,\varepsilon)$,~$(\eta',\varepsilon')$ and~$(\eta'',\varepsilon'')$ for the unit\hyph counit pairs of the three adjunctions we easily calculate
  \begin{align*}
    \bigl(Y'\sigma\circ\sigma'_X\bigr)^\sharp &= G''Y'\sigma\circ G''\sigma'_X\circ\eta''_{X'X} = G''Y'\sigma\circ{\sigma'}^\sharp_X \\
      &= G''Y'\sigma\circ\tau'_{F'X}\circ X'\eta'_X \stackrel{\text{\tiny $\tau'$ nat}}{=} \tau'_{YF}\circ X'G'\sigma\circ X'\eta_X \\
      &= \tau'_{YF}\circ X'\sigma^\sharp = \tau'_{YF}\circ X'\tau_F\circ X'X\eta.
  \end{align*}
  {\it Ad\/} (2): Again, writing $(\eta_1,\varepsilon_1)$, $(\eta_2,\varepsilon_2)$~$(\eta_1',\varepsilon_1')$ and~$(\eta_2',\varepsilon_2')$ for the unit\hyph counit pairs this is just a routine calculation:
    \begin{align*}
      \bigl({\sigma_2}_{F_1}\circ F_2'\sigma_1\bigr)^\sharp\ \ &=\ \ G_1'G_2'{\sigma_2}_{F_1}\circ G_1'G_2'F_2'{\sigma_1}\circ (G_1'{\eta_2'}_{F_1'}\circ\eta_1')_X \\
        &\stackrel{\scriptscriptstyle\mathclap{\eta_2'\text{ nat}}}{=}\ \ G_1'G_2'{\sigma_2}_{F_1}\circ G_1'{\eta_2'}_{YF_1}\circ G_1'\sigma_1\circ{\eta_1'}_X  \\[1ex]
        &=\ \ G_1'{\sigma_2^\sharp}_{F_1}\circ\sigma_1^\sharp = G_1'{\tau_2}_{F_2F_1}\circ G_1'Y{\eta_2}_{F_1}\circ {\tau_1}_{F_1}\circ X\eta_1 \\[1ex]
        &\stackrel{\scriptscriptstyle\mathclap{\tau_1\text{ nat}}}{=}\ \ G_1'{\tau_2}_{F_2F_1}\circ{\tau_1}_{G_2F_2F_1}\circ XG_1{\eta_2}_{F_1}\circ X\eta_1 \\[1ex]
        &=\ \ \bigl(G_1'\tau_2\circ{\tau_1}_{G_1}\bigr)_{F_2F_1}\circ X\bigl(G_1{\eta_2}_{F_1}\circ\eta_1\bigr).
     \end{align*}
\end{proof}

\begin{corollary}\label{cor:conjugate isomorphisms}
  Given two adjunctions $(F\dashv G,\eta,\varepsilon)$, $(F'\dashv G',\eta',\varepsilon')\colon \Ceu \rightleftarrows\Deu$ and conjugate transformations $\sigma\colon F \rar F'$,~$\tau\colon G' \rar G$, then~$\sigma$ is an isomorphism iff~$\tau$ is one. 
\end{corollary}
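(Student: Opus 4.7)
The plan is to exploit the formal properties of the mating bijection already established: that it is a bijection (from the propositions preceding Definition~7.3), that it sends identities to identities (Example~7.4), and that it is compatible with composition in the sense of Proposition~7.5(1). These three properties together say that mating behaves like an anti-isomorphism of composition, and invertibility transfers across any such correspondence.

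Concretely, assume $\sigma\colon F \rar F'$ is invertible and let $\sigma^{-1}\colon F' \rar F$ denote its inverse. I would first invoke the mating bijection to produce the unique $\tau'\colon G \rar G'$ with $\sigma^{-1}\mate\tau'$. Then I would apply Proposition~7.5(1) twice, in both cases with all horizontal functors $X,X',Y,Y'$ taken to be identities, organising the three adjunctions in the chain as $F'\dashv G'$, $F\dashv G$, $F'\dashv G'$ and then the other way round. The first application yields that $\sigma\circ\sigma^{-1} = \id_{F'}$ is conjugate to $\tau'\circ\tau$, and the second that $\sigma^{-1}\circ\sigma = \id_F$ is conjugate to $\tau\circ\tau'$. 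By Example~7.4 the identity transformations $\id_{F'}$ and $\id_F$ are already conjugate to $\id_{G'}$ and $\id_G$ respectively, so the bijectivity of the mating correspondence forces $\tau'\circ\tau = \id_{G'}$ and $\tau\circ\tau' = \id_G$. Thus $\tau$ is an isomorphism with inverse $\tau'$.

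The reverse implication is then obtained by the same argument with the roles of the two adjunctions (equivalently, of $\sigma$ and $\tau$) interchanged; using the inverse mating bijection ${}^{\mate}(-)$ in place of $(-)^{\mate}$, one starts from $\tau^{-1}$ and produces a two-sided inverse for $\sigma$ in exactly the same way.

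The only delicate point is bookkeeping the directions of the 2-cells and lining them up with the conventions of Proposition~7.5(1), where $\sigma$ has type $F'X \rar YF$ so that after specialising $X=Y=\id$ the roles of ``primed'' and ``unprimed'' get swapped relative to our corollary. Once the labelling is fixed, the proof is purely formal; no unit/counit calculations are required, because all the necessary naturality and triangle-identity juggling has already been absorbed into Example~7.4 and Proposition~7.5.
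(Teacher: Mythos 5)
Your proof is correct and is essentially the paper's own argument: take $\tau'$ to be the mate of $\sigma^{-1}$, use vertical composition of mates (Proposition~\ref{prop:composition of mates}(1) with $X,X',Y,Y'$ identities) together with the fact that identities are conjugate to identities (Example~\ref{ex:mates and endofunctors}), and conclude by injectivity of the mating bijection that $\tau'$ is a two-sided inverse to $\tau$; the converse is by symmetry. (Your type bookkeeping $\sigma\circ\sigma^{-1}=\id_{F'}$, $\sigma^{-1}\circ\sigma=\id_F$ is in fact cleaner than the published proof, which has a couple of typos in exactly those identities and in the type of $\tau'$; your section numbers for the cited results are off, but the results cited are the right ones.)
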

\begin{proof}
  Let $\tau'\colon G' \rar G$ be the mate of~$\sigma^{-1}$. Then by point (1) in the proposition (with $F'' = F$, $G'' = G$ and $X$, $X'$, $Y$,~$Y'$ all identities) $\sigma\circ\sigma^{-1} = \id_F$ and~$\tau'\circ\tau$ are mates, so that $\tau'\circ\tau = \id_{G'}$. Similarly, $\sigma^{-1}\circ\sigma = \id_{F'}$ and~$\tau\circ\tau'$ are mates, which proves our claim.
\end{proof}

\section{Beck-Chevalley Condition}
Later on, we will find ourselves in the situation where we have a mating square as in \ref{eqn:mating bijection} with a natural isomorphism $\tau\colon XG\cong G'Y$ but where we would really like its mate $\sigma\colon F'X \rar YF$ to be an isomorphism.
\begin{definition}\label{defn:beck-chevalley}
  Consider a square of categories and functors together with a natural transformation
  \[ \vcenter{\xymatrix{ \Ceu \ar[d]_{X} \ar@{}[dr]|(.4){}="1"|(.6){}="2" & \Deu \ar[l]_{G} \ar[d]^{Y} \\ \Ceu' & \Deu' \ar[l]^{G'} \ar@{=>}"1";"2"^{\tau} }} \qquad
  \newbox\bcdefbox\setbox\bcdefbox=\hbox{where~$G$ and~$G'$ have left adjoints}
  \vcenter{\vbox{\copy\bcdefbox\vskip.5\baselineskip\hbox to \wd\bcdefbox{\hfil$(F\dashv G,\eta,\varepsilon),\, (F'\dashv G',\eta',\varepsilon')$.\hfil}}} \]
  We then say that the square satisfies the {\it Beck-Chevalley condition\/}\index{Beck-Chevalley!condition} (or that it's a {\it Beck-Chevalley square\/}\index{Beck-Chevalley!square}\index{Square!Beck-Chevalley}) iff the mate~${}^{\mate}\tau$ is an isomorphism~$F'\circ X \cong Y\circ F$. Dually, there is the {\it dual Beck-Chevalley condition\/}, where we start with $F$,~$F'$ and~$\sigma$ and then require the mate~$\sigma^{\mate}$ to be an isomorphism. Usually, $\tau$ is some sort of canonical isomorphism and is then often not explicitly mentioned. However, possible confusion can arise if the functors~$X$ and~$Y$ themselves have left adjoints. In that case, we shall speak of the {\it horizontal\/} and {\it vertical Beck-Chevalley condition\/} according to whether one considers the horizontal or vertical pairs of adjunctions; the case in the above definition being the horizontal one.
\end{definition}

\begin{example}
  By \ref{cor:conjugate isomorphisms}, if~$X$ and~$Y$ are identities, then the square from the definition satisfies the Beck-Chevalley condition iff~$\tau$ is an isomorphism.
\end{example}

\begin{example}
  The mate of~$\tau$ is explicitly given by $\varepsilon'_{YF}\circ F'\tau_F\circ F'X\eta$; so if~$\tau$ is an isomorphism and $F$,~$G'$ are fully faithful (i.e.~$\eta$ and~$\varepsilon'$ are isomorphisms), then the square satisfies the Beck-Chevalley condition.
\end{example}

\begin{example}\label{ex:beck-chevalley for colimits}
  More importantly for us, if $\Ieu$,~$\Jeu$ are index categories and~$\Ceu$ is a category with $\Ieu$\hyph colimits, then $\Ceu^\Jeu$ has $\Ieu$\hyph colimits, too. A colimit functor is given by
  \[ \colim\colon \Ceu^{\Ieu\times\Jeu}\cong(\Ceu^\Jeu)^\Ieu \to \Ceu^\Jeu,\, X \mapsto \Bigl(J \mapsto \colim X(-,J)\Bigr) \]
  (i.e.~colimits are calulcated pointwise). If we write~$\eta'$ for the unit of the adjunction $\colim\colon \Ceu^\Ieu \rightleftarrows\Ceu\lon \Delta$, whose components are just the universal cocones, then a unit of $\colim\colon \Ceu^{\Ieu\times\Jeu}\rightleftarrows \Ceu^\Jeu\lon\Delta$ is given by $\eta_{X,I,J} \defas \eta'_{X(-,J),I}$. All in all, the square
  \[ \xymatrix{ \Ceu^{\Ieu\times\Jeu}\ar[d]_{\ev_J}\ar@{}[dr]|(.4){}="1"|(.6){}="2" & \Ceu^\Jeu \ar[l]_-{\Delta}\ar[d]^{\ev_J} \\ \Ceu^\Ieu & \Ceu \ar[l]^-{\Delta} \ar@{=>}"1";"2"^{\id} } \]
  satisfies the Beck-Chevalley condition for all $J \in \Ob{\Jeu}$. Indeed, the mate of the identity is again the identity and $\ev_J\eta = \eta'_{\ev_J}$.
\end{example}

\begin{observation}
  According to~\ref{prop:composition of mates}, horizontal and vertical composites of Beck-Che\-va\-lley squares (defined in the obvious manner) are again Beck-Chevalley squares.
\end{observation}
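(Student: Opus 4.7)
The plan is to reduce the observation directly to Proposition~\ref{prop:composition of mates}, which gives an explicit formula for the mate of a pasted square in terms of the mates of its constituents. Since a square is Beck--Chevalley exactly when its mate is an isomorphism, and composites of isomorphisms are isomorphisms, the claim should be essentially a one-line consequence once the data is set up properly.

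More concretely, for the vertical composite I would take two stacked squares satisfying the Beck--Chevalley condition, labelled as in part~(1) of Proposition~\ref{prop:composition of mates}, with $\tau\colon XG\rar G'Y$ and $\tau'\colon X'G'\rar G''Y'$ the given (iso)transformations and $\sigma={}^{\mate}\tau$, $\sigma'={}^{\mate}\tau'$ their mates (assumed invertible by hypothesis). The vertical composite square has horizontal sides $G$ and $G''$ (with left adjoints $F$ and $F''$), vertical sides $X'\circ X$ and $Y'\circ Y$, and filling transformation $\tau'_Y\circ X'\tau\colon X'XG\rar G''Y'Y$. Proposition~\ref{prop:composition of mates}(1) identifies its mate as $Y'\sigma\circ\sigma'_X$, which is a composite of isomorphisms and hence invertible. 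The horizontal case is handled identically using part~(2): the mate of the horizontally pasted square is ${\sigma_2}_{F_1}\circ F_2'\sigma_1$, again a composite of isomorphisms.

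There is really no obstacle here beyond bookkeeping: one must only verify that the ``obvious'' notion of horizontal/vertical composition of Beck--Chevalley squares matches the pasting considered in Proposition~\ref{prop:composition of mates}, and then invoke that proposition. The only mild subtlety is that Definition~\ref{defn:beck-chevalley} presents the Beck--Chevalley condition starting from $\tau$ (with the requirement that its mate $\sigma={}^{\mate}\tau$ be invertible), whereas Proposition~\ref{prop:composition of mates} is phrased starting from $\sigma$; but by Remark~\ref{rem:naturality of the mating bijection} (or simply the bijectivity of mating) the two perspectives are equivalent, and invertibility of $\sigma$ is equivalent to invertibility of $\tau$ in the setting at hand by Corollary~\ref{cor:conjugate isomorphisms} applied to the relevant triangle. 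The dual Beck--Chevalley statement, as well as the mixed horizontal/vertical version, follows symmetrically.
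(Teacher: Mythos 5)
Your proof is correct and takes exactly the approach the paper has in mind: the observation is stated as a direct consequence of Proposition~\ref{prop:composition of mates}, and your identification of the mate of the pasted square as a composite of the constituent mates (hence an isomorphism) is precisely that reduction. The matching of part~(1) to vertical pasting and part~(2) to horizontal pasting is also right.

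One side remark is wrong, though it does not damage the argument: your claim that ``invertibility of $\sigma$ is equivalent to invertibility of $\tau$ in the setting at hand by Corollary~\ref{cor:conjugate isomorphisms}'' misapplies that corollary, which concerns \emph{conjugate} transformations, i.e.\ the special case where $X$ and $Y$ are identities (as the example immediately after Definition~\ref{defn:beck-chevalley} notes). For a general mating square, $\tau$ being an isomorphism emphatically does not force its mate $\sigma$ to be one --- that failure is exactly what makes the Beck--Chevalley condition a nontrivial condition rather than automatic. The genuine resolution of the ``subtlety'' you raise is only the first half of your sentence: mating is a bijection, so the proposition can be read starting from either $\sigma$ or $\tau$. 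Fortunately you never actually use the spurious equivalence; the real work is done by observing that the composite's mate is $Y'\sigma\circ\sigma'_X$ (resp.\ $\sigma_{2,F_1}\circ F_2'\sigma_1$), a composite of the hypothesised isomorphisms.
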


The situation gets really interesting when~$X$ and~$Y$ do have right adjoints. In that case, there is the following interchange law for mates as stated e.g.~in \cite[Lemma 1.20]{Groth2013}.%
\begin{theorem}[Beck-Chevalley Interchange]\hskip1em
  Consider a square of categories and functors together with a natural transformation $\tau\colon XG \rar G'Y$ as in the above definition and where all four functors have adjoints
  \[ (F \dashv G,\eta,\varepsilon),\quad (F'\dashv G',\eta',\varepsilon'),\quad (X \dashv S,\theta,\zeta)\quad\text{and}\quad (Y \dashv T,\theta',\zeta'), \]
  so that~$\tau$ has a horizontal left mate $\sigma\!\colon\! F'X \rar YF$ as well as a vertical right mate $\rho\!\colon\! GT \rar SG'$. Then~$\sigma$ and~$\rho$ are conjugate natural transformations. In particular, $\sigma$ is an isomorphism iff~$\rho$ is one and thus the square satisfies the horizontal Beck-Chevalley condition iff it satisfies the vertical dual Beck-Chevalley condition.
\end{theorem}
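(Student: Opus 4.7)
The plan is to verify the conjugacy of~$\sigma$ and~$\rho$ by computing the mate $\sigma^\mate$ of~$\sigma$ with respect to the composite adjunctions $F'X\dashv SG'$ and $YF\dashv GT$ and showing that it coincides with~$\rho$. Since conjugate transformations are by definition mates whose outer functors are identities, this will establish the claim, and the final assertion about isomorphisms then follows immediately from Corollary~\ref{cor:conjugate isomorphisms}.

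To carry this out, I would write the units and counits of the composites as $\eta^{F'X} = S\eta'_X\circ\theta$ and $\varepsilon^{YF} = \zeta'\circ Y\varepsilon_T$, so that the mating bijection~\eqref{eqn:mating bijection} gives
\[ \sigma^\mate \;=\; SG'\varepsilon^{YF}\circ SG'\sigma_{GT}\circ\eta^{F'X}_{GT}. \]
Substituting these together with the explicit expression $\sigma=\varepsilon'_{YF}\circ F'\tau_F\circ F'X\eta$ yields a composite of seven whiskerings. Simplification then proceeds from both ends. On the left, naturality of~$\eta'$ pushes $S\eta'_{XGT}$ past $SG'F'X\eta_{GT}$ and then past $SG'F'\tau_{FGT}$, producing an adjacent block $SG'\varepsilon'_{YFGT}\circ S\eta'_{G'YFGT}$ that collapses by the triangle identity of~$F'\dashv G'$. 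Symmetrically, on the right, naturality of~$\theta$ combined with naturality of~$\tau$ moves $\theta_{GT}$ inward until it meets a block $G\varepsilon_T\circ\eta_{GT}$, which collapses by the triangle identity of~$F\dashv G$. What remains is precisely $SG'\zeta'\circ S\tau_T\circ\theta_{GT}=\rho$.

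The main obstacle is purely combinatorial bookkeeping: the naturalities must be invoked at the right positions so as to assemble the triangle-identity pairs, and a misordered move leaves an expression admitting no further simplification. A more conceptual alternative would be to deduce the conjugacy from the compatibility of mating with horizontal and vertical pasting (Proposition~\ref{prop:composition of mates}), applied to a suitable decomposition of the given square, but setting this up cleanly appears to require essentially the same amount of bookkeeping and no shorter argument seems to be available.
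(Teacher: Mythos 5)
Your computation is correct and goes through as you describe: after substituting $\sigma=\varepsilon'_{YF}\circ F'\tau_F\circ F'X\eta$, $\eta^{F'X}=S\eta'_X\circ\theta$, $\varepsilon^{YF}=\zeta'\circ Y\varepsilon_T$ into the two-sided mating formula, two naturality moves for $\eta'$ set up a triangle identity for $F'\dashv G'$, while naturality of $\theta$ and $\tau$ set up a triangle identity for $F\dashv G$, leaving $SG'\zeta'\circ S\tau_T\circ\theta_{GT}=\rho$.

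The paper proves the same equality but sidesteps almost all of this bookkeeping, and the shortcut is worth noticing. Instead of expanding $\sigma$ via the $\flat$-formula and then cancelling, it computes the one-sided adjunct
\[\sigma^\sharp \;=\; SG'\sigma\circ\eta^{F'X} \;=\; S(G'\sigma\circ\eta'_X)\circ\theta\]
and then uses the one-sided defining equations of the two mates from Proposition~4.6(2): replace $G'\sigma\circ\eta'_X$ by $\tau_F\circ X\eta$ (since $\sigma$ is the mate of $\tau$), apply one naturality of $\theta$ to get $(S\tau\circ\theta_G)_F\circ\eta$, and replace $S\tau\circ\theta_G$ by $\rho_Y\circ G\theta'$ (since $\rho$ is the mate of $\tau$). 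The result $\rho_{YF}\circ G\theta'_F\circ\eta=\rho_{YF}\circ\eta^{YF}$ is precisely condition~(2) for $\sigma$ and $\rho$ to be conjugate. This uses one naturality and no triangle identities at all, whereas your route needs four naturalities and both triangle identities --- which is what you were feeling as ``purely combinatorial bookkeeping.'' The moral is that when a transformation is defined as a mate, one should manipulate its defining equation $G'\sigma\circ\eta'_X=\tau_F\circ X\eta$ rather than its closed-form expression; the explicit $\flat$-formula already has a triangle identity baked into it, so expanding and then cancelling just re-derives work that the defining equation gives for free.
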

\begin{proof}
  Recall that the two mates~$\sigma$ and~$\rho$ are defined by
  \[ G'\sigma\circ\eta'_X = \tau_F\circ X\eta \qquad\text{and}\qquad S\tau\circ\theta_G = \rho_Y\circ G\theta' \]
  and we need to check that~$\sigma$ and~$\rho$ are conjugate with respect to the composite adjunctions
  \[ \xymatrix{ \Ceu \ar@<1ex>[r]^-{F}^-{}="1"\ar@{=}[d] & \Deu \ar@<1ex>[l]^-{G}^-{}="2" \ar@<1ex>[r]^-{Y}^-{}="11" & \Deu' \ar@<1ex>[l]^-{T}^-{}="12" \ar@{=}[d] \\  \Ceu \ar@<1ex>[r]^-{X}^-{}="21" & \Ceu' \ar@<1ex>[l]^-{S}^-{}="22" \ar@<1ex>[r]^-{F'}^-{}="31" & \Deu' \ar@<1ex>[l]^-{G'}^-{}="32" & *!<2em,.5ex>{.}
      \ar@{}"1";"2"|(.3){\hbox{}}="3"|(.7){\hbox{}}="4" \ar@{-|}"3";"4"
      \ar@{}"11";"12"|(.3){\hbox{}}="13"|(.7){\hbox{}}="14" \ar@{-|}"13";"14"
      \ar@{}"21";"22"|(.3){\hbox{}}="23"|(.7){\hbox{}}="24" \ar@{-|}"23";"24"
      \ar@{}"31";"32"|(.3){\hbox{}}="33"|(.7){\hbox{}}="34" \ar@{-|}"33";"34" } \]
   For this, we just need to take the adjunct of~$\sigma$, which is
   \begin{align*}
     \sigma^\sharp &= S(G'\sigma\circ\eta'_X)\circ\theta = S(\tau_F\circ X\eta)\circ\theta = S\tau_F\circ SX\eta\circ\theta \\
      &= S\tau_F\circ\theta_{GF}\circ\eta = (S\tau\circ\theta_G)_F\circ\eta = (\rho_Y\circ G\theta')_F\circ\eta = \rho_{YF}\circ G\theta'_F\circ\eta
   \end{align*}
 and~$G\theta'_F\circ\eta$ is the unit of the upper composite adjunction.
\end{proof}

\begin{corollary}
  Consider a square as in~\ref{defn:beck-chevalley} with~$\tau\colon XG\cong G'Y$ an isomorphism. If~$Y$ has a fully faithful right adjoint and~$X$ is itself fully faithful and has a right adjoint, then the square satisfies the Beck-Chevalley condition.
\end{corollary}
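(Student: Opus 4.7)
My plan is to apply the Beck-Chevalley interchange theorem in order to trade the horizontal left mate $\sigma = {}^{\mate}\tau\colon F'X \rar YF$ (which we want to show is an isomorphism) for its vertical right mate $\rho\colon GT \rar SG'$, and then exhibit $\rho$ as a composite of three isomorphisms built directly from the hypotheses. Write $(X \dashv S, \theta, \zeta)$ and $(Y \dashv T, \theta', \zeta')$ for the right adjoints supplied by the hypothesis. The interchange theorem (and the very last line of its proof) asserts that $\sigma$ and $\rho$ are conjugate with respect to the composite adjunctions $F'X \dashv SG'$ and $YF \dashv GT$, so by Corollary~\ref{cor:conjugate isomorphisms}, $\sigma$ is invertible if and only if $\rho$ is.

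Next I would make $\rho$ explicit. The defining equation from the interchange proof is
\[ \rho_Y\circ G\theta' = S\tau\circ\theta_G, \]
and inverting this under the adjunction $Y \dashv T$ (i.e.\ evaluating at $T$ and composing with $SG'\zeta'$, which is the standard $\sharp$-$\flat$ formula) yields
\[ \rho = SG'\zeta' \circ S\tau_T \circ \theta_{GT}. \]
Each of the three factors is an isomorphism: $\tau$ is iso by hypothesis, so $S\tau_T$ is iso; $X$ is fully faithful, so the unit $\theta$ is iso and hence $\theta_{GT}$ is iso; and $T$ is fully faithful, so the counit $\zeta'$ is iso and hence $SG'\zeta'$ is iso. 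Therefore $\rho$ is iso, and by the previous step so is $\sigma$.

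The main obstacle is really just bookkeeping rather than mathematics: one must correctly identify which of the four possible mates of $\tau$ is the ``vertical right mate'' referenced in the interchange theorem, and then resolve its implicit definition to the three-factor composite above. Once this is written down, the three hypotheses (isomorphism of $\tau$, full faithfulness of $X$, full faithfulness of $T$) read off in direct one-to-one correspondence with the three factors, so no further computation is required.
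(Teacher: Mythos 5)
Your proof is correct and follows exactly the paper's own route: invoke Beck-Chevalley interchange to pass from the horizontal mate $\sigma$ to the vertical right mate $\rho$, observe that $\rho = SG'\zeta'\circ S\tau_T\circ\theta_{GT}$, and note that each of the three factors is invertible by the hypotheses ($\tau$ an iso, $X$ fully faithful so $\theta$ iso, $T$ fully faithful so $\zeta'$ iso). The only difference is that you spell out the $\sharp$-$\flat$ computation deriving the formula for $\rho$ and explicitly cite the conjugate-isomorphism corollary, whereas the paper quotes the formula directly and relies on the ``in particular'' clause of the interchange theorem.
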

\begin{proof}
  The square satisfies the (horizontal) Beck-Chevalley condition iff it satisfies the vertical dual Beck-Chevalley condition. Taking adjoints
  \[ (X\dashv S,\theta,\zeta)\colon \adj{\Ceu}{\Ceu'} \qquad\text{and}\qquad (Y\dashv T,\theta',\zeta')\colon \adj{\Deu}{\Deu'} \]
  the mate of~$\tau$ for the vertical adjoints is~$SG'\zeta'\circ S\tau_T\circ\theta_{GT}$.  But~$\tau$ is an isomorphism, same as~$\zeta'$ (since~$T$ is fully faithful) and~$\theta$ (since~$X$ is fully faithful).
\end{proof}

\begin{corollary}
  Given a mating square with~$X$ and~$Y$ equivalences
  \[ \vcenter{\xymatrix{ \Ceu \ar@<1ex>[r]^-{F}^-{}="1" \ar[d]_{X} & \Deu \ar@<1ex>[l]^-{G}^-{}="2" \ar[d]^{Y} \\ \Ceu' \ar@<1ex>[r]^-{F'}^-{}="11" & \ar@<1ex>[l]^-{G'}^-{}="12" \Deu' & *!<2em,.5em>{,}
     \ar@{}"1";"2"|(.3){\hbox{}}="3"|(.7){\hbox{}}="4" \ar@{|-}"4";"3"
     \ar@{}"11";"12"|(.3){\hbox{}}="13"|(.7){\hbox{}}="14" \ar@{|-}"14";"13" }} \]
  then~$\sigma\colon F'X \rar YF$ is an isomorphism iff its mate~$\tau\colon XG \rar G'Y$ is so.
\end{corollary}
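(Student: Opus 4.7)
The plan is to invoke the Beck-Chevalley Interchange theorem, exploiting the fact that every equivalence of categories is part of an adjoint equivalence whose unit and counit are both natural isomorphisms. First I would promote $X$ and $Y$ to adjoint equivalences $(X \dashv S, \theta, \zeta)$ and $(Y \dashv T, \theta', \zeta')$ with $\theta$, $\zeta$, $\theta'$, and $\zeta'$ all natural isomorphisms; in particular the right adjoints $S$ and $T$ are themselves equivalences and hence conservative (they reflect isomorphisms).

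With these adjunctions in hand, all of the hypotheses of the Beck-Chevalley Interchange theorem are satisfied, and it tells us that the horizontal left mate $\sigma\colon F'X \rar YF$ of $\tau$ is conjugate to its vertical right mate $\rho\colon GT \rar SG'$. By the conjugate isomorphism corollary established earlier, $\sigma$ is therefore a natural isomorphism if and only if $\rho$ is.

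It then remains to relate invertibility of $\rho$ to invertibility of $\tau$. The defining relation $\rho_Y \circ G\theta' = S\tau \circ \theta_G$, together with the invertibility of $\theta$ and $\theta'$, shows that $\rho_Y$ is a natural isomorphism iff $S\tau$ is. Essential surjectivity of $Y$ (every object of $\Deu'$ is isomorphic to some $YD$) forces $\rho$ to be a natural isomorphism iff $\rho_Y$ is, while conservativity of $S$ forces $\tau$ to be a natural isomorphism iff $S\tau$ is. Chaining these equivalences produces the desired biconditional $\sigma$ iso $\Leftrightarrow$ $\tau$ iso.

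The main ``obstacle'' is not really an obstacle at all; since the genuine content already sits in the Beck-Chevalley Interchange theorem and in the elementary observation that every equivalence can be made into an adjoint equivalence with invertible unit and counit, the argument reduces to careful bookkeeping of the mating correspondence.
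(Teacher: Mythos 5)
Your proposal is correct and, unpacked, amounts to the same argument the paper has in mind: the one-line proof in the text (``every equivalence is fully faithful and has a fully faithful left and right adjoint'') is simply shorthand for re-running the proof of the immediately preceding corollary, which already goes through the Beck-Chevalley interchange theorem and the explicit formula $\rho = SG'\zeta' \circ S\tau_T \circ \theta_{GT}$ for the vertical mate. Your step of reading invertibility of $\rho$ off the defining relation $\rho_Y \circ G\theta' = S\tau\circ\theta_G$ and then using essential surjectivity of $Y$ and conservativity of $S$ is a minor variant of the paper's use of that closed formula, but both reduce to the same observation that with $X$, $Y$ equivalences, all of $\theta,\theta',\zeta,\zeta'$ are invertible and $S$, $T$ are equivalences, so $\rho$ is invertible iff $\tau$ is. The one thing your write-up does more carefully than the paper is to spell out both directions of the ``iff,'' which the paper's terse proof leaves implicit.
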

\begin{proof}
  Every equivalence is fully faithful and has a fully faithful left and right adjoint.
\end{proof}

Unfortunately, if the two vertical arrows~$X$ and~$Y$ in \ref{defn:beck-chevalley} have left adjoints rather than right ones, there is no nice interchange law. However, we can still use such adjoints to our advantage.
\begin{theorem}
  Again consider a square as in~\ref{defn:beck-chevalley}
  with~$\tau\colon XG\cong G'Y$ a natural isomorphism and where all four functors have left adjoints
  \[ (F \dashv G,\eta,\varepsilon),\quad (F'\dashv G',\eta',\varepsilon'),\quad (M \dashv X,\theta,\zeta)\quad\text{and}\quad (N \dashv Y,\theta',\zeta'). \]
  \begin{enumerate}
    \item If~$X$ and~$N$ are fully faithful, then the square satisfies the Beck-Chevalley condition.
    \item If both~$M$ and~$N$ or both~$X$ and~$Y$ are fully faithful, then the square satisfies the Beck-Chevalley condition iff there is an isomorphism $F'X \cong YF$.
  \end{enumerate}
\end{theorem}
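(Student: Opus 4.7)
The plan is to introduce the composite adjunctions $FM \dashv XG$ and $NF' \dashv G'Y$ between $\Ceu'$ and $\Deu$, noting that $\tau^{-1}\colon G'Y \to XG$ is then a natural isomorphism between the right adjoints of these two adjunctions. Corollary~\ref{cor:conjugate isomorphisms} therefore produces, for free, a natural isomorphism $\sigma_0\colon FM \cong NF'$ between the corresponding left adjoints; this is the sole input both parts extract from the hypothesis that $\tau$ is an isomorphism.

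For part~(1), I would verify by direct computation the identity
\[ \sigma^\flat = F\zeta\circ\sigma_{0,X}^{-1}, \]
where $\sigma^\flat\colon NF'X \to F$ is the adjunct of $\sigma$ under $N\dashv Y$. Writing both sides out using the explicit mate formula $\sigma = \varepsilon'_{YF}\circ F'\tau_F\circ F'X\eta$ and the analogous composite for $\sigma_0^{-1}$ coming from the units and counits of the composite adjunctions, the identity reduces, via the naturality of $\tau$, $\eta$, $\varepsilon'$ and $\zeta'$, to a single invocation of the triangle identity $X\zeta_C\circ\theta_{XC} = \id_{XC}$ for $M\dashv X$. Once this identity is in hand, $X$ being fully faithful makes $\zeta$ (and hence $F\zeta$) an isomorphism, so $\sigma^\flat$ is an isomorphism; and $N$ being fully faithful makes $\theta'$ an isomorphism, whence $\sigma = Y\sigma^\flat\circ\theta'_{F'X}$ is an isomorphism as a composite of isomorphisms.

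For part~(2) the $(\Rightarrow)$ direction is automatic, and for $(\Leftarrow)$ I would reuse the identity from part~(1), whose derivation relies only on naturality and the triangle identity of $M\dashv X$, not the fully faithful hypotheses. In the case where $X$ and $Y$ are fully faithful, the argument of part~(1) already yields $\sigma^\flat$ iso; the given isomorphism $F'X \cong YF$ places $F'XC$ in the essential image of $Y$, whence the universal property of the reflective subcategory $\Deu\hookrightarrow\Deu'$ (with unit $\theta'$) forces $\theta'_{F'XC}$ to be iso, so $\sigma = Y\sigma^\flat\circ\theta'_{F'X}$ is iso. The case where $M$ and $N$ are fully faithful is handled dually through the coreflective subcategory $\Deu'\hookrightarrow\Deu$: now $\theta'$ is automatically iso everywhere, so by the same formula $\sigma$ iso reduces to $YF\zeta$ iso, which one derives by identifying $YF\zeta$ with the composite iso $YFMX \cong YF$ assembled from $\sigma_0$, $\theta'^{-1}$ and the hypothetical $\varphi$.

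The main obstacle is verifying the identity $\sigma^\flat = F\zeta\circ\sigma_{0,X}^{-1}$ of part~(1): at face value the two sides look rather different, and collapsing the five-step composite for $\sigma_0^{-1}$ down to $\sigma^\flat$ requires exactly the right chain of naturality moves to bring a pair $X\zeta_C\circ\theta_{XC}$ into position so that the triangle identity can be invoked. After this identity has been established, the remainder of both parts is routine.
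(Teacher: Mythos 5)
Your proof is correct and follows essentially the same route as the paper: both derive the formula $\sigma = YF\zeta\circ Y\rho_X\circ\theta'_{F'X}$ (you express it as $\sigma^\flat = F\zeta\circ\rho_X$ together with $\sigma = Y\sigma^\flat\circ\theta'_{F'X}$) where $\rho$ is the conjugate of $\tau$ under the composite adjunctions, and then argue invertibility of the three factors under the various fully-faithfulness hypotheses. The only superficial difference is that you verify the key identity by direct manipulation of the explicit mate formula (ending in the triangle identity for $M\dashv X$), while the paper postcomposes the defining equation for $\rho$ with $G'YF\zeta$ and invokes uniqueness of the mate -- both yield the same conclusion.
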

\begin{proof}
  The mate~$\sigma$ of~$\tau$ is the unique natural transformation that satisfies $G'\sigma\circ\eta'_X = \tau_F\circ X\eta$ and so, it suffices to construct a natural isomorphism~$\sigma$ subject to this equation. For this, we consider the conjugate of~$\tau$ with respect to the two composite adjunctions, which is the unique isomorphism
  \[ \rho\colon N \circ F' \cong F\circ M\qquad\text{satisfying}\qquad G'Y\rho\circ G'\theta'_{F'}\circ\eta' = \tau_{FM}\circ X\eta_M\circ\theta. \]
  Evaluating this equation at~$X$ and postcomposing with~$G'YF\zeta$ yields
\begin{align*}
  \MoveEqLeft G'(YF\zeta\circ Y\rho_X\circ \theta'_{F'X})\circ\eta'_X = G'YF\zeta\circ \tau_{FMX}\circ X\eta_{MX}\circ\theta_X \\
    &= \tau_F\circ XGF\zeta\circ X\eta_{MX}\circ\theta_X = \tau_F\circ X\eta\circ X\zeta\circ\theta_X = \tau_F\circ X\eta,
\end{align*}
  so that the mate of~$\tau$ is $\sigma = YF\zeta\circ Y\rho_X\circ\theta'_{F'X}$. Under the hypotheses of (1), this is an isomorphism since~$\zeta$ and~$\theta'$ are invertible, thus proving the first point. For point (2), assume that~$M$ and~$N$ are fully faithful and there is an isomorphism $F'X \cong YF$. We need to check that~$YF\zeta$ is invertible, which is obvious since $YF\cong F'X$ and~$X\zeta = \theta_X^{-1}$ is invertible. Similarly if~$X$ and~$Y$ are fully faithful.
\end{proof}

\section{Derived Adjunctions}\label{sec:derived adjunctions}
As already mentioned, there is a beauti- and useful interplay between adjunctions and absolute derived functors; the most well-known instance of it perhaps being the famous Quillen adjoint functor theorem. Most proofs of it rely heavily on the explicit construction of a derived functor by means of (co)fibrant replacements whereas our approach really gets down to its bare bones. For this, let us fix two categories $\Ceu$,~$\Deu$ each equipped with a class of weak equivalences and let us write~$H_\Ceu$ and~$H_\Deu$ for the corresponding localisations.

The results proven in this section have been obtained independently by González \cite{Gonzalez2012} though using very different, more diagrammatic methods.
\begin{theorem}\label{thm:adjoint gives derived functor}
  Let~$(F\dashv G,\eta,\varepsilon)\colon \Ceu \rightleftarrows \Deu$ be an adjunction such that~$G$ has an absolute total right derived functor~$(\Rbb G,\rho)$. If this in turn has a left adjoint~$(\dot F\dashv \Rbb G,\dot\eta,\dot\varepsilon)$, then~$(\dot F,\lambda)$ is an absolute total left derived functor of~$F$, where~$\lambda \defas \dot\varepsilon_{H_\Deu F}\circ \dot F\rho_F\circ \dot FH_\Ceu\eta$.
\end{theorem}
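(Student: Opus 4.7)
The plan is to verify the universal property via the external characterisation of absolute total left derived functors obtained in section~2 (the Corollary following Proposition~2.8), rather than chasing the 2-dimensional universal property directly. Concretely, I want to produce, for every category~$\Eeu$ and every $X\colon \Ho\Ceu \to \Eeu$ and $Y\colon \Ho\Deu \to \Eeu$, a bijection
\[ \Nat(X,Y\circ\dot F) \cong \Nat(X\circ H_\Ceu, Y\circ H_\Deu\circ F), \]
natural in $\Eeu$,~$X$ and~$Y$, and then check that this family sends $\id_{\dot F}$ to the prescribed counit~$\lambda$.

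The construction of the bijection is obtained by stringing together three ingredients, each already available and each natural in the relevant variables. First, the adjunction $\dot F\dashv\Rbb G$ together with the external characterisation of adjunctions in~\ref{prop:adjunctions and absolute Kan extensions} yields
\[ \Nat(X,Y\circ\dot F) \cong \Nat(X\circ\Rbb G, Y). \]
Second, since $\Rbb G$ is an \emph{absolute} total right derived functor, the (dual form of the) corollary at the end of section~2 provides
\[ \Nat(X\circ\Rbb G, Y) \cong \Nat(X\circ H_\Ceu\circ G, Y\circ H_\Deu). \]
Third, applying the external characterisation of the adjunction $F\dashv G$ to the functors $X\circ H_\Ceu$ and $Y\circ H_\Deu$ (Proposition~\ref{prop:adjunctions and absolute Kan extensions} again, read in reverse) gives
\[ \Nat(X\circ H_\Ceu\circ G, Y\circ H_\Deu) \cong \Nat(X\circ H_\Ceu, Y\circ H_\Deu\circ F). \]
Composing the three bijections produces the desired natural family, and the naturality in $\Eeu$,~$X$,~$Y$ is automatic from the naturality of each factor.

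To identify the counit, I chase $\id_{\dot F}$ through the three steps. The first sends $\id_{\dot F}\mapsto\dot\varepsilon$; the second sends $\dot\varepsilon\mapsto\dot\varepsilon_{H_\Deu}\circ\dot F\rho$; and the third (which, by \ref{prop:adjunctions and absolute Kan extensions}, is given by $\sigma\mapsto\sigma_F\circ(X\circ H_\Ceu)\eta$) sends this to
\[ \bigl(\dot\varepsilon_{H_\Deu}\circ\dot F\rho\bigr)_F\circ\dot FH_\Ceu\eta \;=\; \dot\varepsilon_{H_\Deu F}\circ\dot F\rho_F\circ\dot FH_\Ceu\eta \;=\; \lambda, \]
as required. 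By the cited corollary, this shows that $(\dot F,\lambda)$ is an absolute total left derived functor of~$F$.

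The only step that takes any thought is the middle one: it relies crucially on $\Rbb G$ being \emph{absolute}, since otherwise the bijection with $Y\circ H_\Deu$ on the right (for arbitrary~$Y$) is not available; the non-absolute external characterisation in~\ref{prop:external characterisation of derived functors} would only give us the case $Y=\id$. Apart from this, the argument is a routine bookkeeping of three naturality statements, which is precisely the kind of conceptual proof that motivates the external characterisations set up in section~2.
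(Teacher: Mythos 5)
Your proof is correct and is essentially identical to the paper's: the paper constructs the very same chain of three natural bijections (adjunction $\dot F\dashv\Rbb G$, absoluteness of $\Rbb G$ via the external characterisation, then the adjunction $F\dashv G$) and likewise identifies $\lambda$ by chasing $\id_{\dot F}$ through them. The only difference is notational ($L,E$ in the paper for your $X,Y$).
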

\begin{proof}
  We need a bijection~$\Nat(L,E\circ F') \cong \Nat(L\circ H_\Ceu,E\circ H_\Deu\circ F)$, natural in~$\Eeu$,~$E\colon \Ho\Deu \to \Eeu$ and~$L\colon \Ho\Ceu \to \Eeu$ and can construct one such by
  \setlength\oldarraycolsep\arraycolsep%
  \setlength\arraycolsep{1.4pt}%
  \[\begin{array}{rcl@{\qquad}l}
    \Nat(L, E\circ \dot F) &\cong& \Nat(L\circ\Rbb G, E) & \tau \mapsto E\dot\varepsilon\circ\tau_{\Rbb G} \\
      &\cong& \Nat(L\circ H_\Ceu\circ G, E\circ H_\Deu) & \tau \mapsto \tau_{H_\Deu}\circ L\rho \\
      &\cong& \Nat(L\circ H_\Ceu, E\circ H_\Deu\circ F) & \tau \mapsto \tau_F\circ (L\circ H_\Ceu)\eta.
    \end{array}\]
  \setlength\arraycolsep\oldarraycolsep
  Upon putting $L \defas \dot F$,~$E \defas \id_{\Ho\Deu}$ and chasing~$\id_{\dot F}$ through the bijections, the counit~$\lambda$ has indeed the claimed form.
\end{proof}

\begin{theorem}\label{thm:derived functors are adjoint}
  Let~$(F\dashv G,\eta,\varepsilon)\colon \Ceu \rightleftarrows \Deu$. If~$F$ has an absolute total left derived functor~$(\Lbb F,\lambda)$ and~$G$ has an absolute total right derived functor~$(\Rbb G,\rho)$, then we get an adjunction~$(\Lbb F \dashv \Rbb G,\dot\eta,\dot\varepsilon)$ where~$\dot\eta$ is the unique~$\id_{\Ho\Ceu} \rar \Rbb G\circ\Lbb F$ such that\/~$\Rbb G\lambda\circ\dot\eta_{H_\Ceu} = \rho_F\circ H_\Ceu\eta$ and~$\dot\varepsilon$ is the unique~$\Lbb F\circ\Rbb G \rar \id_{\Ho\Deu}$ such that~$H_\Deu\varepsilon\circ\lambda_G = \dot\varepsilon_{H_\Deu}\circ\Lbb F\rho$.
\end{theorem}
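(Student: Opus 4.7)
The plan is to invoke the external characterisation of adjunctions (Proposition~4.1) by exhibiting a family of bijections
\[ \varphi_{\Eeu,X,Y}\colon \Nat(X,Y\circ\Lbb F) \cong \Nat(X\circ\Rbb G,Y), \]
natural in~$\Eeu$, $X\colon \Ho\Ceu\to\Eeu$ and $Y\colon \Ho\Deu\to\Eeu$, and then chase identities through~$\varphi$ to identify the unit and counit. No direct verification of the triangle identities is needed because Proposition~4.1 packages them into the statement that such a natural family corresponds precisely to an adjunction.

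First, I would assemble~$\varphi$ as the composite of three bijections
\[ \Nat(X,Y\circ\Lbb F) \stackrel{(1)}{\cong} \Nat(X\circ H_\Ceu,Y\circ H_\Deu\circ F) \stackrel{(2)}{\cong} \Nat(X\circ H_\Ceu\circ G,Y\circ H_\Deu) \stackrel{(3)}{\cong} \Nat(X\circ\Rbb G,Y), \]
where~(1) is the external characterisation of~$\Lbb F$ as an absolute total left derived functor, sending $\tau\mapsto Y\lambda\circ\tau_{H_\Ceu}$; where~(2) is the external characterisation of the adjunction~$F\dashv G$ applied with parameter functors $A\defas X\circ H_\Ceu$ and $B\defas Y\circ H_\Deu$, sending $\sigma\mapsto YH_\Deu\varepsilon\circ\sigma_G$ with inverse $\sigma'\mapsto\sigma'_F\circ XH_\Ceu\eta$; and where~(3) is the external characterisation of~$\Rbb G$ as an absolute total right derived functor, sending $\tau'\mapsto\tau'_{H_\Deu}\circ X\rho$. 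Each of these is natural in~$\Eeu$, $X$ and~$Y$ (for piece~(2), naturality in~$A$ and~$B$ specialises to naturality in~$X$ and~$Y$ under whiskering with~$H_\Ceu$,~$H_\Deu$), so the composite~$\varphi$ is natural as well.

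Next I would identify $\dot\eta$ and $\dot\varepsilon$ by the prescription of Proposition~4.1. The unit~$\dot\eta=\varphi^{-1}_{\Ho\Ceu,\id_{\Ho\Ceu},\Rbb G}(\id_{\Rbb G})$ is obtained by running $\id_{\Rbb G}$ backwards through the three bijections: under $(3)^{-1}$ one lands at $\rho\in\Nat(H_\Ceu\circ G,\Rbb G\circ H_\Deu)$; under $(2)^{-1}$ at $\rho_F\circ H_\Ceu\eta\in\Nat(H_\Ceu,\Rbb G\circ H_\Deu\circ F)$; and $(1)^{-1}$ then characterises $\dot\eta$ as the unique transformation $\id_{\Ho\Ceu}\rar\Rbb G\circ\Lbb F$ satisfying $\Rbb G\lambda\circ\dot\eta_{H_\Ceu}=\rho_F\circ H_\Ceu\eta$, exactly as claimed. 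A perfectly dual chase of $\id_{\Lbb F}$ through $\varphi_{\Ho\Deu,\Lbb F,\id_{\Ho\Deu}}$ identifies the counit~$\dot\varepsilon$ as the unique transformation satisfying $\dot\varepsilon_{H_\Deu}\circ\Lbb F\rho=H_\Deu\varepsilon\circ\lambda_G$.

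The only genuine work lies in checking naturality of the composite~$\varphi$ in~$X$ and~$Y$; but since every constituent bijection is given by whiskering with or postcomposing by fixed data ($\lambda$, $\eta$, $\varepsilon$, $\rho$), the naturality squares all reduce to the interchange law for horizontal composition and to the pre-existing naturalities supplied by Sections~2 and~4. Hence this is the main bookkeeping hurdle, but no new idea is required.
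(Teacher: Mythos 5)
Your proposal is correct and follows essentially the same strategy as the paper's own proof: build the required natural family of bijections $\Nat(X,Y\circ\Lbb F)\cong\Nat(X\circ\Rbb G,Y)$ by chaining the external characterisations of $\Lbb F$, of the adjunction $F\dashv G$, and of $\Rbb G$, then invoke Proposition~\ref{prop:adjunctions and absolute Kan extensions} and chase identities to read off $\dot\eta$ and $\dot\varepsilon$. (The paper writes the same chain in the opposite direction, as $\Nat(E'\circ\Rbb G,E)\cong\Nat(E',E\circ\Lbb F)$, and leaves the identity-chase implicit; you just spelled it out, and your minor mislabelling of which direction the formula $\tau'\mapsto\tau'_{H_\Deu}\circ X\rho$ goes in step~(3) doesn't affect the argument since you apply it correctly in the chase.)
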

\begin{proof}
  We need to construct a bijection~$\Nat(E'\circ\Rbb G, E) \cong \Nat(E', E\circ\Lbb F)$ natural in~$\Eeu$, $E'\colon \Ho\Ceu \to \Eeu$ and~$E\colon \Ho\Deu \to \Eeu$. We can do so by
  \setlength\oldarraycolsep\arraycolsep%
  \setlength\arraycolsep{1.4pt}%
  \[\begin{array}{rcl@{\qquad}l@{\quad}l}
    \Nat(E'\circ\Rbb G,E) &\cong& \Nat(E'\circ H_\Ceu\circ G, E\circ H_\Deu) & \sigma \mapsto \sigma_{H_\Deu}\circ E'\rho \\
        &\cong& \Nat(E'\circ H_\Ceu, E\circ H_\Deu\circ F) & \sigma \mapsto \sigma_F\circ E'H_\Ceu\eta, \\
        &&& EH_\Deu\varepsilon\circ\tau_G \mapsfrom\tau \\
        &\cong& \Nat(E', E\circ\Lbb F) & E\lambda\circ\tau_{H_\Ceu} \mapsfrom \tau
    \end{array}\]
  \setlength\arraycolsep\oldarraycolsep
  and it plainly follows that~$\dot\eta$ and~$\dot\varepsilon$ are of the required form.
\end{proof}

The explicit descriptions of the unit and counit in the last theorem is not very enlightening and it might be clearer (although the author is not convinced) to draw the corresponding diagrams:%
\begin{equation}\label{eqn:compatibility of units and counits} \vcenter{ \xymatrix{ H_\Ceu \ar[r]^-{H_\Ceu\eta} \ar[d]_{\dot\eta_{H_\Ceu}} & H_\Ceu\circ G\circ F \ar[d]^{\rho_F} \\ \Rbb G\circ\Lbb F\circ H_\Ceu \ar[r]_-{\Rbb G\lambda} & \Rbb G\circ H_\Deu\circ F} } \quad \vcenter{ \xymatrix{\Lbb F\circ H_\Ceu\circ G \ar[r]^-{\Lbb F\rho} \ar[d]_{\lambda_G} & \Lbb F\circ\Rbb G\circ H_\Deu \ar[d]^{\dot\varepsilon_{H_\Deu}} \\ H_\Deu\circ F\circ G \ar[r]_-{H_\Deu\varepsilon} & H_\Deu & *!<2em,0mm>{.}} } \end{equation}
Even better, in the situation of the first theorem \ref{thm:adjoint gives derived functor}, these two diagrams again commute (by naturality of all arrows involved and the triangle identities). 

\begin{samepage}
\begin{definition}
  By a {\it derived adjunction\/}\index{Adjunction!derived}\index{Derived!adjunction} of~$(F \dashv G,\eta,\varepsilon)\colon \Ceu \rightleftarrows \Deu$ we mean an adjunction~$(\Lbb F \dashv \Rbb G,\dot\eta,\dot\varepsilon)\colon \Ho\Ceu \rightleftarrows \Ho\Deu$ together with transformations~$\lambda$ and~$\rho$ satisfying
  \begin{enumerate}
    \item $(\Lbb F,\lambda)$ is an absolute total left derived functor of~$F$;
    \item $(\Rbb G,\rho)$ is an absolute total right derived functor of~$G$;
    \item $\dot\eta$ and~$\dot\varepsilon$ are the unique natural transformations making the squares \ref{eqn:compatibility of units and counits} commute.
  \end{enumerate}
  If such a derived adjunction exists, we say that~$F\dashv G$ is {\it derivable\/}\index{Derivable!adjunction}\index{Adjunction!derivable}.
\end{definition}
\end{samepage}

With this definition, we can summarise the results of the two theorems above by the following (less precise) corollary.
\begin{corollary}
  Let~$F\dashv G$ be an adjunction of functors between categories with weak equivalences such that an absolute total right derived functor~$\Rbb G$ of~$G$ exists. Then~$F \dashv G$ is derivable if and only if\/~$\Rbb G$ has a left adjoint.\eop
\end{corollary}

  Also, using the theorems, we can easily study the question of when derived functors compose. Unfortunately, they do not in general but at least we can check it on adjoints.
\begin{corollary}
  Let $\Ceu$,~$\Deu$ and~$\Eeu$ be three categories with weak equivalences and
  \[ (F\dashv G,\eta,\varepsilon)\colon \Ceu \rightleftarrows \Deu,\qquad (F'\dashv G',\eta',\varepsilon')\colon \Deu \rightleftarrows \Eeu \]
  with derived adjunctions~$(\Lbb F\dashv\Rbb G,\dot\eta,\dot\varepsilon)$ and~$(\Lbb F'\dashv\Rbb G',\dot\eta',\dot\varepsilon')$. If~$(\Rbb G\circ\Rbb G',\rho'')$ is an absolute total right derived functor of~$G\circ G'$, then~$\Lbb F'\circ\Lbb F$ is an absolute total left derived functor of~$F'\circ F$ with counit
 \[ (\dot\varepsilon'\circ \Lbb F'\dot\varepsilon_{\Rbb G'})_{H_\Eeu F' F}\circ \Lbb F'\Lbb F\rho''_{F' F} \circ \Lbb F'\Lbb FH_\Ceu (G\dot\eta_F\circ\eta). \]
\end{corollary}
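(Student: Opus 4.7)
My plan is to reduce the claim to a direct application of Theorem~\ref{thm:adjoint gives derived functor} for the composite ordinary adjunction $F'\circ F\dashv G\circ G'$. The input required by that theorem is threefold: an original adjunction, an absolute total right derived functor of the right adjoint, and a choice of left adjoint for the latter on homotopy categories. The first is supplied by the ordinary composition, giving $F'\circ F\dashv G\circ G'$ with unit $G\eta'_F\circ\eta$ and counit $\varepsilon'\circ F'\varepsilon_{G'}$. The second is exactly the hypothesis that $(\Rbb G\circ\Rbb G',\rho'')$ is an absolute total right derived functor of $G\circ G'$.

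The third piece I would obtain from Theorem~\ref{thm:derived functors are adjoint} applied to each of the two given derived adjunctions, which yields adjunctions $\Lbb F\dashv\Rbb G$ and $\Lbb F'\dashv\Rbb G'$ on the homotopy categories. Composing these in the ordinary way produces an adjunction $\Lbb F'\circ\Lbb F\dashv\Rbb G\circ\Rbb G'$, whose counit is the horizontal/vertical paste $\dot\varepsilon'\circ\Lbb F'\dot\varepsilon_{\Rbb G'}$ and whose unit is the analogous $\Rbb G\dot\eta'_{\Lbb F}\circ\dot\eta$. This provides the required left adjoint for $\Rbb G\circ\Rbb G'$ and simultaneously identifies the counit $\dot\varepsilon$ that Theorem~\ref{thm:adjoint gives derived functor} asks for.

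With all three ingredients assembled, Theorem~\ref{thm:adjoint gives derived functor} asserts directly that $\Lbb F'\circ\Lbb F$ is an absolute total left derived functor of $F'\circ F$. The explicit counit formula in the statement is then obtained by specialising the counit template $\dot\varepsilon_{H_\Deu F}\circ\dot F\rho_F\circ\dot F H_\Ceu\eta$ of Theorem~\ref{thm:adjoint gives derived functor}: substitute $\dot F\leadsto\Lbb F'\circ\Lbb F$, $\dot\varepsilon\leadsto\dot\varepsilon'\circ\Lbb F'\dot\varepsilon_{\Rbb G'}$, $\rho\leadsto\rho''$, $\eta\leadsto$ (unit of $F'\circ F\dashv G\circ G'$), and $H_\Deu\leadsto H_\Eeu$. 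This substitution immediately produces the displayed expression.

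The proof is almost entirely formal, so there is no serious obstacle. The only point worth emphasising is that the substantive content sits in the hypothesis: derived functors do not compose in general, and the assumption that $\Rbb G\circ\Rbb G'$ is itself a derived functor of $G\circ G'$ is what allows Theorem~\ref{thm:adjoint gives derived functor} to fire. Without that hypothesis one would be forced to construct $\Rbb(G\circ G')$ separately and produce a comparison map to $\Rbb G\circ\Rbb G'$, a step which in our relative\hyph categorical setting is not automatic.
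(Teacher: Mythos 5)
Your proposal is correct and follows the paper's proof exactly: compose the underlying adjunction to $F'\circ F\dashv G\circ G'$, compose the derived adjunction to $\Lbb F'\circ\Lbb F\dashv\Rbb G\circ\Rbb G'$, and feed both into Theorem~\ref{thm:adjoint gives derived functor} together with the hypothesised $\rho''$, yielding the displayed counit by substitution. One tiny redundancy: the adjunctions $\Lbb F\dashv\Rbb G$ and $\Lbb F'\dashv\Rbb G'$ are already given as part of the derived-adjunction data in the hypotheses, so there is no need to invoke Theorem~\ref{thm:derived functors are adjoint} to produce them.
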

\begin{proof}
 Composing the two adjunctions as well as their derived adjunctions yields
  \[ \bigl(F\circ F \dashv G \circ  G', G\eta'_{F}\circ\eta, \varepsilon'\circ F'\varepsilon_{ G'}\bigr)\colon \Ceu \rightleftarrows \Eeu \qquad\text{and} \]
  \[ \bigl(\Lbb F'\circ \Lbb F \dashv \Rbb G \circ \Rbb G', \Rbb G\dot\eta'_{\Lbb F}\circ\dot\eta, \dot\varepsilon'\circ\Lbb F'\dot\varepsilon_{\Rbb  G'}\bigr)\colon \Ho\Ceu \rightleftarrows \Ho\Eeu. \]
  Now if~$(\Rbb G\circ\Rbb G',\rho')$ is an absolute total right derived functor of~$G\circ G'$, then by \ref{thm:adjoint gives derived functor} $\Lbb  F'\circ \Lbb F$ is indeed an absolute total left derived functor of~$ F'\circ F$ with counit
 \[ (\dot\varepsilon'\circ \Lbb F'\dot\varepsilon_{\Rbb G'})_{H_\Eeu F'F}\circ \Lbb F'\Lbb F\rho''_{F'F} \circ \Lbb F'\Lbb FH_\Ceu (G\eta'_F\circ\eta). \]
\end{proof}

As a special instance of this corollary, we can consider the case where~$\rho''$ is the canonical candidate~$\rho'' = \Rbb G\rho'\circ\rho_{G'}$ for a unit of~$\Rbb G\circ\Rbb G'$. It then follows that a counit of the total left derived functor~$\Lbb F'\circ\Lbb F$ is again given by the canonical candidate and vice versa. Note that this result is neither stronger nor weaker than the previous one.
\begin{corollary}\label{cor:composition of derived adjunctions formal}
  In the same situation as in the last corollary, let us write
  \[ (\Lbb F,\lambda),\quad (\Rbb G,\rho),\quad (\Lbb F',\lambda')\quad and\quad (\Rbb G',\rho') \]
  for the absolute total derived functors. Then the composite~$(\Lbb  F'\circ \Lbb F,\lambda'_F\circ\Lbb F'\lambda)$ is an absolute total left derived functor of~$ F'\circ F$ if and only if~$(\Rbb G\circ\Rbb  G',\Rbb G\rho'\circ\rho_{ G'})$ is an absolute total right derived functor of~$G\circ  G'$.
\end{corollary}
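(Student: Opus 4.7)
The plan is to reduce the claim to Theorem~\ref{thm:adjoint gives derived functor} (and its co-dual) after recognising the canonical candidate counit and unit as mates of one another.

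First, I would observe that the compatibility diagrams~(\ref{eqn:compatibility of units and counits}) defining a derived adjunction are nothing but the statement that $\lambda\mate\rho$ with respect to $F\dashv G$ above and $\Lbb F\dashv\Rbb G$ below, in the square with vertical functors $H_\Ceu$ and~$H_\Deu$: by definition of the sharp of a 2\hyph cell one has $\lambda^\sharp=\Rbb G\lambda\circ\dot\eta_{H_\Ceu}$, and the first diagram of~(\ref{eqn:compatibility of units and counits}) says this equals $\rho_F\circ H_\Ceu\eta$, which is exactly the mating condition. Similarly $\lambda'\mate\rho'$. Applying Proposition~\ref{prop:composition of mates}(2) to the horizontal composite of these two mate-pair squares then immediately yields that $\lambda'_F\circ\Lbb F'\lambda$ and $\Rbb G\rho'\circ\rho_{G'}$ are mates for the composite adjunctions $F'F\dashv GG'$ and $\Lbb F'\Lbb F\dashv\Rbb G\Rbb G'$, the latter being the composite of the derived adjunctions from Theorem~\ref{thm:derived functors are adjoint}.

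Next, I would observe that the counit formula $\dot\varepsilon_{H_\Deu F}\circ\dot F\rho_F\circ\dot F H_\Ceu\eta$ appearing in Theorem~\ref{thm:adjoint gives derived functor} is precisely the explicit formula for the left mate ${}^\mate\rho$ given just after~(\ref{eqn:mating bijection}). Hence that theorem can be reformulated as: if $(\Rbb G,\rho)$ is an absolute total right derived functor of $G$ admitting a left adjoint $\dot F$, then $(\dot F,{}^\mate\rho)$ is an absolute total left derived functor of $F$. For the $(\Leftarrow)$ direction of our corollary I would apply this reformulation to the composite adjunction $F'F\dashv GG'$: the hypothesis that $(\Rbb G\Rbb G',\Rbb G\rho'\circ\rho_{G'})$ is an absolute total right derived functor of $GG'$ yields that $(\Lbb F'\Lbb F,{}^\mate(\Rbb G\rho'\circ\rho_{G'}))$ is an absolute total left derived functor of $F'F$, and the mate identification from the previous step identifies this counit as $\lambda'_F\circ\Lbb F'\lambda$. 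The $(\Rightarrow)$ direction is handled dually using the co-dual of Theorem~\ref{thm:adjoint gives derived functor}, which produces $(\Rbb G\Rbb G',(\lambda'_F\circ\Lbb F'\lambda)^\mate)$ as an absolute total right derived functor of $GG'$, and the same mate identification rewrites the unit as $\Rbb G\rho'\circ\rho_{G'}$.

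The hard part is recognising the two identifications (the compatibility diagrams as the mate condition, and the Kan extension counit formula of Theorem~\ref{thm:adjoint gives derived functor} as the explicit mate formula); once both are in place the argument is entirely formal. The alternative -- chasing the explicit counit formula from the previous corollary through repeated applications of compatibility diagrams, naturality of $\dot\eta,\dot\eta',\dot\varepsilon,\dot\varepsilon'$, and the triangle identities -- would yield the same conclusion but at considerable notational expense.
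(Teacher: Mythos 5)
Your proof is correct, and it takes a genuinely cleaner route than the paper's. The paper proves this corollary by plugging $\rho'' \defas \Rbb G\rho'\circ\rho_{G'}$ into the explicit counit formula from the preceding corollary and then performing a long chain of manipulations (naturality of $\dot\varepsilon$, the diagrams~(\ref{eqn:compatibility of units and counits}), naturality of $\lambda,\lambda'$, and a triangle identity) to reduce it to $\lambda'_F\circ\Lbb F'\lambda$. You sidestep that entire calculation by two observations the paper leaves implicit: first, that the compatibility diagrams~(\ref{eqn:compatibility of units and counits}) are exactly the assertion that $\lambda$ and $\rho$ are mates in the square with verticals $H_\Ceu,H_\Deu$ and adjunctions $F\dashv G$ (top) and $\Lbb F\dashv\Rbb G$ (bottom), and likewise for $\lambda',\rho'$; and second, that the counit $\dot\varepsilon_{H_\Deu F}\circ\dot F\rho_F\circ\dot F H_\Ceu\eta$ produced by Theorem~\ref{thm:adjoint gives derived functor} is precisely the explicit left-mate formula given after~(\ref{eqn:mating bijection}). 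With these in hand, Proposition~\ref{prop:composition of mates}(2) on horizontal pasting of mates immediately identifies the composite counit as $\lambda'_F\circ\Lbb F'\lambda$ without any explicit chasing. Both arguments ultimately invoke Theorem~\ref{thm:adjoint gives derived functor} (you directly, the paper via the preceding corollary), so the decomposition is similar; but your route replaces a half-page of ``routine (albeit tedious)'' diagrammatics with a formal appeal to the mates calculus already developed in section~4, and in doing so makes transparent why the canonical candidates on the two sides agree. Your observation that~(\ref{eqn:compatibility of units and counits}) is literally the mate condition is a genuine clarification that could also streamline the proof of Theorem~\ref{thm:derived mates}.
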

\begin{proof}
  If~$(\Rbb G\circ\Rbb  G', \Rbb G\rho'\circ\rho_{ G'})$ is an absolute total right derived functor, we apply the last corollary to $\rho'' \defas \Rbb G\rho'\circ\rho_{ G'}$ and the claim follows by a routine (albeit tedious) calculation:
 \begin{align*}
   \MoveEqLeft[2]\hskip1em\dot\varepsilon'_{H_\Eeu  F' F}\circ \Lbb F'\dot\varepsilon_{\Rbb G'H_\Eeu  F' F}\circ \Lbb  F'\Lbb F\Rbb G\rho'_{ F' F}\circ \Lbb F'\Lbb F\rho_{ G' F' F} \circ \Lbb F'\Lbb FH_\Ceu G\eta'_F \\
   &\hskip27em \circ \Lbb F'\Lbb FH_\Ceu\eta \\
   &\stackrel{\scriptscriptstyle\mathclap{\varepsilon\text{ nat}}}{=}\ \dot\varepsilon'_{H_\Eeu F'F} \circ \Lbb F'\rho_{ F'F} \circ \Lbb F'\dot\varepsilon_{H_\Deu G' F'F} \circ \Lbb F'\Lbb F\rho_{ G' F'F} \circ \Lbb F'\Lbb FH_\Ceu G\eta'_F\\
   &\hskip27em \circ \Lbb F'\Lbb FH_\Ceu\eta \\
   &\stackrel{\scriptscriptstyle\mathclap{\text{\ref{eqn:compatibility of units and counits}}}}{=}\ H_\Eeu\varepsilon'_{ F'F} \circ \lambda'_{ G' F'F} \circ \Lbb F'H_\Deu\varepsilon_{ G' F'F} \circ \Lbb F'\lambda_{G G' F'F} \circ \Lbb F'\Lbb FH_\Ceu G\eta'_F\\
   &\hskip27em\circ \Lbb F'\Lbb FH_\Ceu\eta \\
   &\stackrel{\scriptscriptstyle\mathclap{\lambda\text{ nat}}}{=}\ H_\Eeu\varepsilon'_{ F'F} \circ \lambda'_{ G' F'F} \circ \Lbb F'H_\Deu\varepsilon_{ G' F'F} \circ \Lbb F'H_\Deu F G\eta'_F \circ \Lbb F'H_\Deu F\eta \circ \Lbb F'\lambda \\
   &\stackrel{\scriptscriptstyle\mathclap{\lambda'\text{ nat}}}{=}\ H_\Eeu\varepsilon'_{ F'F} \circ H_\Eeu F'\varepsilon_{ G' F'F} \circ H_\Eeu F' F G\eta'_F \circ H_\Eeu F'F\eta \circ \lambda'_F \circ \Lbb F'\lambda \\
   &\stackrel{\scriptscriptstyle\mathclap{\varepsilon\text{ nat}}}{=}\ H_\Eeu\varepsilon'_{ F'F} \circ H_\Eeu F'\ol\eta_F \circ H_\Eeu F'\varepsilon_{F} \circ H_\Eeu F'F\eta \circ \lambda'_F \circ \Lbb F'\lambda \stackrel{\text{\tiny $\Delta$-id}}{=} \lambda'_F \circ \Lbb F'\lambda.
 \end{align*}
The other direction is dual.
\end{proof}

\begin{definition}
  Let $\Ceu$,~$\Deu$ and~$\Eeu$ be categories equipped with weak equivalences and let $F\colon \Ceu \to \Deu$,~$F'\colon \Deu \to \Eeu$ have absolute total left derived functors~$(\Lbb F,\lambda)$ and~$(\Lbb F',\lambda')$. By abuse of notation, we say that~$\Lbb F$ {\it composes\/}\index{Derived!functor!composing} with~$\Lbb F'$ iff~$(\Lbb F'\circ\Lbb F,\lambda'_F\circ \Lbb F'\lambda)$ is an absolute total left derived functor of~$F'\circ F$. Dually for right derived functors.
\end{definition}

\begin{remark}
  Clearly, this definition is equivalent to requiring that the composite~$F'\circ F$ have a total left derived functor $\bigl(\Lbb(F'\circ F),\lambda''\bigr)$ and that the natural transformation
  \[ \Lbb F' \circ \Lbb F \rar \Lbb(F'\circ F) \qquad\text{induced by $\lambda'_F\circ \Lbb F'\lambda$ be an isomorphism.} \]
\end{remark}

\begin{remark}\label{rem:composition of derived adjunctions}
  With the above definition, the last corollary \ref{cor:composition of derived adjunctions formal} can be restated by saying that for two derived adjunctions, the left adjoints compose iff the right adjoints compose.
\end{remark}

\begin{example}\label{ex:quillen functors compose}
  Obviously, if~$F'$ is homotopical, then~$\Lbb F' \cong \Ho F'$, so that~$\Lbb F$ and~$\Lbb F'$ compose. Also, total left derived functors of left Quillen functors between model categories compose. Dually for right Quillen functors.
\end{example}
\begin{example}
  More generally, let $\Ceu$,~$\Deu$ and~$\Eeu$ be equipped with weak equivalences and $H_\Ceu$, $H_\Deu$,~$H_\Eeu$ their respective localisations. If we are given
  \[ \Ceu \xrightarrow{F} \Deu \xrightarrow{F'} \Eeu \]
  together with a left $F$\hyph deformation retract~$\Ceu_0 \subseteq \Ceu$ and a left $F'$\hyph deformation retract~$\Deu_0 \subseteq \Deu$ such that~$F\Ceu_0 \subseteq \Deu_0$, then $\Lbb F'$ and~$\Lbb F$ compose.
\end{example}
\begin{proof}
  Let us write $\ol F \defas \ol{F\vert_{\Ceu_0}}$,~$\ol F' \defas \ol{F'\vert_{\Deu_0}}$ and fix some left deformation retractions $(\Ceu_0,Q,q)$, $(\Deu_0,Q',q')$, so that we may choose 
  \[ \Lbb F = \ol F\circ\widetilde Q,\ \Lbb F' = \ol F'\circ\widetilde Q' \qquad\text{with counits}\qquad \lambda \defas H_\Deu Fq,\ \lambda' \defas H_\Eeu F'q'. \]
  By hypothesis~$(\Ceu_0,Q,q)$ is a left $(F'F)$\hyph deformation retraction and~$\ol{F'F} \defas \ol{F'F\vert_{\Ceu_0}}$ comes with a natural isomorphism~$\omega\colon \Lbb F'\circ\Lbb F \cong \ol{F'F}\circ\widetilde Q$ determined by the components
  \[ \omega_{H_\Ceu C}\colon \Lbb F'\Lbb FH_\Ceu C = H_\Eeu F'Q'FQC \xrightarrow{H_\Eeu F'q'_{FQC}} H_\Eeu F'FQC \qquad\text{for }C \in \Ob\Ceu. \]
  Moreover, $\omega$ is compatible with the claimed counit of~$\Lbb F'\circ\Lbb F$ in the sense that
  \[ H_\Eeu F'Fq_C\circ\omega_{H_\Ceu C} = H_\Eeu F'(Fq_C\circ q'_{FQC}) = H_\Eeu F'(q'_{FC}\circ Q'Fq_C) = \lambda'_{FC}\circ \Lbb F'\lambda_C. \]
\end{proof}

\begin{example}
  As a special instance of the previous example, that is relevant to our main topic of the pointwiseness of homotopy colimits is the following. Let~$\Ceu$ is a category with weak equivalences, $\Ieu$,~$\Jeu$ index categories and $\Deu \subseteq \Ceu^\Ieu$ is a left $\colim$-deformation retract with a functorial replacement $Q\colon \Ceu^\Ieu \to \Deu$ (i.e.~a left deformation retract in the sense of \cite{DHKS}). Then~$\Lbb\colim_\Ieu \cong \Ho({\colim_\Ieu}\circ Q)$ and for every $J\in\Ob\Jeu$, the evaluation functor $\Ho\ev_J\colon \Ceu^{\Ieu\times\Jeu} \to \Ceu^\Ieu$ composes with $\Lbb\colim_\Ieu\colon \Ceu^\Ieu \to \Ceu$. To wit, as observed in \ref{rem:functorial deformations}, we can just apply~$Q$ pointwise, yielding a replacement functor $Q^\Jeu\colon \Ceu^{\Ieu\times\Jeu} \to \Deu^\Jeu$. With this
  \[ {\Lbb\colim_\Ieu}\circ\Ho\ev_J \cong \Ho({\colim_\Ieu}\circ Q\circ\ev_J) = \Ho({\colim_\Ieu}\circ\ev_J\circ Q^\Jeu) \cong \Lbb({\colim_\Ieu}\circ\ev_J). \]
\end{example}

Finally, we will quickly go one dimension higher and study the connection between mates and derivable adjunctions. For this, we consider two derivable adjunctions $F \dashv G$,~$F' \dashv G'$ (as always writing $\lambda$, $\lambda'$,~$\rho$ and~$\rho'$ for the universal morphisms of $\Lbb F$, $\Lbb F'$,~$\Rbb G$ and~$\Rbb G'$) together with two homotopical functors~$X$ and~$Y$ as in the following diagram:
\[ \vcenter{\xymatrix{ \Ceu \ar@<1ex>[r]^-{F}^-{}="1" \ar[d]_{X} & \Deu \ar@<1ex>[l]^-{G}^-{}="2" \ar[d]^{Y} \\ \Ceu' \ar@<1ex>[r]^-{F'}^-{}="11" & \ar@<1ex>[l]^-{G'}^-{}="12" \Deu' \ar@{}"1";"2"|(.3){\hbox{}}="3"|(.7){\hbox{}}="4" \ar@{|-}"4";"3" \ar@{}"11";"12"|(.3){\hbox{}}="13"|(.7){\hbox{}}="14" \ar@{|-}"14";"13" }}
\qquad\text{yielding}\qquad
\vcenter{\xymatrix{ \Ho\Ceu \ar@<1ex>[r]^-{\Lbb F}^-{}="1" \ar[d]_{\Ho X} & \Ho\Deu \ar@<1ex>[l]^-{\Rbb G}^-{}="2" \ar[d]^{\Ho Y} \\ \Ho\Ceu' \ar@<1ex>[r]^-{\Lbb F'}^-{}="11" & \ar@<1ex>[l]^-{\Rbb G'}^-{}="12" \Ho\Deu' & *!<2em,1ex>{.} \ar@{}"1";"2"|(.3){\hbox{}}="3"|(.7){\hbox{}}="4" \ar@{|-}"4";"3" \ar@{}"11";"12"|(.3){\hbox{}}="13"|(.7){\hbox{}}="14" \ar@{|-}"14";"13" }} \]
In addition, we require~$\Ho X$ to compose with~$\Lbb F'$ and~$\Ho Y$ to compose with~$\Rbb G'$ (all other reasonable combinations automatically compose as remarked in \ref{ex:quillen functors compose}).
\begin{theorem}\label{thm:derived mates}
  In the above situation, if~$\sigma\colon F'X \rar YF$ and~$\tau\colon XG \rar G'Y$ are mates, then the induced transformations between the derived functors
  \[ \Lbb\sigma\colon \Lbb F'\circ \Ho X \rar \Ho Y\circ\Lbb F \qquad\text{and}\qquad \Rbb\tau\colon \Ho X\circ \Rbb G \rar \Rbb G'\circ\Ho Y \]
  are mates, too (note that for this to even make sense, we need that~$\Ho X$ composes with~$\Lbb F'$ and that~$\Ho Y$ composes with~$\Rbb G'$). In particular, for $\Ceu = \Ceu'$,~$\Deu = \Deu'$ and $X$,~$Y$ identities, if~$\sigma$ and~$\tau$ are conjugate, so are~$\Lbb\sigma$ and~$\Rbb\tau$.
\end{theorem}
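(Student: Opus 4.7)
The plan is to verify $\Lbb\sigma\mate\Rbb\tau$ via condition (2) of the characterisation of mates, which reduces the claim to the single pasting identity
\[ \Rbb G'(\Lbb\sigma)\circ\dot\eta'_{\Ho X} \;=\; (\Rbb\tau)_{\Lbb F}\circ\Ho X(\dot\eta) \]
in $\Nat(\Ho X,\Rbb G'\circ\Ho Y\circ\Lbb F)$. Two universal reductions simplify this drastically. First, by~\ref{prop:strict localisation is weak}, precomposition with $H_\Ceu$ is fully faithful, so it suffices to check equality after whiskering on the right with $H_\Ceu$. Second, by absoluteness of $\Lbb F$, the composite $(\Rbb G'\circ\Ho Y\circ\Lbb F,\,\Rbb G'\Ho Y\lambda)$ is a right Kan extension of $\Rbb G'\circ H_{\Deu'}\circ Y\circ F$ along $H_\Ceu$, so postcomposition with $\Rbb G'\Ho Y\lambda$ is injective on the relevant hom-class. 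Taken together, I only need to compare both sides after whiskering by $H_\Ceu$ on the right and by $\Rbb G'\Ho Y\lambda$ on the left, as transformations $H_{\Ceu'}X\rar\Rbb G'H_{\Deu'}YF$ expressible purely in the primitive data $\eta,\eta',\lambda,\lambda',\rho,\rho',\sigma,\tau$.

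Next I reduce each side separately to the common expression $\rho'_{YF}\circ H_{\Ceu'}(\tau_F\circ X\eta)$, which by the mate hypothesis $\sigma\mate\tau$ equals $\rho'_{YF}\circ H_{\Ceu'}(G'\sigma\circ\eta'_X)$. On the left-hand side I apply, in order: the defining equation $\Ho Y\lambda\circ\Lbb\sigma_{H_\Ceu} = H_{\Deu'}\sigma\circ\lambda'_X$ of $\Lbb\sigma$; the compatibility square~\ref{eqn:compatibility of units and counits} for $\Lbb F'\dashv\Rbb G'$ whiskered by $X$, which turns $\Rbb G'\lambda'_X\circ\dot\eta'_{H_{\Ceu'}X}$ into $\rho'_{F'X}\circ H_{\Ceu'}\eta'_X$; and finally naturality of $\rho'$ at $\sigma$. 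On the right-hand side I apply, in order: naturality of $\Rbb\tau$ at the morphism $\lambda\colon \Lbb F H_\Ceu\rar H_\Deu F$ in $\Ho\Deu$, which commutes $\Rbb G'\Ho Y\lambda$ past $\Rbb\tau_{\Lbb F}$ at the cost of $\Ho X\Rbb G\lambda$ on the far right; the compatibility square~\ref{eqn:compatibility of units and counits} for $\Lbb F\dashv\Rbb G$, which rewrites $\Rbb G\lambda\circ\dot\eta_{H_\Ceu}$ as $\rho_F\circ H_\Ceu\eta$; and the defining equation of $\Rbb\tau$ whiskered by $F$.

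The main obstacle is the second reduction: one must recognise that although $\Rbb G'$ and $\Ho Y$ are not homotopical in any useful sense, absoluteness of $\Lbb F$ nevertheless ensures that $\Rbb G'\Ho Y\lambda$ serves as the counit of a right Kan extension and is thus "cancellable" by the universal property. Once this is in place the rest is routine bookkeeping of six pasting identities. The second, degenerate assertion of the theorem, that $\Lbb\sigma$ and $\Rbb\tau$ are conjugate when $\Ceu=\Ceu'$, $\Deu=\Deu'$ and $X,Y$ are identities, is then simply the special case of what has just been proved.
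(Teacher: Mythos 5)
Your proposal is correct and follows essentially the same approach as the paper: both reduce the pasting identity $\Rbb G'\Lbb\sigma\circ\dot\eta'_{\Ho X} = (\Rbb\tau)_{\Lbb F}\circ\Ho X\dot\eta$ by precomposing with $H_\Ceu$ and postcomposing with $\Rbb G'\Ho Y\lambda$ (justified by~\ref{prop:strict localisation is weak} and absoluteness of~$\Lbb F$), then compare both sides via the defining equations of~$\Lbb\sigma$ and~$\Rbb\tau$, the compatibility squares~\ref{eqn:compatibility of units and counits}, the mate hypothesis $G'\sigma\circ\eta'_X = \tau_F\circ X\eta$, and naturality. The paper simply arranges this as a single chain of equalities rather than reducing each side separately to the common middle expression $\rho'_{YF}\circ H_{\Ceu'}(\tau_F\circ X\eta)$, but the underlying argument is identical.
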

\begin{proof}
  First recall that the composite counits of~$\Lbb F'\circ\Ho X$ and~$\Ho Y\circ\Lbb F$ are respectively~$\lambda'_{X}$ and~$(\Ho Y)\lambda$ while the composite units of~$\Ho X\circ\Rbb G$ and~$\Rbb G'\circ\Ho Y$ are~$(\Ho X)\rho$ and~$\rho'_{Y}$. It follows that~$\Lbb\sigma$ and~$\Rbb\tau$ are defined by the equations
  \[ (\Ho Y)\lambda\circ(\Lbb\sigma)_{H_\Ceu} = H_{\Deu'}\sigma\circ\lambda'_X \qquad\text{and}\qquad (\Rbb\tau)_{H_\Deu} \circ (\Ho X)\rho = \rho'_{Y} \circ H_{\Ceu'}\tau. \]
  We now need to check that~$(\Lbb\sigma)^\sharp = \Rbb G' \Lbb\sigma\circ\dot\eta'_{\Ho X} = (\Rbb\tau)_{\Lbb F}\circ X\dot\eta$, which are natural transformations~$\Ho X \rar \Rbb G'\circ\Ho Y\circ \Lbb F$. By the universal property of a localisation and~$\Lbb F$ being an absolute total left derived functor, it suffices to check this after precomposition with~$H_\Ceu$ and postcomposition with~$(\Rbb G'\circ\Ho Y)\lambda$, where then
  \begin{align*}
    \MoveEqLeft (\Rbb G'\circ\Ho Y)\lambda\circ (\Rbb G'\Lbb\sigma\circ\dot\eta'_{\Ho X})_{H_\Ceu} = \Rbb G'\bigl((\Ho Y)\lambda\circ\Lbb\sigma_{H_\Ceu})\circ\dot\eta'_{H_{\Ceu'}X} \\
    &= \Rbb G'(H_{\Deu'}\sigma\circ\lambda'_X)\circ\dot\eta'_{H_{\Ceu'}X} = \Rbb G'H_{\Deu'}\sigma\circ(\Rbb G'\lambda'\circ\dot\eta'_{H_{\Ceu'}})_X \\
    &= \Rbb G'H_{\Deu'}\sigma\circ(\rho'_{F'}\circ H_{\Ceu'}\eta')_X = \Rbb G'H_{\Deu'}\sigma\circ\rho'_{F'X}\circ H_{\Ceu'}\eta'_X \\
    &= \rho'_{YF}\circ H_{\Ceu'}G'\sigma\circ H_{\Ceu'}\eta'_X = \rho'_{YF}\circ H_{\Ceu'}(G'\sigma\circ\eta'_X) \\
    &= \rho'_{YF}\circ H_{\Ceu'}(\tau_F\circ X\eta) = (\rho'_Y\circ H_{\Ceu'}\tau)_F\circ H_{\Ceu'}X\eta \\
    &= (\Rbb\tau)_{H_\Deu F}\circ (\Ho X)\rho_F \circ (\Ho X)H_\Ceu\eta = (\Rbb\tau)_{H_\Deu F}\circ(\Ho X)(\rho_F\circ H_\Ceu\eta) \\
    &= (\Rbb\tau)_{H_\Deu F}\circ (\Ho X)(\Rbb G\lambda\circ\dot\eta_{H_\Ceu}) = (\Rbb\tau)_{H_\Deu F}\circ(\Ho X\circ\Rbb G)\lambda\circ(\Ho X)\dot\eta_{H_\Ceu} \\
    &= (\Rbb G'\circ\Ho Y)\lambda\circ(\Rbb\tau)_{\Lbb F H_\Ceu}\circ(\Ho X)\dot\eta_{H_\Ceu} \\
    &= \Rbb G'(\Ho Y)\lambda\circ\bigl((\Rbb\tau)_{\Lbb F}\circ(\Ho X)\dot\eta)_{H_\Ceu} \\
  \end{align*}
\end{proof}

\section{Two Notions of Homotopy Colimits}
As an application of our theorems, we are going to show that homotopy colimits in the sense of derivator theory are the same as homotopy colimits in the Quillen model category sense. To make this precise, let us fix the following convention for convenience.
\begin{convention}
  Whenever we have a category~$\Ceu$ equipped with a class of weak equivalences as well as some index category~$\Ieu$ we always equip the diagram category~$\Ceu^\Ieu$ with the class of pointwise weak equivalences. That is to say, a natural transformation~$\tau\colon X \rar Y$ of diagrams $X$,~$Y\colon \Ieu \to \Ceu$ is a weak equivalence iff~$\tau_I\colon X_I \to Y_I$ is a weak equivalence in~$\Ceu$ for all~$I \in \Ob\Ieu$.
\end{convention}

With this natural definition of weak equivalences in a diagram category, one has the problem that in general, colimit and limit functors do not preserve them. The classical example is given by the commutative diagram in~$\Top$
\[ \xymatrix@R=2em{ D^n \ar[d] & S^{n-1} \ar@{_c->}[l] \ar@{^c->}[r] \ar@{=}[d] & D^n \ar[d] \\ \ast & S^{n-1} \ar@{_c->}[l] \ar@{^c->}[r] & \ast & *!<7mm,0mm>{,} } \]
where all vertical arrows are (weak) homotopy equivalences but the pushout of the top row is~$S^n$, while the one of the bottom row is~$\ast$ and these spaces are not (weakly) homotopy equivalent. So, in general, we cannot form~$\Ho(\colim)$ and the best we can do is trying to take a derived functor.%
\begin{observation}
  The constant diagram functor~$\Delta\colon \Ceu \to \Ceu^\Ieu$ is homotopical and thus induces a unique~$\Ho\Delta\colon \Ho\Ceu \to \Ho(\Ceu^\Ieu)$ such that $\Ho\Delta\circ H_\Ceu = H_{\Ceu^\Ieu}\circ\Delta$ (where~$H_\Ceu$ and~$H_{\Ceu^\Ieu}$ are the corresponding localisations).
\end{observation}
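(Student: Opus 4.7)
The statement is essentially a direct application of the universal property of the (strict) localisation to the functor $\Delta$, so the plan is really just to verify that $\Delta$ sends weak equivalences to weak equivalences and then invoke the universal property.

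First I would unpack what it means for $\Delta$ to be homotopical under the convention just fixed. A morphism $f\colon C \to C'$ in $\Ceu$ gets sent to the natural transformation $\Delta f\colon \Delta C \rar \Delta C'$ whose component at every $I \in \Ob\Ieu$ is simply $f$ itself. By the convention, a transformation in $\Ceu^\Ieu$ is a weak equivalence iff each of its components is one in~$\Ceu$, so if $f$ is a weak equivalence, then so are all the components of $\Delta f$, and hence $\Delta f$ is a weak equivalence in $\Ceu^\Ieu$. This shows $\Delta$ is homotopical.

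Next, composing with the localisation $H_{\Ceu^\Ieu}\colon \Ceu^\Ieu \to \Ho(\Ceu^\Ieu)$, the functor $H_{\Ceu^\Ieu}\circ\Delta\colon \Ceu \to \Ho(\Ceu^\Ieu)$ sends weak equivalences in $\Ceu$ to isomorphisms. The strict universal property of $H_\Ceu\colon \Ceu \to \Ho\Ceu$ then yields a unique functor $\Ho\Delta\colon \Ho\Ceu \to \Ho(\Ceu^\Ieu)$ satisfying the required equality $\Ho\Delta\circ H_\Ceu = H_{\Ceu^\Ieu}\circ\Delta$.

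There is no real obstacle here; the entire content is that pointwise weak equivalences are exactly what is needed to make $\Delta$ homotopical, after which the construction of $\Ho\Delta$ is automatic. I would write this up as a single short paragraph rather than a display-heavy proof.
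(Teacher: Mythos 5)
Your argument is correct and is exactly the (implicit) reasoning behind the paper's observation, which is stated without proof: pointwise weak equivalences make $\Delta$ homotopical because $\Delta f$ has every component equal to $f$, and then the strict universal property of $H_\Ceu$ applied to $H_{\Ceu^\Ieu}\circ\Delta$ yields the unique $\Ho\Delta$. Nothing to add.
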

\begin{definition}
  For~$\Ceu$ a category equipped with weak equivalences and~$\Ieu$ an index category, a {\it homotopy colimit functor\/}\index{Homotopy!colimit!Grothendieck}\index{Homotopy!colimit}\index{Colimit!homotopy} is a functor~$\hocolim_\Ieu\colon \Ho(\Ceu^\Ieu) \to \Ho\Ceu$ left adjoint to~$\Ho\Delta$. If~$\Ceu$ has $\Ieu$\hyph colimits (so that~$\Delta\colon \Ceu \to \Ceu^\Ieu$ has a left adjoint~$\colim$) a {\it derived colimit functor\/}\index{Homotopy!colimit!Quillen} is an absolute total left derived functor~$\Lbb\colim_\Ieu$ of~$\colim_\Ieu$.
\end{definition}
\begin{proposition}
  If~$\Ceu$ has $\Ieu$\hyph colimits, then a homotopy colimit functor exists iff a derived colimit functor exists and they are the same.
\end{proposition}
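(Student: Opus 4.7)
The key observation is that the functor $\Delta\colon \Ceu \to \Ceu^\Ieu$ is homotopical, so by the example in section~2 about homotopical functors, $(\Ho\Delta, \id)$ is an absolute total right derived functor of~$\Delta$. In other words, $\Rbb\Delta = \Ho\Delta$ exists with unit the identity. This is the crucial input.

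With this in hand, the plan is simply to apply Corollary~6.4 to the adjunction $\colim_\Ieu \dashv \Delta$, which, together with a direct appeal to Theorems~6.1 and~6.2, gives both directions. For the ``only if'' direction, suppose a homotopy colimit functor $\hocolim_\Ieu$ exists, i.e., $\hocolim_\Ieu$ is left adjoint to $\Ho\Delta = \Rbb\Delta$. Then Theorem~6.1 (``an adjoint of an absolute derived functor is itself an absolute derived functor'') applied to the adjunction $\hocolim_\Ieu \dashv \Rbb\Delta$ produces an absolute total left derived functor~$\Lbb\colim_\Ieu$ of~$\colim_\Ieu$ (namely~$\hocolim_\Ieu$ itself, up to canonical isomorphism), which is exactly a derived colimit functor.

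Conversely, if a derived colimit functor $\Lbb\colim_\Ieu$ exists, then we have absolute total derived functors of both sides of the adjunction $\colim_\Ieu \dashv \Delta$, so Theorem~6.2 supplies a derived adjunction $\Lbb\colim_\Ieu \dashv \Rbb\Delta = \Ho\Delta$. Hence $\Lbb\colim_\Ieu$ is left adjoint to $\Ho\Delta$, i.e., a homotopy colimit functor.

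The two constructions match: in both directions the left adjoint to $\Ho\Delta$ produced is (canonically isomorphic to) the absolute total left derived functor of $\colim_\Ieu$. I don't anticipate any real obstacle—all the work has already been done in section~6, and the proof is essentially a one-line invocation of Corollary~6.4 once one recognises that $\Rbb\Delta = \Ho\Delta$. The only thing worth being slightly careful about is tracking that the left adjoint one obtains from Theorem~6.1 is genuinely (isomorphic to) the same functor as the one produced by Theorem~6.2 in the other direction; but this is immediate from the uniqueness up to unique isomorphism of absolute total left derived functors.
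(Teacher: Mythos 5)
Your proof is correct and is exactly the paper's argument spelled out in detail: the paper's one-line proof ``$\Delta$ is homotopical, so $\Rbb\Delta = \Ho\Delta$ exists'' implicitly relies on precisely the section~6 machinery (Theorems \ref{thm:adjoint gives derived functor}, \ref{thm:derived functors are adjoint} and the corollary following them) that you invoke explicitly. No difference in substance, only in verbosity.
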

\begin{proof}
  The functor~$\Delta\colon \Ceu \to \Ceu^\Ieu$ is homotopical and so~$\Rbb\Delta = \Ho\Delta$ exists.
\end{proof}

Although, by this result the existence of homotopy colimits is weaker than the existence of derived colimits (one does not need the existence of strict colimits), we prefer the more restrictive notion for what follows.%
\begin{definition}
  We say that a category~$\Ceu$ with weak equivalences {\it has homotopy colimits of type~$\Ieu$\/} (or simply that it has {\it $\Ieu$\hyph homotopy colimits\/}) iff the adjunction~$\colim_\Ieu\colon \Ceu^\Ieu \rightleftarrows \Ceu\lon\Delta$ exists and is derivable. Also, when writing ``$\hocolim_\Ieu$'', we always mean ``$\Lbb\colim_\Ieu$''.
\end{definition}

\section{Evaluation and Endomorphisms}
To say that colimits in a diagram category~$\Ceu^\Jeu$ are calculated pointwise means that
\[ {\colim}\circ\ev_J = \ev_J\circ{\colim} \text{ (or rather that there is a natural isomorphism)} \]
for all $J \in \Ob\Jeu$, where~$\ev_J$ are evaluation functors. So in order to prove a similar statement for homotopy colimits, we should try to better understand these.
\begin{proposition}\label{prop:adjoints of evaluation}
  Let~$\Ceu$ be a category and~$\Jeu$ an index category. For~$J \in \Ob\Jeu$ with inclusion~$\Stdin_J\colon \{J\} \hookrightarrow \Jeu$, the evaluation functor~$\ev_J\colon \Ceu^\Jeu \to \Ceu$ has a right adjoint
  \[ J_*\colon \Ceu \to \Ceu^\Jeu \qquad\text{given by}\qquad J_*C = C^{\Jeu(-,J)} \]
  (with the obvious arrow map), granted all these powers in~$\Ceu$ exist. Moreover, if\/ $\End_\Jeu(J) = \{\id_J\}$, then $J_*$ is fully faithful or equivalently, a counit~$\varepsilon$ of the adjunction is invertible. Dually, $\ev_J$ has a left adjoint
\[ J_!\colon \Ceu \to \Ceu^\Jeu \qquad\text{given by}\qquad J_!C = \Jeu(J,-)\cdot C, \]
granted all these copowers exist and if\/ $\End_\Jeu(J) = \{\id_J\}$, then~$J_!$ is fully faithful.
\end{proposition}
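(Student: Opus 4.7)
The plan is to exhibit the adjunction $\ev_J \dashv J_*$ directly by producing a natural bijection
\[ \Ceu^\Jeu(X, J_*C) \cong \Ceu(X_J, C) = \Ceu(\ev_J X, C), \]
and then to deduce the fully faithful claim from inspection of the counit. Concretely, I unpack what a natural transformation $\tau\colon X \rar J_*C$ consists of: a family of morphisms $\tau_{J'}\colon X_{J'} \to C^{\Jeu(J',J)}$, which by the universal property of the power corresponds to a family $(\tau_{J',f}\colon X_{J'} \to C)_{f\colon J' \to J}$. Naturality of $\tau$ with respect to an arrow $g\colon J' \to J''$ (together with the action of $J_*$ on arrows, which is precomposition $g^*$) amounts to the equations $\tau_{J',h\circ g} = \tau_{J'',h}\circ X_g$ for all $h\colon J''\to J$.

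Setting $J'' = J$ and $h = \id_J$ forces $\tau_{J',f} = \tau_{J,\id_J}\circ X_f$, so $\tau$ is determined by the single component $\tau_{J,\id_J}\colon X_J \to C$. Conversely, for any $\varphi\colon X_J \to C$, the formula $\tau_{J',f} \defas \varphi\circ X_f$ assembles into a natural transformation (checking the naturality equations above reduces to functoriality of $X$). This gives the desired bijection, and naturality in both $X$ and $C$ is built into the construction.

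For the fully faithful claim, recall that a right adjoint is fully faithful iff the counit of the adjunction is an isomorphism. Here $\ev_J(J_*C) = C^{\Jeu(J,J)} = C^{\End_\Jeu(J)}$, and under the hypothesis $\End_\Jeu(J) = \{\id_J\}$, this power is the canonical power over a singleton and is therefore canonically isomorphic to $C$; unwinding the bijection above shows that this canonical isomorphism is precisely the counit at $C$.

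The dual statement for $J_!$ is obtained by reversing arrows throughout: a natural transformation $J_!C \rar Y$ is determined by its component $C \to Y_J$ at $\id_J$, yielding $\Ceu^\Jeu(J_!C, Y) \cong \Ceu(C, \ev_J Y)$, and the unit of $J_! \dashv \ev_J$ at $C$ is the canonical morphism $C \cong \{\id_J\}\cdot C \to \Jeu(J,J)\cdot C = \ev_J(J_!C)$, which is an isomorphism exactly when $\End_\Jeu(J) = \{\id_J\}$. There is no real obstacle in this proposition; the only point requiring minor care is bookkeeping the direction of naturality (in $J'$) that enforces uniqueness of the extension from the $\id_J$-component, and cleanly separating the covariant statement for $J_*$ from its op-dual for $J_!$.
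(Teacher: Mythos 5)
Your argument is correct, and it takes a genuinely different route from the paper's. The paper proceeds top-down: it observes that $\ev_J = \Stdin_J^*$ is precomposition with the inclusion $\Stdin_J\colon \{J\} \hookrightarrow \Jeu$, so that (when the relevant limits exist) its right adjoint is the pointwise right Kan extension along $\Stdin_J$, given by $J_*C = \lim\bigl(-\comma\Stdin_J \to \{C\} \hookrightarrow \Ceu\bigr)$; it then identifies the comma category $J'\comma\Stdin_J$ as the discrete category on $\Jeu(J',J)$, recovering the power formula, and reads off the counit at $C$ as $\pr_{\id_J}\colon C^{\End_\Jeu(J)} \to C$. You instead take the formula $J_*C = C^{\Jeu(-,J)}$ as the starting point and verify the adjunction by hand: a natural transformation $X \rar J_*C$ is determined by its $\id_J$-component at $J$ (a Yoneda-style argument using naturality with $J''=J$, $h=\id_J$), and conversely every $\varphi\colon X_J\to C$ extends uniquely. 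Both arguments correctly pin down the counit as projection onto the $\id_J$-factor, from which the fully-faithfulness claim follows in the same way. The paper's version is shorter if the reader already knows the pointwise Kan-extension/limit-over-comma-category formula; your version is more elementary and self-contained, bypassing Kan extensions entirely, at the cost of a slightly longer explicit computation. Both are sound, and your treatment of the dual statement is appropriate.
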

\begin{proof}
  Note that~$\ev_J = J^*$ is given by precomposition with~$\Stdin_J\colon \{J\} \hookrightarrow \Jeu$ and thus has a right adjoint given by taking the pointwise right Kan extension along~$J$ (granted that all these exist)
  \[ C \mapsto J_*C = \lim(-\comma \Stdin_J \to \{C\} \hookrightarrow \Ceu), \]
  so that~$(J_*C)_{J'} = C^{\pi_0(J'\comma\Stdin_J)}$ is the limit of a constant diagram for~$J' \in \Ob\Jeu$. But $J'\comma\Stdin_J$ is a discrete category with objects~$\Jeu(J',J)$. The counit's component at $C \in \Ob\Ceu$ is $\pr_{\id_J}\colon C^{\End_\Jeu(J)} \to C$, which is invertible if~$\End_\Jeu(J) = \{\id_J\}$.
\end{proof}

\begin{example}
  For~$\Jeu$ any direct or inverse category (in the sense of the theory of Reedy categories), then~$J_!$ and~$J_*$ (assuming they exist) are fully faithful for all $J\in\Ob\Jeu$. In particular, this is true for the indexing categories of the most common homotopy colimits, such as homotopy pushouts, coproducts, telescopes and coequalisers.
\end{example}

\begin{observation}\label{obs:beck-chevalley and evaluation}
  Given a category~$\Ceu$, index categories $\Ieu$,~$\Jeu$ and an object~$J \in \Ob\Jeu$ such that $J_*\colon \Ceu \to \Ceu^\Jeu$ as above exists, then so does $J_*\colon \Ceu^\Ieu \to (\Ceu^\Ieu)^\Jeu \cong \Ceu^{\Ieu\times\Jeu}$ (all powers are calculated pointwise) and an easy calculation shows that
  \[ \xymatrix{ \Ceu^\Jeu \ar[r]^-{\ev_J} \ar[d]_-{\Delta} & \Ceu \ar[d]^-{\Delta} \\ \Ceu^{\Ieu\times\Jeu} \ar[r]_{\ev_J} & \Ceu^\Ieu } \]
  (filled with the identity) satisfies the dual Beck-Chevalley condition. Indeed, the identity's mate is again the identity, for which we only need to check that~$\Delta\varepsilon = \varepsilon'_\Delta$, where~$\varepsilon$ is the counit of the top adjunction and~$\varepsilon'$ the counit of the bottom one. So let $C \in \Ob\Ceu$, $I \in \Ob\Ieu$ and then
  \[ (\varepsilon'_{\Delta C})_{I} = (\pr_{\id_J})_I\colon \bigl((\Delta C)^{\Jeu(J,J)}\bigr)_I = \bigl((\Delta C)I\bigr)^{\Jeu(J,J)} = C^{\Jeu(J,J)} \xrightarrow{\pr_{\id_J}} C = (\Delta C)_I \]
  while $(\Delta\varepsilon_C)_I = \varepsilon_C = \pr_{\id_J}$. If a right adjoint~$J_*$ to~$\ev_J$ exists but is not calculated pointwise as in the proposition, then the square still satisfies the dual Beck-Chevalley condition, granted that~$\Ceu$ has $\Jeu$\hyph colimits. This follows from Beck-Chevalley interchange and \ref{ex:beck-chevalley for colimits}.
\end{observation}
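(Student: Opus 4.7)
The observation contains two claims about the displayed square, filled by the identity $\sigma\colon \ev_J\Delta \rar \Delta\ev_J$: a pointwise case, where $J_*$ agrees with the formula of Proposition~\ref{prop:adjoints of evaluation}, and a general case, where $J_*$ is merely assumed to exist. For the pointwise case, I would argue by direct calculation. The pointwise formula $J_*C = C^{\Jeu(-,J)}$ makes $\Delta$ and $J_*$ commute on the nose, so $\Delta\circ J_*$ and $J_*\circ\Delta$ are literally the same functor $\Ceu \to \Ceu^{\Ieu\times\Jeu}$. Computing the mate $\tau$ of $\sigma = \id$ via the explicit formula from Section~4 reduces componentwise to checking $\Delta\varepsilon = \varepsilon'_\Delta$, where $\varepsilon$, $\varepsilon'$ are the counits of the two horizontal adjunctions; this is a routine pointwise verification since both sides are given by the projection $\pr_{\id_J}\colon C^{\Jeu(J,J)} \to C$ (applied constantly in the $\Ieu$-direction on the right).

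For the general case I would invoke Beck-Chevalley interchange in tandem with Example~\ref{ex:beck-chevalley for colimits}. Under the stated colimit hypothesis on $\Ceu$, both vertical $\Delta$'s admit left adjoints $\colim_\Ieu$, and combined with the horizontal adjunctions $\ev_J \dashv J_*$ the square has all four functors adjoined. The identity transformation $\tau\colon \ev_J\Delta = \Delta\ev_J$ then admits two mates: a horizontal left mate relative to $\colim_\Ieu \dashv \Delta$, which Example~\ref{ex:beck-chevalley for colimits} identifies as the pointwise-colimit comparison $\colim_\Ieu\ev_J \rar \ev_J\colim_\Ieu$ and declares invertible; and a vertical right mate relative to $\ev_J \dashv J_*$, which is precisely the mate $\Delta J_* \rar J_*\Delta$ whose invertibility we want. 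Beck-Chevalley interchange forces these two mates to be conjugate natural transformations, hence simultaneously invertible, yielding the dual Beck-Chevalley condition for the original square.

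The main obstacle is bookkeeping rather than calculation: the same square plays two roles at once, and one must carefully match which pair of adjoints (horizontal versus vertical) produces which mate of the shared identity $\tau$ before invoking the interchange theorem. Once the correct identifications are in place, both cases follow formally from the preceding material.
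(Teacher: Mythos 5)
Your proposal is correct and follows essentially the same two-step route as the paper: a direct componentwise verification that $\Delta\varepsilon = \varepsilon'_\Delta = \pr_{\id_J}$ when $J_*$ is pointwise (so that the identity's mate is again the identity), and Beck-Chevalley interchange combined with Example~\ref{ex:beck-chevalley for colimits} for the general case. You spell out the interchange step more explicitly than the paper's one-line citation, and your labelling of ``horizontal'' versus ``vertical'' mates implicitly rotates the drawn square so that $\colim_\Ieu \dashv \Delta$ plays the horizontal role required by the interchange theorem — that is the right identification and the argument goes through.
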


\begin{corollary}\label{cor:homotopical adjoints to evaluations}
  If~$\Ceu$ is a category with weak equivalences that has a terminal object, $\Jeu$ a preorder and~$J\in\Ob\Jeu$, then~$J_*\colon \Ceu \to \Ceu^\Jeu$ exists and is homotopical. For a general indexing category~$\Jeu$ (not necessarily a preorder), if~$\Ceu$ has all powers and weak equivalences are stable under them, then all functors $J_*\colon \Ceu \to \Ceu^\Jeu$ exist and are homotopical.
\end{corollary}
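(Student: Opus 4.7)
The plan is to read off both claims directly from the explicit formula for~$J_*$ given in Proposition~\ref{prop:adjoints of evaluation}, namely~$(J_*C)_{J'} = C^{\Jeu(J',J)}$, with~$J_*$ acting pointwise on morphisms. Thus existence of~$J_*$ reduces to existence of the indicated powers in~$\Ceu$, and homotopicality reduces to checking that each component $(J_*f)_{J'} = f^{\Jeu(J',J)}$ is a weak equivalence whenever~$f$ is.

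For the second (general) statement this is immediate: by hypothesis every set-indexed power exists in~$\Ceu$ and weak equivalences are stable under powers, so~$J_*$ is well-defined and sends weak equivalences pointwise to weak equivalences, hence is homotopical.

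For the first statement, I would exploit that in a preorder the hom-set~$\Jeu(J',J)$ has cardinality either~$0$ or~$1$. In the first case~$C^{\Jeu(J',J)}$ is an empty power, i.e.~a terminal object of~$\Ceu$, which exists by hypothesis; in the second case it is just~$C$. So the pointwise formula makes sense without any further assumptions on~$\Ceu$, showing that~$J_*$ exists. For homotopicality, given a weak equivalence~$f\colon C \to C'$, each component~$(J_*f)_{J'}$ is either the unique morphism between terminal objects, which is an isomorphism and hence a weak equivalence by the running convention that isomorphisms belong to~$\Wcal$, or else~$f$ itself. Either way~$(J_*f)_{J'}$ is a weak equivalence, and~$J_*f$ is a pointwise weak equivalence.

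There is no real obstacle here; the argument is essentially a case distinction on~$|\Jeu(J',J)|$ for the preorder half, and a direct invocation of the two hypotheses (existence of powers, stability of weak equivalences) for the general half. The only point worth flagging is the observation that an empty power is a terminal object, which is what lets the preorder case get away with merely assuming a terminal object rather than all powers.
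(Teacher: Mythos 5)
Your proof is correct and takes essentially the same approach as the paper: read off the pointwise formula $(J_*C)_{J'} = C^{\Jeu(J',J)}$ from Proposition~\ref{prop:adjoints of evaluation}, handle the general case by the two hypotheses directly, and for preorders observe that each component of $J_*w$ is either $w$ itself or the identity on a terminal object, using the convention that isomorphisms are weak equivalences.
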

\begin{proof}
  The second part follows directly from the formula for~$J_*$ given in the above proposition. For the first part, note that in a preorder, every~$\Jeu(J,J')$ is either empty or a singleton, so that all components of a $J_*w\colon J_*C \to J_*D$ induced by a weak equivalence~$w\colon C \to D$ in~$\Ceu$ are either~$w$ itself or the identity for the terminal object of~$\Ceu$.
\end{proof}

  In case~$\Ceu$ is a model category (in particular bicomplete) and~$\Ceu^\Jeu$ carries the projective model structure, we even obtain two Quillen adjunctions. The less trivial part of this result can also be found e.g.~in \cite[Lemme 3.1.12]{Cisinski2006}.
\begin{corollary}
  If~$\Ceu$ is a model category and~$\Jeu$ an index category such that the projective model structure on~$\Ceu^\Jeu$ exists, then~$J_!\dashv \ev_J\dashv J_*$ are both Quillen adjunctions for all~$J \in \Ob\Jeu$.
\end{corollary}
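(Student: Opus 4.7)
The plan is to verify directly that $\ev_J$ is both a right and a left Quillen functor, which immediately gives both Quillen adjunctions. Since the projective model structure on $\Ceu^\Jeu$ has pointwise weak equivalences and pointwise fibrations by definition, the right Quillen property of $\ev_J$ (and hence that $J_! \dashv \ev_J$ is a Quillen adjunction) is essentially tautological: $\ev_J$ trivially sends pointwise (acyclic) fibrations to (acyclic) fibrations in $\Ceu$.

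The content of the corollary is therefore the assertion that $\ev_J$ is also left Quillen, equivalently that $J_*$ is right Quillen. For this I would use the explicit pointwise formula $(J_*C)_{J'} = C^{\Jeu(J',J)}$ from Proposition~\ref{prop:adjoints of evaluation}. Given a morphism $f\colon C \to D$ in~$\Ceu$, the value of $J_*f$ at~$J'$ is the morphism $f^{\Jeu(J',J)}\colon C^{\Jeu(J',J)} \to D^{\Jeu(J',J)}$ of power objects, that is, a product of copies of~$f$ indexed by the set $\Jeu(J',J)$. So the whole matter reduces to the fact that in any model category, arbitrary products of fibrations (respectively of acyclic fibrations) are again fibrations (respectively acyclic fibrations).

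This stability under products is a standard consequence of the right lifting property: a product $\prod_i f_i$ of morphisms has the RLP against a given (acyclic) cofibration $A \hookrightarrow B$ whenever each~$f_i$ does, because a lifting problem against $\prod_i f_i$ decomposes factor by factor into lifting problems against the individual $f_i$, and any family of lifts assembles into a lift into the product. Applying this with the indexing set $\Jeu(J',J)$ and $f_i = f$ shows that $J_*$ sends (acyclic) fibrations in~$\Ceu$ to pointwise (acyclic) fibrations in~$\Ceu^\Jeu$, which are precisely the (acyclic) fibrations of the projective model structure. Hence $J_*$ is right Quillen and $\ev_J \dashv J_*$ is a Quillen adjunction.

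I do not anticipate a real obstacle here; the only thing to be slightly careful about is to invoke the pointwise formula for~$J_*$, which relies on the existence of the required powers in~$\Ceu$ but is automatic since~$\Ceu$, being a model category, is bicomplete.
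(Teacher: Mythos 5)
Your proof is correct and takes essentially the same route as the paper: $\ev_J$ trivially preserves projective (i.e.\ pointwise) fibrations and acyclic fibrations, giving $J_!\dashv\ev_J$, and the pointwise power formula for $J_*$ reduces $\ev_J\dashv J_*$ to the stability of (acyclic) fibrations under products. Your extra explanation of that stability via the right lifting property is just a spelled-out version of what the paper leaves implicit.
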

\begin{proof}
  The adjunction~$J_!\dashv\ev_J$ is easy because~$\ev_J$ preserves fibrations and acyclic fibrations. For the other adjunction, let~$f\colon C \to D$ be an (acyclic) fibration in~$\Ceu$ and~$J' \in \Ob\Jeu$. Then
  \[ J_*(C \xrightarrow{f} D)_{J'} = C^{\Jeu(J',J)} \xrightarrow{f^{\Jeu(J',J)}} D^{\Jeu(J',J)} \]
  and a product of (acyclic) fibrations is again an (acyclic) fibration.
\end{proof}

\begin{example}\label{ex:existence of adjoints to ev for preorders}
  Let~$\Ceu$ be a category with weak equivalences that has a terminal object. If~$\Jeu$ is a small preorder, then~$\ev_J$ has a fully faithful right adjoint~$J_*$ and the adjunction
  \[ \adj[\ev_J][J_*]{\Ceu^\Jeu}{\Ceu} \qquad\text{lifts to}\qquad \adj[\Ho\ev_J][\Ho J_*][2.8em]{\Ho(\Ceu^\Jeu)}{\Ho\Ceu} \]
  for all~$J \in \Ob\Jeu$. Dually, if~$\Ceu$ has an initial object and~$\Jeu$ is a preorder, then~$\ev_J$ has a fully faithful left adjoint~$J_!$ and the adjunction
  \[ \radj[J_!][\ev_J]{\Ceu^\Jeu}{\Ceu} \qquad\text{lifts to}\qquad \radj[\Ho J_!][\Ho\ev_J][2.8em]{\Ho(\Ceu^\Jeu)}{\Ho\Ceu}. \]
\end{example}

\section{Derived Functors on Diagram Categories}
A common trick in diagrammatic homotopy theory is to first draw a $3\times 3$\hyph diagram of the form
\[ \xymatrix@R=1.4em@C=1.4em{ \bullet & \bullet \ar[l] \ar[r] & \bullet \\ \bullet \ar[u] \ar[d] & \bullet \ar[l]\ar[r]\ar[u]\ar[d] & \bullet \ar[u]\ar[d] \\ \bullet & \bullet \ar[l]\ar[r] & \bullet } \]
and then use that the homotopy colimit of this diagram can be calculated as a double homotopy pushout in two ways: rows first or columns first. This is usually taken for granted and under suitably nice hypotheses (e.g.~working in a cofibrantly generated model category) very easy to derive from the corresponding result for strict colimits.

When analysing this technique in order to weaken the hypotheses as far as possible, one sees that the whole trick is a combination of two results. First one uses a Fubini type result, which says that a homotopy colimit indexed by~$\Ieu \times \Jeu$ can be calculated as a double homotopy colimit and second, one uses that homotopy colimits in a diagram category~$\Ceu^\Jeu$ can be calculated pointwise (cf.~\ref{ex:beck-chevalley for colimits}). More specifically, the {\it Fubini theorem\/}\index{Fubini!theorem} is the following.
\begin{theorem}
  Let~$\Ceu$ be a category with weak equivalences and $\Ieu$,~$\Jeu$ index categories such that~$\Ceu^\Ieu$ has $\Jeu$\hyph homotopy colimits while~$\Ceu$ has $\Ieu$\hyph homotopy colimits. Then~$\Ceu$ has $\Ieu\times\Jeu$\hyph homotopy colimits and they are given by
  \[ \Ho(\Ceu^{\Ieu\times\Jeu}) \cong \Ho\bigl((\Ceu^\Ieu)^\Jeu\bigr) \xrightarrow{\hocolim_\Jeu} \Ho(\Ceu^\Ieu) \xrightarrow{\hocolim_\Ieu} \Ho\Ceu. \]
\end{theorem}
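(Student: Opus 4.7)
The plan is to reduce this to an application of Corollary~\ref{cor:composition of derived adjunctions formal}, which says that for two derivable adjunctions, the left adjoints compose iff the right adjoints do.

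First I would unfold the standard (strict) Fubini identification. Under the canonical isomorphism $\Ceu^{\Ieu\times\Jeu} \cong (\Ceu^\Ieu)^\Jeu$, the colimit functor $\colim_{\Ieu\times\Jeu}$ corresponds to the composite $\colim_\Ieu\circ\colim_\Jeu\colon (\Ceu^\Ieu)^\Jeu \to \Ceu^\Ieu \to \Ceu$, and the constant-diagram functor $\Delta_{\Ieu\times\Jeu}\colon \Ceu \to \Ceu^{\Ieu\times\Jeu}$ corresponds to $\Delta_\Jeu\circ\Delta_\Ieu\colon \Ceu \to \Ceu^\Ieu \to (\Ceu^\Ieu)^\Jeu$. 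The adjunctions likewise decompose as composites. So all that needs proving is that the composite of the two derivable adjunctions on the right is again derivable, with the derived left adjoint given by $\hocolim_\Ieu\circ\hocolim_\Jeu$ (with its canonical counit).

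Next I would handle the right adjoints, which is the easy half. Both constant-diagram functors $\Delta_\Ieu$ and $\Delta_\Jeu$ are homotopical, so their absolute total right derived functors are just $\Ho\Delta_\Ieu$ and $\Ho\Delta_\Jeu$ with identity units. Their composite $\Ho\Delta_\Jeu\circ\Ho\Delta_\Ieu$ equals $\Ho(\Delta_\Jeu\circ\Delta_\Ieu)$, so it is itself an absolute total right derived functor of the composite $\Delta_\Jeu\circ\Delta_\Ieu$, with the canonical unit (here the identity). In the language of the paper, $\Rbb\Delta_\Ieu$ composes with $\Rbb\Delta_\Jeu$.

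Now I invoke Corollary~\ref{cor:composition of derived adjunctions formal} (or rather the symmetric formulation recorded in Remark~\ref{rem:composition of derived adjunctions}): since the right adjoints compose, so do the left adjoints. That is, $(\hocolim_\Ieu\circ\hocolim_\Jeu,\,\lambda_{\colim_\Jeu}\circ\hocolim_\Ieu\lambda')$ is an absolute total left derived functor of $\colim_\Ieu\circ\colim_\Jeu$. Translated back through the Fubini identification, this says that $\hocolim_{\Ieu\times\Jeu}$ exists and agrees with the iterated homotopy colimit. Derivability of the adjunction $\colim_{\Ieu\times\Jeu}\dashv\Delta$ is then automatic: the derived adjunction is the composite of the two given derived adjunctions, whose unit and counit are assembled from those of the factors in the standard way.

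The only real obstacle here is bookkeeping: one must verify that the canonical Fubini isomorphism $\Ceu^{\Ieu\times\Jeu}\cong(\Ceu^\Ieu)^\Jeu$ intertwines the two adjunctions $\colim_{\Ieu\times\Jeu}\dashv\Delta_{\Ieu\times\Jeu}$ and $(\colim_\Ieu\circ\colim_\Jeu)\dashv(\Delta_\Jeu\circ\Delta_\Ieu)$ including their units and counits, so that the resulting derivability statement transports correctly. This is a purely 1-categorical check, and no homotopical content enters beyond the appeal to Corollary~\ref{cor:composition of derived adjunctions formal}.
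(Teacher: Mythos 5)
Your proposal is correct and is essentially the paper's own argument, unpacked: the paper's entire proof is the one-liner ``Look at the right adjoints,'' which is precisely your observation that the constant-diagram functors are homotopical, hence their total right derived functors trivially compose, so by Corollary~\ref{cor:composition of derived adjunctions formal} (equivalently Remark~\ref{rem:composition of derived adjunctions}) the derived left adjoints compose as well.
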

\begin{proof}
  Look at the right adjoints.
\end{proof}

For the second result, recall that if~$\Ceu$ is a cocomplete category, then every diagram category~$\Ceu^\Jeu$ is again cocomplete and the colimits are calculated pointwise. Unfortunately, for homotopy colimits, the matter is not that simple. For instance, there is no reason that the existence of homotopy colimits in a category~$\Ceu$ (equipped with a class of weak equivalences) should imply the existence of homotopy colimits in a diagram category~$\Ceu^\Jeu$ (with pointwise weak equivalences). However if we assume their existence, we can study interactions between homotopy colimits in different categories.

\begin{proposition}\label{prop:transfer of hocolims}
  Let~$\Ceu$ be a category with weak equivalences, $\Ieu$,~$\Jeu$ two index categories and~$J \in \Ob\Jeu$ such that~$J_! \dashv \ev_J \dashv J_*$ exist and are both derivable while $\End_\Jeu(J) = \{\id_J\}$. If~$\Ceu^\Jeu$ has $\Ieu$\hyph homotopy colimits, then~$\Ceu$ has $\Ieu$\hyph homotopy colimits, too.
\end{proposition}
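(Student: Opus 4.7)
The plan is to reduce the problem, via the corollary closing Section~\ref{sec:derived adjunctions}, to producing a left adjoint of $\Ho\Delta_\Ceu\colon \Ho\Ceu\to\Ho(\Ceu^\Ieu)$. Indeed $\Delta_\Ceu$ is homotopical, so $\Rbb\Delta_\Ceu = \Ho\Delta_\Ceu$ is an absolute total right derived functor, and that corollary turns the question of derivability into one of existence of a left adjoint. Note that strict $\Ieu$-colimits in $\Ceu$ do exist: given $D\colon \Ieu\to\Ceu$, form $\colim_\Ieu(J_*\circ D)$ in $\Ceu^\Jeu$ (available because $\Ceu^\Jeu$ has $\Ieu$-homotopy colimits, hence in particular a strict left adjoint to $\Delta_{\Ceu^\Jeu}$) and apply $\ev_J$, using $\ev_J\circ J_*\cong\id_\Ceu$; so only derivability is at issue.

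The first step is to observe that $\Rbb J_*$ (and dually $\Lbb J_!$) is fully faithful. Because $\End_\Jeu(J) = \{\id_J\}$, Proposition~\ref{prop:adjoints of evaluation} gives $\varepsilon\colon \ev_J\circ J_* \xrightarrow{\cong} \id_\Ceu$. Since $\ev_J$ is homotopical, outer-homotopical composition tells us that $\Ho\ev_J\circ\Rbb J_*$ with unit $\Ho\ev_J\rho$ is an absolute total right derived functor of $\ev_J\circ J_*$; uniqueness identifies it with $(\id_{\Ho\Ceu}, H_\Ceu\varepsilon)$, so $\Ho\ev_J\rho$ is invertible. The compatibility square~\eqref{eqn:compatibility of units and counits} applied to $\ev_J\dashv J_*$ (with trivial $\lambda$, as $\ev_J$ is homotopical) reads $\dot\varepsilon_{H_\Ceu}\circ \Ho\ev_J\rho = H_\Ceu\varepsilon$, and both sides of the equation make $\dot\varepsilon_{H_\Ceu}$ invertible; Proposition~\ref{prop:strict localisation is weak} then lifts this to $\dot\varepsilon$ being invertible.

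Next, I would rewrite $\Ho\Delta_\Ceu$ using the strict identity $\Delta_{\Ceu^\Jeu}\circ J_* = J_*^\Ieu\circ \Delta_\Ceu$ (equivalently $\ev_J^\Ieu\circ\Delta_{\Ceu^\Jeu} = \Delta_\Ceu\circ\ev_J$) together with $\Ho\ev_J\circ\Rbb J_*\cong\id_{\Ho\Ceu}$ and the homotopicality of $\Delta_{\Ceu^\Jeu}$ and $\ev_J^\Ieu$:
\[
\Ho\Delta_\Ceu \;\cong\; \Ho\Delta_\Ceu\circ\Ho\ev_J\circ\Rbb J_* \;=\; \Ho\ev_J^\Ieu\circ\Ho\Delta_{\Ceu^\Jeu}\circ\Rbb J_*.
\]
Two of the three factors on the right already have known left adjoints: $\Rbb J_*$ has left adjoint $\Ho\ev_J$, and $\Ho\Delta_{\Ceu^\Jeu}$ has left adjoint $\hocolim_\Ieu^{\Ceu^\Jeu}$ by the standing hypothesis on $\Ceu^\Jeu$. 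So, granted a left adjoint $\Lbb J_!^\Ieu$ of $\Ho\ev_J^\Ieu$, the composite $L \defas \Ho\ev_J\circ\hocolim_\Ieu^{\Ceu^\Jeu}\circ\Lbb J_!^\Ieu$ is left adjoint to $\Ho\Delta_\Ceu$, and Theorem~\ref{thm:adjoint gives derived functor} will automatically upgrade $L$ to an absolute total left derived functor of $\colim_\Ieu^\Ceu$, yielding the desired derivable adjunction.

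The hard part is therefore deriving the strict adjunction $J_!^\Ieu\dashv\ev_J^\Ieu$ between $\Ieu$-diagram categories. Since $\ev_J^\Ieu$ is homotopical (weak equivalences in $(\Ceu^\Jeu)^\Ieu$ being pointwise), $\Lbb\ev_J^\Ieu = \Ho\ev_J^\Ieu$ is automatic, and by Theorem~\ref{thm:derived functors are adjoint} it suffices to exhibit an absolute total left derived functor $\Lbb J_!^\Ieu$ of $J_!^\Ieu$ together with the adjunction. This is the step where the derivability of $J_!\dashv\ev_J$ at the base level must be used nontrivially: the structural data underlying $\Lbb J_!$ (for instance a left $J_!$-deformation retract with a functorial replacement, as in Remark~\ref{rem:functorial F-deformations}) lifts pointwise from $\Ceu$ to $\Ceu^\Ieu$, producing $\Lbb J_!^\Ieu$ and the derived adjunction $\Lbb J_!^\Ieu\dashv\Ho\ev_J^\Ieu$. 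Once this step is carried out, the composite $L$ above supplies the left adjoint of $\Ho\Delta_\Ceu$ and the corollary invoked at the outset finishes the proof.
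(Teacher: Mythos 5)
Your overall strategy --- factor the right adjoint $\Ho\Delta\colon\Ho\Ceu\to\Ho(\Ceu^\Ieu)$ through $\Ho(\Ceu^\Jeu)$ and $\Ho\bigl((\Ceu^\Jeu)^\Ieu\bigr)$ as $\Ho\ev_J\circ\Ho\Delta\circ\Rbb J_*$, read off its left adjoint from the chain of three adjunctions, and finish with the corollary closing Section~\ref{sec:derived adjunctions} --- is exactly the paper's. Your observation that strict $\Ieu$-colimits in $\Ceu$ are available is a welcome addition the paper leaves tacit.

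However, the step you correctly flag as the crux --- producing the derived adjunction $\Lbb J_!\dashv\Ho\ev_J$ between $\Ho(\Ceu^\Ieu)$ and $\Ho\bigl((\Ceu^\Ieu)^\Jeu\bigr)$ --- is not obtained correctly. You propose to transport a left $J_!$-deformation retraction from $\Ceu$ to $\Ceu^\Ieu$ by pointwise application, citing Remark~\ref{rem:functorial F-deformations}. That remark requires the replacement $Q$ to be an actual functor $\Ceu\to\Ceu_0$, whereas the proposition's hypothesis only says that $J_!\dashv\ev_J$ is derivable. Derivability hands you an absolute total left derived functor and its adjunction with $\Rbb\ev_J$, but no deformation retraction and certainly no functorial one; dropping exactly that functoriality requirement is a stated goal of Section~\ref{sec:construction}, and Remark~\ref{rem:functorial deformations} points out that the pointwise lift breaks without it, since $\Ho(\Ceu^\Ieu)\not\cong(\Ho\Ceu)^\Ieu$ in general. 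The paper does not attempt such a transfer: the derivability of $J_!\dashv\ev_J$ is taken as a standing hypothesis at the level of $\Ceu^\Ieu$ as well (read in light of the example that follows, where all the relevant adjoints are homotopical at every level via~\ref{cor:homotopical adjoints to evaluations} and its dual), so the reduction from $\Ceu$ to $\Ceu^\Ieu$ that you try to carry out is simply not performed. To repair your argument, either assume derivability of $J_!\dashv\ev_J$ for $\Ceu^\Ieu$ directly, or strengthen the hypothesis to require a functorial deformation retraction.
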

\begin{proof}
  We obtain a left adjoint to~$\Ho\Delta\colon \Ho\Ceu \to \Ho(\Ceu^\Ieu)$ by
  \[ \xymatrix@1@C=3.5em{ *++{\Ho(\Ceu^\Ieu)} \ar@<1ex>[r]^-{\Lbb J_!}^-{}="1" & *++{\Ho\bigl((\Ceu^\Ieu)^\Jeu\bigr) \cong \Ho(\Ceu^{\Ieu\times\Jeu}) \cong \Ho\bigl((\Ceu^\Jeu)^\Ieu\bigr)} \ar@<1ex>[l]^-{\Ho\ev_J}^-{}="2" \ar@<1ex>[r]^-{\hocolim_\Ieu}^-{}="11" & *++{\Ho(\Ceu^\Jeu)} \ar@<1ex>[l]^-{\Ho\Delta}^-{}="12" \ar@<1ex>[r]^-{\Ho\ev_J}^-{}="21" & *++{\Ho\Ceu} \ar@<1ex>[l]^-{\Rbb J_*}^-{}="22"
  \ar@{}"1";"2"|(.3){\hbox{}}="3" \ar@{}"1";"2"|(.7){\hbox{}}="4" \ar@{|-}"4";"3"
  \ar@{}"11";"12"|(.3){\hbox{}}="13" \ar@{}"11";"12"|(.7){\hbox{}}="14" \ar@{|-}"14";"13"
  \ar@{}"21";"22"|(.3){\hbox{}}="23" \ar@{}"21";"22"|(.7){\hbox{}}="24" \ar@{|-}"24";"23" } \]
  because~$\Rbb J_*$ and~$\Ho(\ev_J\circ\Delta)$ compose (cf.\ \ref{ex:quillen functors compose}) and~$\ev_J\circ\Delta\circ J_* \cong \Delta$, where we need that $\End_\Jeu(J) = \{\id_J\}$ for the last isomorphism.
\end{proof}

\begin{example}
  The conditions of this proposition are satisfied if~$\Ceu$ has an initial and a terminal object while~$J$ is such that~$\Jeu(J',J)$ and $\Jeu(J,J')$ contain at most one element for all~$J' \in \Ob\Jeu$ (e.g.~$\Jeu$ a preorder).
\end{example}

\begin{remark}\label{rem:hocolims in diagram cats}
  If~$\Meu$ is a model category and~$\Ieu$ an arbitrary indexing category, then~$\Meu^\Ieu$ need not be a model category. However, it has a left model approximation in the sense of~\cite{HToD} and we can therefore still construct homotopy colimits in~$\Meu^\Ieu$.
\end{remark}

  As this proof illustrates, a basic tactic for establishing results about~$\hocolim_\Ieu$ is to switch to adjoints and work with~$\Ho\Delta$ instead. With this in mind, a direct attack on the ``pointwiseness'' of homotopy colimits in a diagram category might go as follows: If~$\Ho\Delta$ composes with~$\Rbb J_*$, then two composite right adjoints
  \[ \Rbb J_* \circ \Ho\Delta = \Rbb(J_*\circ\Delta) = \Rbb(\Delta\circ J_*) = \Ho\Delta\circ\Rbb J_* \]
  are the same and so its conjugate is an isomorphism between composite left adjoints
  \[ \Ho\ev_J\circ{\hocolim_\Ieu} \cong {\hocolim_\Ieu}\circ\Ho\ev_J. \]

  Although already a first step, this line of thought only tells us that we can calculate homotopy colimits pointwise on the object level and we cannot conclude anything on the level of arrows. More specifically, if~$j\colon J \to J'$ is an arrow, we obtain~$\ev_j\colon \Ceu^\Jeu \to \Ceu^{[1]}$, which induces~$\Ho\ev_j\colon \Ho(\Ceu^\Jeu) \to \Ho(\Ceu^{[1]})$, so that for~$X \in \Ob{\Ho(\Ceu^{\Ieu\times\Jeu})}$, 
  \[ (\Ho\ev_j)\hocolim_\Ieu X \in \Ob{\Ho(\Ceu^{[1]})}. \]
  On the other hand, if we view~$X(-,j)\colon X(-,J) \to X(-,J')$ as a morphism in~$\Ho(\Ceu^\Ieu)$ (i.e.~an object in~$\Ho(\Ceu^\Ieu)^{[1]}$), then applying~$\hocolim_\Ieu$ gives a morphism in~$\Ho\Ceu$ (i.e.~an object in~$(\Ho\Ceu)^{[1]}$) and we need to show that this is the same as
  \[ \ol{(H_\Ceu)_*}(\Ho\ev_j)\hocolim_\Ieu X, \quad\text{where }\ol{(H_\Ceu)_*}\text{ is defined by }\quad \vcenter{\xymatrix{ \Ceu^{[1]} \ar[r]^-{H_{\Ceu^{[1]}}} \ar[rd]_-{(H_\Ceu)_*} & \Ho(\Ceu^{[1]}) \ar[d]^{\ol{(H_\Ceu)_*}} \\ & (\Ho\Ceu)^{[1]} }} \]
  being commutative. In fact, it is not even clear why~$\hocolim_\Ieu X(-,j)$ should lie in the image of~$H_\Ceu$ (up to conjugation with a natural isomorphism). A more informal way to summarise this is that we would like to have an isomorphism ${\hocolim_\Ieu}\circ\Ho\ev_J \cong \Ho\ev_J\circ{\hocolim_\Ieu}$ that is natural in~$J \in \Ob\Jeu$; i.e.~such that for all~$j\colon J \to J'$ in~$\Jeu$ the following square in~$\Ho\Ceu$ is commutative:
  \[ \xymatrix{ \hocolim_\Ieu X(-,J) \ar[d]_{\hocolim_\Ieu X(-,j)} \ar@{}[r]|-*{\cong} & (\hocolim_\Ieu X)_J \ar[d]^{(\hocolim_\Ieu X)_j} \\ \hocolim_\Ieu X(-,J') \ar@{}[r]|-{\cong} & (\hocolim_\Ieu X)_{J'} & *!<1em,1ex>{.} } \]
  Again inspired by the corresponding strict result in the form of \ref{ex:beck-chevalley for colimits}, the (horizontal) Beck-Chevalley condition comes to our rescue.
\begin{theorem}\label{prop:hocolim and ev J}
  Let~$\Ceu$ be a category with weak equivalences, $\Ieu$,~$\Jeu$ two index categories and~$J \in \Ob\Jeu$ such that all derived adjunctions in the left-hand square
  \[ \vcenter{\xymatrix@C=3em{ \Ho\bigl((\Ceu^\Ieu)^\Jeu\bigr) \cong \Ho\bigl((\Ceu^\Jeu)^\Ieu\bigr) \ar@<-2.93em>[d]_{\Ho\ev_J}_{}="21" \ar@<1ex>[r]^-{\hocolim_\Ieu}^-{}="1" & \Ho(\Ceu^\Jeu) \ar@<-1ex>[d]_{\Ho\ev_J}_{}="31" \ar@<1ex>[l]^-{\Ho\Delta}^-{}="2" \\ 
*+!<2.5em,0mm>{\Ho(\Ceu^\Ieu)} \ar@<2.07em>[u]_{\Rbb J_*}^{}="22" \ar@<1ex>[r]^-{\hocolim_\Ieu}^-{}="11" & \Ho\Ceu \ar@<1ex>[l]^-{\Ho\Delta}^-{}="12" \ar@<-1ex>[u]_{\Rbb J_*}^{}="32"
  \ar@{}"1";"2"|(.3){\hbox{}}="3" \ar@{}"1";"2"|(.7){\hbox{}}="4" \ar@{|-} "4";"3"
  \ar@{}"11";"12"|(.3){\hbox{}}="13" \ar@{}"11";"12"|(.7){\hbox{}}="14" \ar@{|-} "14";"13"
  \ar@{}"21";"22"|(.3){\hbox{}}="23" \ar@{}"21";"22"|(.7){\hbox{}}="24" \ar@{|-} "24";"23"
  \ar@{}"31";"32"|(.3){\hbox{}}="33" \ar@{}"31";"32"|(.7){\hbox{}}="34" \ar@{|-} "34";"33"
  }} \ \vcenter{\xymatrix{ \Ho\bigl((\Ceu^\Ieu)^\Jeu\bigr) \cong \Ho\bigl((\Ceu^\Jeu)^\Ieu\bigr) \ar@<-2.93em>[d]^{\Ho\ev_J} & \Ho(\Ceu^\Jeu) \ar[d]_{\Ho\ev_J}\ar[l]_-{\Ho\Delta} \\ *+!<2.5em,0mm>{\Ho(\Ceu^\Ieu)} & \Ho\Ceu \ar[l]^-{\Ho\Delta}}} \]
  exist and\/~$\Ho\Delta$ composes with\/~$\Rbb J_*$. Then the right-hand square (filled with the identity) satisfies the (horizontal) Beck-Chevalley condition. In particular, there is an isomorphism
  \[ \sigma\colon {\hocolim_\Ieu}\circ\Ho\ev_J \cong \Ho\ev_J\circ{\hocolim_\Ieu}, \qquad\text{natural in }J \in \Ob\Jeu. \]
\end{theorem}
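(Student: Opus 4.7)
The plan is to transfer the strict Beck-Chevalley condition from Example~\ref{ex:beck-chevalley for colimits} to the derived setting by a single invocation of Theorem~\ref{thm:derived mates}. At the strict level, Example~\ref{ex:beck-chevalley for colimits} tells us that the square with $\Delta$ horizontally and $\ev_J$ vertically satisfies the Beck-Chevalley condition with respect to the horizontal adjunctions $\colim_\Ieu\dashv\Delta$: both the filling identity 2-cell $\id\colon\Delta\circ\ev_J = \ev_J\circ\Delta$ and its mate $\id\colon\colim_\Ieu\circ\ev_J = \ev_J\circ\colim_\Ieu$ are identities and in particular invertible.

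I would then apply Theorem~\ref{thm:derived mates} to this mate pair with $F = F' = \colim_\Ieu$, $G = G' = \Delta$ and $X = Y = \ev_J$, the latter being homotopical since weak equivalences in diagram categories are defined pointwise. The theorem requires two composability hypotheses. That $\Ho\ev_J$ composes with $\Rbb\Delta = \Ho\Delta$ is immediate from Example~\ref{ex:quillen functors compose}, as $\Delta$ is homotopical. That $\Ho\ev_J$ composes with $\hocolim_\Ieu$ is precisely the left-adjoint dual of the hypothesis that $\Ho\Delta$ composes with $\Rbb J_*$: by Remark~\ref{rem:composition of derived adjunctions}, applied to the two derived adjunctions $\Ho\ev_J\dashv\Rbb J_*$ and $\hocolim_\Ieu\dashv\Ho\Delta$, the left adjoints compose if and only if the right adjoints do. Theorem~\ref{thm:derived mates} then yields that $\Lbb(\id)\colon\hocolim_\Ieu\circ\Ho\ev_J\to\Ho\ev_J\circ\hocolim_\Ieu$ and $\Rbb(\id)\colon\Ho\ev_J\circ\Ho\Delta\to\Ho\Delta\circ\Ho\ev_J$ are mates with respect to the derived horizontal adjunctions. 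Since $\Lbb$ preserves identities and vertical compositions strictly and $\id$ is invertible, $\Lbb(\id)$ is invertible, which is the horizontal Beck-Chevalley condition for the right-hand derived square.

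The main obstacle I anticipate is the naturality of the isomorphism $\sigma\colon\hocolim_\Ieu\circ\Ho\ev_J\cong\Ho\ev_J\circ\hocolim_\Ieu$ in $J\in\Ob\Jeu$. For a morphism $j\colon J\to J'$ in $\Jeu$ one has the natural transformation $\ev_j\colon\ev_J\Rightarrow\ev_{J'}$, and commutativity of the naturality square
\[ \xymatrix@C=4em{\hocolim_\Ieu\circ\Ho\ev_J \ar[r]^{\sigma_J} \ar[d]_{\hocolim_\Ieu\,\Ho\ev_j} & \Ho\ev_J\circ\hocolim_\Ieu \ar[d]^{\Ho\ev_j\,\hocolim_\Ieu} \\ \hocolim_\Ieu\circ\Ho\ev_{J'} \ar[r]_{\sigma_{J'}} & \Ho\ev_{J'}\circ\hocolim_\Ieu} \]
must be established. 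My plan is to first observe that at the strict level the analogous square commutes trivially, both horizontal identifications being identities and the commutativity reducing to the tautology $\colim_\Ieu\,\ev_j = \ev_j\,\colim_\Ieu$ (which holds because colimits in~$\Ceu^\Jeu$ are computed pointwise). Transferring this commutativity up to the derived level via the naturality of the mating bijection in its vertical variables (Remark~\ref{rem:naturality of the mating bijection}, applied to the vertical pasting of the naturality-of-$\ev_j$ square with the derived Beck-Chevalley square) then yields the required naturality of $\sigma_J$ in $J$.
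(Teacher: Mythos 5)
Your proposal is correct and establishes the result by a genuinely different decomposition than the paper's. The paper first invokes the Beck-Chevalley interchange theorem (Theorem~5.6) to reduce to the \emph{vertical dual} Beck-Chevalley condition, which it then deduces from the strict vertical dual BC in Observation~\ref{obs:beck-chevalley and evaluation} (the square with $\ev_J\dashv J_*$ and $\Delta$) via Theorem~\ref{thm:derived mates}. You instead attack the \emph{horizontal} Beck-Chevalley condition directly: you take the strict horizontal BC from Example~\ref{ex:beck-chevalley for colimits} (both $\sigma$ and $\tau$ are identities) and push it through Theorem~\ref{thm:derived mates} with $F=F'=\colim_\Ieu$, $G=G'=\Delta$, $X=Y=\ev_J$. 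This bypasses Beck-Chevalley interchange and Observation~\ref{obs:beck-chevalley and evaluation} entirely and is arguably more direct; both routes require the same translation of the compositional hypothesis via Remark~\ref{rem:composition of derived adjunctions}. One small gap in your write-up: the justification that $\Lbb(\id)$ is invertible needs a touch more care than ``$\Lbb$ preserves identities'', since the domain $\Lbb F'\circ\Ho X$ and codomain $\Ho Y\circ\Lbb F$ are distinct choices of total left derived functor of the same composite, so $\Lbb(\id)$ is not literally an identity. The clean argument is either that $\Lbb$ strictly preserves vertical composition, so $\Lbb(\id^{-1})\circ\Lbb(\id)=\Lbb(\id)=\id$ and vice versa, or directly that $\Lbb(\id)$ is by its universal property the unique comparison isomorphism between two representatives of $\Lbb(F'X)=\Lbb(YF)$. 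Your naturality paragraph also phrases things as ``transferring commutativity from strict to derived via the mating bijection'', which is a bit off: Remark~\ref{rem:naturality of the mating bijection} is applied entirely at the derived level, where the $\Ho\Delta$-side square commutes trivially because $\Ho\ev_j$ whiskered on either side of $\Ho\Delta$ gives the same thing (inherited from the strict equality $\Delta\ev_j=\ev_j\Delta$); the remark then transfers that commutativity across to the $\hocolim_\Ieu$ side. The substance is right, only the narration of the step is muddled.
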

\begin{proof}
  By Beck-Chevalley interchange, it suffices to check that the right-hand square satisfies the vertical dual Beck-Chevalley condition. For this, we observe that~$\Ho\ev_J$ composes with~$\hocolim_\Ieu$ since~$\Ho\Delta$ composes with~$\Rbb J_*$ (cf.\ \ref{rem:composition of derived adjunctions}) and that on the strict level, the right-hand square from the proposition satisfies the Beck-Chevalley condition (cf.\ \ref{obs:beck-chevalley and evaluation}). Now use \ref{thm:derived mates}. Finally, the ``in particular''-part is just an instance of \ref{rem:naturality of the mating bijection}. To wit, for $j\colon J \to J'$ in~$\Jeu$, the left-hand square below commutes iff the right-hand square does, where the two horizontal pairs are mates:
  \[ \xymatrix@C=1.5em{ \hocolim_\Ieu\circ\Ho\ev_J \ar[r]^-{\sigma} \ar[d]_{{\hocolim_\Ieu}\Ho\ev_j} & \Ho\ev_J\circ\hocolim_\Ieu \ar[d]_{(\Ho\ev_j)_{\hocolim_\Ieu}} \\ \hocolim_\Ieu\circ\Ho\ev_{J'} \ar[r]_-{\sigma'} & \Ho\ev_{J'}\circ\hocolim_\Ieu }
  \xymatrix@C=1.5em{ \Ho\ev_J\circ\Ho\Delta \ar[r]^-{\id} \ar[d]_{\mathllap{(\Ho\ev_j)_{\Ho\Delta}}} & \Ho\Delta\circ\Ho\ev_j \ar[d]_{(\Ho\Delta)\Ho\ev_j} \\ \Ho\ev_{J'}\circ\Ho\Delta \ar[r]_-{\id} & \Ho\Delta\circ\Ho\ev_{J'} & *!<1.8em,.5ex>{.} } \]
\end{proof}

\begin{corollary}\label{cor:special commutation of hocolims}
  Let $\Ceu$,~$\Ieu$,~$\Jeu$ be as in the theorem and assume that~$\Ceu^\Jeu$ has homotopy $\Ieu$\hyph colimits. If~$\Ceu$ has a terminal object and~$\Jeu$ is a preorder, there are isomorphisms
  \[ \sigma\colon {\hocolim_\Ieu}\circ\Ho\ev_J \cong \Ho\ev_J\circ{\hocolim_\Ieu}, \qquad\text{natural in }J \in \Ob\Jeu. \]
  In particular, we always have such natural isomorphisms for double homotopy pushouts, coproducts and telescopes.
\end{corollary}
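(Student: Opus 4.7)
The plan is to apply Theorem~\ref{prop:hocolim and ev J}, so the task reduces to verifying its two hypotheses: (a) all four derived adjunctions in the left-hand square exist, and (b) $\Ho\Delta$ composes with~$\Rbb J_*$. The special assumptions that $\Ceu$ has a terminal object and $\Jeu$ is a preorder will be used precisely to force the vertical adjunctions $\ev_J\dashv J_*$ to be homotopically well-behaved, while the hypothesis that~$\Ceu^\Jeu$ has~$\Ieu$\hyph homotopy colimits supplies the top-horizontal adjunction and, via a short transfer argument, the bottom-horizontal one.

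First I would handle the two vertical adjunctions. Since $\Jeu$ is a preorder we have $\End_\Jeu(J')=\{\id_{J'}\}$ for every~$J' \in \Ob\Jeu$, so Corollary~\ref{cor:homotopical adjoints to evaluations} applied to~$\Ceu$ shows that $J_*\colon \Ceu\to\Ceu^\Jeu$ exists and is homotopical; the same corollary applied to~$\Ceu^\Ieu$ (which inherits a terminal object pointwise) yields the analogous conclusion for $J_*\colon \Ceu^\Ieu\to(\Ceu^\Ieu)^\Jeu$. Combined with the tautological homotopicity of evaluation functors, both $\ev_J\dashv J_*$ adjunctions are derivable, with $\Lbb\ev_J=\Ho\ev_J$ and $\Rbb J_*=\Ho J_*$. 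This simultaneously settles hypothesis~(b), since both $\Delta$ and~$J_*$ being homotopical makes the canonical composite $\Ho J_*\circ\Ho\Delta=\Ho(J_*\circ\Delta)$ automatically an absolute total right derived functor of~$J_*\circ\Delta$.

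The top-horizontal derived adjunction exists by hypothesis. For the bottom one, I would first establish the strict isomorphism $\colim_\Ieu^\Ceu\cong\ev_J\circ\colim_\Ieu^{\Ceu^\Jeu}\circ J_*^\Ieu$ by a short $\Hom$\hyph calculation that combines $\ev_J\dashv J_*$, $\colim_\Ieu\dashv\Delta_\Ieu$, the strict Beck-Chevalley identity $\Delta_\Ieu\circ J_*\cong J_*^\Ieu\circ\Delta_\Ieu$ from Observation~\ref{obs:beck-chevalley and evaluation}, and the fully faithfulness of~$J_*^\Ieu$; in particular this shows that~$\Ceu$ has strict $\Ieu$\hyph colimits. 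Since $\ev_J$ and~$J_*^\Ieu$ are both homotopical, Example~\ref{ex:quillen functors compose} tells us they compose with~$\Lbb\colim_\Ieu^{\Ceu^\Jeu}$, so $\Ho\ev_J\circ\Lbb\colim_\Ieu^{\Ceu^\Jeu}\circ\Ho J_*^\Ieu$ is an absolute total left derived functor of~$\colim_\Ieu^\Ceu$, and the same adjointness calculation shows that it is left adjoint to~$\Ho\Delta$.

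With all hypotheses of Theorem~\ref{prop:hocolim and ev J} verified, the main statement follows immediately. For the ``in particular'' clause, I would just observe that the indexing categories for homotopy pushouts ($\bullet\leftarrow\bullet\rightarrow\bullet$), coproducts (discrete categories) and telescopes ($\omega$) are all preorders, so the hypotheses apply to each of them in the role of~$\Jeu$. The main delicate point in the whole argument is the verification that the bottom-horizontal adjunction is \emph{derivable} in the sense of Section~7 (i.e.\ that it arises from an actual strict adjunction on~$\Ceu$, rather than merely that $\Ho\Delta_\Ieu^\Ceu$ acquires a left adjoint on the homotopy category); this is exactly what forces the strict identity $\colim_\Ieu^\Ceu\cong\ev_J\circ\colim_\Ieu^{\Ceu^\Jeu}\circ J_*^\Ieu$ to be part of the argument.
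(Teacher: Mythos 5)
Your overall strategy — reduce to verifying the hypotheses of Theorem~\ref{prop:hocolim and ev J} — matches the paper. The treatment of the vertical adjunctions and of the composition hypothesis (b) is the same as the paper's: $J_*$ is homotopical by Corollary~\ref{cor:homotopical adjoints to evaluations}, so $\Rbb J_* = \Ho J_*$ and it trivially composes with $\Ho\Delta$. Where you diverge is the bottom-horizontal adjunction. The paper simply invokes Proposition~\ref{prop:transfer of hocolims}: pick any $J$, note $\End_\Jeu(J) = \{\id_J\}$ (preorder), and transfer homotopy $\Ieu$-colimits from $\Ceu^\Jeu$ to $\Ceu$. You instead try to construct $\Lbb\colim_\Ieu^\Ceu$ directly from the strict identity $\colim_\Ieu^\Ceu\cong\ev_J\circ\colim_\Ieu^{\Ceu^\Jeu}\circ J_*^\Ieu$, which is an interesting variant because Proposition~\ref{prop:transfer of hocolims} as stated requires $J_!\dashv\ev_J$ to be derivable, and a terminal object alone does not supply $J_!$ — so your route in principle sidesteps an issue in the paper's citation.

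However, your derivation step has a genuine gap. You invoke Example~\ref{ex:quillen functors compose} to say that $J_*^\Ieu$ (homotopical) composes with $\Lbb\colim_\Ieu^{\Ceu^\Jeu}$, but that example (via the definition of ``composes'') only covers \emph{post}composition with a homotopical functor: for $\Ceu\xto{F}\Deu\xto{F'}\Eeu$ with $F'$ homotopical, one gets $\Lbb F'\circ\Lbb F = \Lbb(F'\circ F)$. Your composite has $J_*^\Ieu$ as the \emph{inner} functor, so you need $\Lbb\colim_\Ieu^{\Ceu^\Jeu}\circ\Ho J_*^\Ieu \cong \Lbb(\colim_\Ieu^{\Ceu^\Jeu}\circ J_*^\Ieu)$, i.e.\ the outer functor is the non-homotopical one. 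This is \emph{not} automatic: in general, for $G$ homotopical and $F$ with an absolute total left derived functor, $\Lbb F\circ\Ho G$ need not be $\Lbb(F\circ G)$ (already with $\Ceu = \{*\}$, $G$ picking out a single object $D$, one computes that $\Lbb(F\circ G)$ is forced to be $H_\Eeu F(D)$ with identity counit, whereas $\Lbb F\circ\Ho G$ carries the counit $\lambda_D$, which is generally not an isomorphism). Note also that the composite $\ev_J\circ\colim_\Ieu^{\Ceu^\Jeu}\circ J_*^\Ieu$ is not a composite of three \emph{left} adjoints (the last factor $J_*^\Ieu$ is a right adjoint), so the ``compose adjunctions and use Remark~\ref{rem:composition of derived adjunctions}'' escape route is not available either. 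To make your argument work you would need to exhibit, say, a left $\colim_\Ieu^\Ceu$-deformation retract of $\Ceu^\Ieu$ that $J_*^\Ieu$ carries into a left $(\ev_J\circ\colim_\Ieu^{\Ceu^\Jeu})$-deformation retract of $(\Ceu^\Jeu)^\Ieu$, or some other explicit argument that the precomposition derives.
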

\begin{proof}
  If~$\Jeu = \emptyset$ the statement is void, so we assume we can pick some $J \in \Ob\Jeu$ to construct homotopy $\Ieu$\hyph colimits in~$\Ceu$ as shown in~\ref{prop:transfer of hocolims}. By our hypotheses, all functors~$J_*$ exist and are homotopical, as observed in~\ref{cor:homotopical adjoints to evaluations}. But then~$\Ho\Delta$ and~$\Rbb J_* = \Ho J_*$ clearly compose and we can invoke the theorem.
\end{proof}

\begin{corollary}
   Let $\Ceu$,~$\Ieu$,~$\Jeu$ be as in the theorem and assume that both~$\Ceu$ and~$\Ceu^\Jeu$ have homotopy $\Ieu$\hyph colimits. If~$\Ceu$ has powers and its weak equivalences are stable under them, there are isomorphisms
  \[ \sigma\colon {\hocolim_\Ieu}\circ\Ho\ev_J \cong \Ho\ev_J\circ{\hocolim_\Ieu}, \qquad\text{natural in }J \in \Ob\Jeu. \]
\end{corollary}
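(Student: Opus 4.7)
The strategy mirrors Corollary \ref{cor:special commutation of hocolims} exactly: verify the hypotheses of Theorem \ref{prop:hocolim and ev J} and let it do the work. The only difference is which branch of Corollary \ref{cor:homotopical adjoints to evaluations} supplies the homotopicality of the right adjoints~$J_*$.

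First, I would invoke the second alternative of Corollary \ref{cor:homotopical adjoints to evaluations}: since~$\Ceu$ has all powers and weak equivalences are stable under them, for each~$J \in \Ob\Jeu$ the right adjoint~$J_*\colon \Ceu \to \Ceu^\Jeu$ exists (given pointwise as $C^{\Jeu(-,J)}$) and is homotopical. The same observation one level up -- using Observation \ref{obs:beck-chevalley and evaluation}, which exhibits the lifted $J_*\colon \Ceu^\Ieu \to (\Ceu^\Ieu)^\Jeu$ as again computed pointwise by these powers, together with the fact that weak equivalences in~$\Ceu^\Ieu$ are pointwise from~$\Ceu$ -- shows the lifted~$J_*$ is also homotopical. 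Since the evaluation functors~$\ev_J$ are homotopical for the same reason, the absolute total right derived functors exist and are simply $\Rbb J_* \cong \Ho J_*$, yielding both vertical derived adjunctions $\Ho\ev_J \dashv \Rbb J_*$ of Theorem \ref{prop:hocolim and ev J}. The two horizontal derived adjunctions are built in by our assumption that both~$\Ceu$ and~$\Ceu^\Jeu$ have homotopy $\Ieu$-colimits.

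Finally, the composition condition that~$\Ho\Delta$ composes with~$\Rbb J_*$ is automatic: both~$\Delta$ and~$J_*$ are homotopical, hence so is $\Delta\circ J_* = J_*\circ\Delta$, and its absolute total right derived functor is just $\Ho(J_*\circ\Delta) = \Ho J_*\circ\Ho\Delta = \Rbb J_*\circ\Ho\Delta$. All hypotheses of Theorem \ref{prop:hocolim and ev J} being verified, the theorem delivers the desired natural isomorphism. There is no genuine obstacle here; the only point to watch is that the lifted~$J_*$ on~$\Ceu^\Ieu$ remains homotopical, which is exactly where the pointwise stability of weak equivalences under powers is genuinely used.
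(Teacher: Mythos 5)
Your proposal is correct and follows essentially the same route as the paper's (very terse) proof: both invoke the second branch of Corollary~\ref{cor:homotopical adjoints to evaluations} to get that the functors~$J_*$ exist and are homotopical, then observe that consequently~$\Rbb J_* = \Ho J_*$ and~$\Ho\Delta$ compose, so Theorem~\ref{prop:hocolim and ev J} applies. You spell out more carefully than the paper does why the lifted~$J_*$ on~$\Ceu^\Ieu$ is also homotopical and why all four derived adjunctions exist, but these are the same checks implicit in the paper's one-line argument.
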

\begin{proof}
  Again by~\ref{cor:homotopical adjoints to evaluations}, all functors~$J_*$ exist and are homotopical, so that $\Ho\Delta$ and~$\Rbb J_* = \Ho J_*$ compose.
\end{proof}

\begin{example}\label{ex:example1}
  The category~$\Ch_{\geq 0}(\Ab_{\rm fg})$ of non-negatively graded chain complexes of {\it finitely generated\/} abelian groups inherits a model structure from the usual model category of non-negatively graded chain complexes, where the weak equivalences are the quasi-isomorphisms, the fibrations are the levelwise surjections and the cofibrations the levelwise injections with projective (i.e.~free) cokernel. There are no obvious functorial cofibrant replacements within~$\Ch_{\geq 0}(\Ab_{\rm fg})$ and all the usual techniques fail since our base category is only finitely cocomplete, though the author was unable to find a proof in the literature that~$\Ch_{\geq 0}(\Ab_{\rm fg})$ is not cofibrantly generated.

Still, $\Ch_{\geq 0}(\Ab_{\rm fg})$ is a model category (albeit only a finitely bicomplete one) and we can construct finite homotopy colimits in it, as well as in all diagram categories~$\Ch_{\geq 0}(\Ab_{\rm fg})^\Ieu$ with~$\Ieu$ finite (see~\ref{rem:hocolims in diagram cats}). Since quasi-isomorphisms are stable under (finite) direct sums, the above corollary applies.
\end{example}

\begin{example}\label{ex:example2}
  As a more involved example, in~\cite{Isaksen2001} and~\cite{Isaksen2004}, Isaksen constructed model structures on the category of pro-simplicial sets and more general pro-objects in model categories. Being model categories, we can again construct homotopy colimits in them as well as all diagram categories. However, as shown in {\it op.~cit.}, the model structures in question are not cofibrantly generated and so, there are no projective model structures at our disposal. Still, our theorem applies and we at least always have \ref{cor:special commutation of hocolims} (since the stability of weak equivalences under powers depends on the model category we take pro-objects in).
\end{example}

Finally, let us mention an easy example that is not accessible to any of the previously mentioned techniques (model structures or left model approximations).
\begin{example}\label{ex:example3}
  Let~$\Deltabf$ be the usual indexing category for simplicial sets and $\Deltabf_+ \subset \Deltabf$ the wide subcategory of injections with inclusion $I\colon \Deltabf_+ \hookrightarrow \Deltabf$. The category of $\delta$\hyph {\it sets\/} (or {\it semi-simplicial sets\/}) is the presheaf category $\delta\hyph \Sets \defas \Sets^{\Deltabf_+}$. The left Kan extension~$I_!$ of a $\delta$\hyph set along~$I$ corresponds to freely adjoining degeneracies and we define a map of $\delta$\hyph sets to be a weak equivalence iff its image under~$I_!$ is one. Equivalently, a map of $\delta$\hyph sets is a weak equivalence iff it is one in~$\Top$ after geometric realisation.
  
  It was remarked by Peter May (who himself learned this from his student Matthew Thibault) that there cannot be a model structure with these weak equivalences. Still, we can construct homotopy colimits in~$\delta\hyph\Sets$ (or any category of diagrams in it) by passing to~$\sSets$ via~$I_!$, forming them there and then passing back to~$\delta\hyph\Sets$ via~$I^*$.
\end{example}

  In some situations (e.g.~when using left model approximations for the construction of homotopy colimits as in \cite{HToD}), it might not be possible to verify the hypotheses of the above theorem. Indeed, it might not even be possible to construct the right adjoint~$\Rbb J_*$. However, we can still use the left adjoint~$\Lbb J_!$ of~$\Ho\ev_J$ rather than~$\Rbb J_*$ to force naturality.
  \begin{proposition}\label{prop:hocolim and J shriek}
  Let~$\Ceu$ be a category with weak equivalences, $\Ieu$,~$\Jeu$ two index categories and~$J \in \Ob\Jeu$ such that all derived adjunctions in the square
  \[ \xymatrix@C=3.5em{ \Ho\bigl((\Ceu^\Ieu)^\Jeu\bigr)  \cong \Ho\bigl((\Ceu^\Jeu)^\Ieu\bigr) \ar@<-2.07em>[d]^{\Ho\ev_J}^{}="22" \ar@<1ex>[r]^-{\hocolim_\Ieu}^-{}="1" & \Ho(\Ceu^\Jeu) \ar@<1ex>[d]^{\Ho\ev_J}^{}="32" \ar@<1ex>[l]^-{\Ho\Delta}^-{}="2" \\ 
*+!<2.5em,0mm>{\Ho(\Ceu^\Ieu)} \ar@<2.93em>[u]^{\Lbb J_!}^{}="21" \ar@<1ex>[r]^-{\hocolim_\Ieu}^-{}="11" & \Ho\Ceu \ar@<1ex>[l]^-{\Ho\Delta}^-{}="12" \ar@<1ex>[u]^{\Lbb J_!}^{}="31"
  \ar@{}"1";"2"|(.3){\hbox{}}="3" \ar@{}"1";"2"|(.7){\hbox{}}="4" \ar@{|-} "4";"3"
  \ar@{}"11";"12"|(.3){\hbox{}}="13" \ar@{}"11";"12"|(.7){\hbox{}}="14" \ar@{|-} "14";"13"
  \ar@{}"21";"22"|(.3){\hbox{}}="23" \ar@{}"21";"22"|(.7){\hbox{}}="24" \ar@{|-} "24";"23"
  \ar@{}"31";"32"|(.3){\hbox{}}="33" \ar@{}"31";"32"|(.7){\hbox{}}="34" \ar@{|-} "34";"33"
  } \]
  exist and whose units we denote by $\eta$, $\theta$,~$\theta'$ and~$\eta'$ (top to bottom, left to right). Then there is
  \[ \alpha\colon {\hocolim_\Ieu}\circ\Lbb J_! \cong \Lbb J_!\circ{\hocolim_\Ieu} \]
  compatible with the composite units (and counits for that matter) in the sense that
  \[ \Ho(\ev_J\circ\Delta)\alpha\circ(\Ho\ev_J)\eta_{\Lbb J_!}\circ\theta = (\Ho\Delta)\theta'_{\hocolim_\Ieu}\circ\eta'. \]
\end{proposition}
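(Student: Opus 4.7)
The plan is to construct $\alpha$ as (the inverse of) the unique conjugate isomorphism in the sense of Corollary~\ref{cor:conjugate isomorphisms}, applied to two composite adjunctions with literally equal right adjoints; the compatibility equation then falls out of the defining equation for that conjugate.

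The starting observation is that $\ev_J$ and $\Delta$ are homotopical and satisfy $\ev_J \circ \Delta = \Delta \circ \ev_J$ strictly, so their derived versions commute on the nose: $\Ho\ev_J \circ \Ho\Delta = \Ho\Delta \circ \Ho\ev_J$. Composing the two horizontal derived adjunctions in the square with the two vertical ones yields two composite adjunctions
\[ \hocolim_\Ieu \circ \Lbb J_! \ \dashv\ \Ho\ev_J \circ \Ho\Delta \qquad \text{and} \qquad \Lbb J_! \circ \hocolim_\Ieu \ \dashv\ \Ho\Delta \circ \Ho\ev_J \]
between $\Ho(\Ceu^\Ieu)$ and $\Ho(\Ceu^\Jeu)$ whose right adjoints literally coincide. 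By the standard formula for composite units, their units are $(\Ho\ev_J)\eta_{\Lbb J_!} \circ \theta$ and $(\Ho\Delta)\theta'_{\hocolim_\Ieu} \circ \eta'$, respectively.

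Next, I would let $\rho \colon \Lbb J_! \circ \hocolim_\Ieu \to \hocolim_\Ieu \circ \Lbb J_!$ be the conjugate of $\tau \defas \id$ on the common right adjoint $\Ho\ev_J \circ \Ho\Delta = \Ho\Delta \circ \Ho\ev_J$ with respect to these two composite adjunctions; that is, the unique natural transformation satisfying
\[ \Ho(\ev_J\circ\Delta)\rho \circ (\Ho\Delta)\theta'_{\hocolim_\Ieu} \circ \eta' \ =\ (\Ho\ev_J)\eta_{\Lbb J_!} \circ \theta. \]
Since $\tau = \id$ is an isomorphism, Corollary~\ref{cor:conjugate isomorphisms} implies that $\rho$ is an isomorphism. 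Setting $\alpha \defas \rho^{-1}$ and precomposing both sides of the displayed equation with $\Ho(\ev_J\circ\Delta)\alpha$ (which cancels $\Ho(\ev_J\circ\Delta)\rho$) yields
\[ \Ho(\ev_J\circ\Delta)\alpha \circ (\Ho\ev_J)\eta_{\Lbb J_!} \circ \theta \ =\ (\Ho\Delta)\theta'_{\hocolim_\Ieu} \circ \eta', \]
which is precisely the stated compatibility. The parenthetical claim about counits follows by the dual argument, working with the composite counits of the same two composite adjunctions.

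The main obstacle is purely organisational: setting up the two composite adjunctions correctly, writing out their composite units (and counits), and recognising the proposition's compatibility equation as nothing more than the defining equation of a conjugate pair, evaluated at $\tau = \id$. Once those identifications are made, the existence and iso-ness of $\alpha$ reduce via the yoga of mates to the elementary fact that $\Ho\ev_J$ and $\Ho\Delta$ strictly commute because $\ev_J$ and $\Delta$ do.
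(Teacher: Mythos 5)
Your proposal is correct and follows essentially the same route as the paper, which dispatches this proposition in one line by noting that $\ev_J\circ\Delta = \Delta\circ\ev_J$ (both homotopical), so the square of right adjoints commutes on the nose; the existence and compatibility of $\alpha$ then follow from the theory of conjugate transformations exactly as you spell out. You could streamline slightly by defining $\alpha$ directly as the conjugate of the identity with the top/bottom roles swapped rather than taking $\rho^{-1}$, but the result is the same.
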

\begin{proof}
  Obviously~$\ev_J\circ\Delta = \Delta\circ\ev_J$, whence the square of right adjoints commutes.
\end{proof}

  While the compatibility formula in this proposition might seem useless at first, note that if the~$\Lbb J_!$ are fully faithful (or equivalently, if $\theta$ and~$\theta'$ are isomorphisms), we get a good starting point for comparing~$\eta$ and~$\eta'$.
\begin{corollary}\label{cor:compatibility of hocolim units}
  Under the hypotheses of \ref{prop:hocolim and J shriek}, if the two~$\Lbb J_!$ are fully faithful and there is some isomorphism~$\hocolim_\Ieu\circ\Ho\ev_J \cong \Ho\ev_J\circ\hocolim_\Ieu$ (e.g.~if $\hocolim_\Ieu$ and\/~$\Ho\ev_J$ compose), then there is even an isomorphism
  \[ \beta_J\colon {\hocolim_\Ieu}\circ\Ho\ev_J \cong \Ho\ev_J\circ{\hocolim_\Ieu}\text{ such that }\,(\Ho\ev_J)\eta = (\Ho\Delta)\beta_J\circ \eta'_{\Ho\ev_J}. \]
\end{corollary}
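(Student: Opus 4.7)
\emph{Proof proposal.} The plan is to build $\beta_J$ as an explicit composite from the iso $\alpha\colon\hocolim_\Ieu\circ\Lbb J_!\cong\Lbb J_!\circ\hocolim_\Ieu$ supplied by~\ref{prop:hocolim and J shriek}, to use the assumed iso $\gamma\colon\hocolim_\Ieu\circ\Ho\ev_J\cong\Ho\ev_J\circ\hocolim_\Ieu$ only to certify invertibility of one intermediate whiskering, and to derive the compatibility relation from the one in~\ref{prop:hocolim and J shriek} by a diagram chase. Write~$\epsilon$ for the counit of the top adjunction $\Lbb J_!\dashv\Ho\ev_J$; throughout I use the strict equality $\Ho\ev_J\circ\Ho\Delta=\Ho\Delta\circ\Ho\ev_J$ noted in~\ref{obs:beck-chevalley and evaluation}.

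Define
\[ \beta_J \defas \Ho\ev_J\hocolim_\Ieu\epsilon \;\circ\; \Ho\ev_J\alpha^{-1}_{\Ho\ev_J} \;\circ\; \theta'_{\hocolim_\Ieu\Ho\ev_J}. \]
The outer two factors are invertible since $\theta'$ and $\alpha$ are. For the remaining factor $\Ho\ev_J\hocolim_\Ieu\epsilon$, the triangle identity combined with the invertibility of $\theta$ yields $\Ho\ev_J\epsilon=\theta^{-1}_{\Ho\ev_J}$, so $\hocolim_\Ieu\Ho\ev_J\epsilon$ is an isomorphism; the naturality of $\gamma$ applied at the components of $\epsilon$ then gives
\[ \Ho\ev_J\hocolim_\Ieu\epsilon \;=\; \gamma\circ\hocolim_\Ieu\Ho\ev_J\epsilon\circ\gamma^{-1}_{\Lbb J_!\Ho\ev_J}, \]
a composite of isomorphisms. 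Hence $\beta_J$ is an isomorphism.

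For the compatibility, I whisker the $\alpha$\hyph relation from~\ref{prop:hocolim and J shriek} with $\Ho\ev_J$ on the right and invert the resulting whiskering of $\theta'$ to obtain
\[ \eta'_{\Ho\ev_J} \;=\; \Ho\Delta\,\theta'^{-1}_{\hocolim_\Ieu\Ho\ev_J} \circ \Ho\Delta\Ho\ev_J\alpha_{\Ho\ev_J} \circ \Ho\ev_J\eta_{\Lbb J_!\Ho\ev_J} \circ \theta_{\Ho\ev_J}. \]
Substituting into $\Ho\Delta\beta_J\circ\eta'_{\Ho\ev_J}$, the whiskered $\theta'$\hyph pair and the whiskered $\alpha$\hyph pair cancel in turn, reducing the composite to $\Ho\Delta\Ho\ev_J\hocolim_\Ieu\epsilon \circ \Ho\ev_J\eta_{\Lbb J_!\Ho\ev_J} \circ \theta_{\Ho\ev_J}$. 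Naturality of $\eta$ applied at $\epsilon$ rewrites $\Ho\Delta\hocolim_\Ieu\epsilon \circ \eta_{\Lbb J_!\Ho\ev_J}$ as $\eta\circ\epsilon$, after which the triangle identity $\Ho\ev_J\epsilon\circ\theta_{\Ho\ev_J}=\id$ collapses the entire expression to $\Ho\ev_J\eta$, as required.

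The main obstacle is purely organisational: maintaining the many left/right whiskerings and consistently using the strict equality $\Ho\ev_J\Ho\Delta=\Ho\Delta\Ho\ev_J$ so that domains and codomains line up. Conceptually, the only use of the assumed iso~$\gamma$ is to transport the invertibility of a counit\hyph whiskering across the two sides of the square; its specific form plays no role, so the proof really only needs that the two composites happen to be abstractly isomorphic.
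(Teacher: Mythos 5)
Your proof is correct and takes essentially the same approach as the paper's. Both arguments whisker the compatibility relation from~\ref{prop:hocolim and J shriek} with $\Ho\ev_J$, then use the naturality of the hocolim-unit~$\eta$ applied at the counit~$\epsilon$ of~$\Lbb J_!\dashv\Ho\ev_J$ together with the triangle identity (and full faithfulness of~$\Lbb J_!$, so $\Ho\ev_J\epsilon=\theta^{-1}_{\Ho\ev_J}$) to peel things down to~$\Ho\ev_J\eta$, and both use the assumed isomorphism~$\gamma$ only to transport invertibility of~$\hocolim_\Ieu\Ho\ev_J\epsilon$ across to~$\Ho\ev_J\hocolim_\Ieu\epsilon$. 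The only difference is presentational: you define~$\beta_J$ upfront and verify the equation forward, whereas the paper runs the chase backward and reads off~$\beta_J^{-1}$ at the end, but the~$\beta_J$ produced is the same.
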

\begin{proof}
  The compatibility condition of~$\alpha$ at its $(\Ho\ev_J)X$-component for a diagram~$X \in \Ob{\Ho\bigl((\Ceu^\Ieu)^\Jeu\bigr)}$ reads
  \[ (\Ho\Delta)\Bigl(\theta'^{-1}_{\hocolim_\Ieu}\!\circ(\Ho\ev_J)\alpha\Bigr)_{(\Ho\ev_J)X}\!\!\circ (\Ho\ev_J)\eta_{\Lbb J_!(\Ho\ev_J)X}\circ\theta_{(\Ho\ev_J)X} = \eta'_{(\Ho\ev_J)X} \]
  and we need to show that~$(\Ho\ev_J)\eta_{\Lbb J_!\ev_J X} = (\Ho\ev_J)\eta_X$ up to composition with suitable natural isomorphisms. For this, let us write~$\zeta$ for the counit of~$\Lbb J_!\dashv\Ho\ev_J$ and note that~$(\Ho\ev_J)\zeta$ is invertible with inverse~$\theta_{\Ho\ev_J}$. The naturality of~$\eta$ gives a commutative square
  \[ \xymatrix@C=7.5em{ (\Ho\ev_J)\Lbb J_!(\Ho\ev_J)X \ar[r]^-{(\Ho\ev_J)\eta_{\Lbb J_!(\Ho\ev_J)X}} \ar[d]_{(\Ho\ev_J)\zeta_X}^{\cong} & (\Ho\ev_J)(\Ho\Delta)\hocolim_\Ieu\Lbb J_!(\Ho\ev_J)X \ar[d]^{\cong}_{(\Ho\ev_J)(\Ho\Delta)\hocolim_\Ieu\zeta_X} \\ (\Ho\ev_J)X \ar[r]_-{(\Ho\ev_J)\eta_X} & (\Ho\ev_J)(\Ho\Delta)\hocolim_\Ieu X & *!<10em,.5ex>{,} } \]
  where the right-hand arrow $(\Ho\Delta)(\Ho\ev_J)\hocolim_\Ieu\zeta_X$ is invertible because~$\Ho\ev_J$ commutes with~$\hocolim_\Ieu$, while the left-hand arrow cancels with~$\theta_{(\Ho\ev_J)X}$ and so~$\eta'_{(\Ho\ev_J)X}$ equals
  \[ (\Ho\Delta)\Bigl(\theta'^{-1}_{\hocolim_\Ieu(\Ho\ev_J)}\circ(\Ho\ev_J)\alpha_{\Ho\ev_J} \circ \bigl((\Ho\ev_J)\hocolim_\Ieu\zeta\bigr)^{-1}\Bigr)_X \circ (\Ho\ev_J)\eta_X. \]
\end{proof}

  The last obstacle to overcome before assembling this all is that we need some criterion to decide when the~$\Lbb J_!$ are fully faithful. This turns out to be rather simple and it suffices if~$J_!$ is so (e.g.~as in \ref{prop:adjoints of evaluation}).%
\begin{proposition}\label{prop:fully faithful derived functor of J shriek}
  Let~$\Ceu$ be a category with weak equivalences, $\Jeu$ some index category and~$J \in \Ob\Jeu$ such that a fully faithful left adjoint~$J_!$ to~$\ev_J$ exists and~$J_!\dashv \ev_J$ is derivable. Then~$\Lbb J_!$ is fully faithful.
\end{proposition}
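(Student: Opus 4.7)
The plan is to identify the unit $\dot\theta$ of the derived adjunction $\Lbb J_! \dashv \Rbb\ev_J$ with the natural isomorphism that the functoriality of $\Lbb$ manufactures from the original unit $\theta\colon \id_\Ceu \cong \ev_J\circ J_!$. Since $J_!$ is fully faithful, $\theta$ is a natural isomorphism, and $\Lbb J_!$ being fully faithful is equivalent to $\dot\theta$ being one.

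The preparatory observation is that, by our pointwise convention on weak equivalences in $\Ceu^\Jeu$, the functor $\ev_J$ is homotopical. Hence we may take $\Rbb\ev_J = \Ho\ev_J$ with identity unit $\rho = \id$. Moreover, by Example~\ref{ex:quillen functors compose} (the homotopical case of the general composability result), $\Lbb J_!$ composes with $\Lbb\ev_J = \Ho\ev_J$, so that $(\Ho\ev_J\circ\Lbb J_!,\ \Ho\ev_J\lambda)$ is an absolute total left derived functor of $\ev_J\circ J_!$, where $\lambda$ denotes the counit of $\Lbb J_!$. On the other hand, $(\id_{\Ho\Ceu}, \id)$ is trivially an absolute total left derived functor of $\id_\Ceu$.

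Now functoriality of $\Lbb$ applied to the natural isomorphism $\theta\colon \id_\Ceu \cong \ev_J\circ J_!$ yields a natural transformation
\[ \Lbb\theta\colon \id_{\Ho\Ceu} \longrightarrow \Ho\ev_J\circ\Lbb J_! \]
characterised, by the defining equation of $L\sigma$ recalled in Section~2, as the unique arrow satisfying $\Ho\ev_J\lambda\circ(\Lbb\theta)_{H_\Ceu} = H_\Ceu\theta$. Since $\theta$ is invertible with inverse $\theta^{-1}$, applying $\Lbb$ to $\theta^{-1}$ provides an inverse to $\Lbb\theta$, so $\Lbb\theta$ is itself a natural isomorphism. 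On the other hand, Theorem~\ref{thm:derived functors are adjoint} characterises $\dot\theta$ as the unique natural transformation satisfying $\Rbb\ev_J\lambda\circ\dot\theta_{H_\Ceu} = \rho_{J_!}\circ H_\Ceu\theta$, and this reduces to $\Ho\ev_J\lambda\circ\dot\theta_{H_\Ceu} = H_\Ceu\theta$ once we substitute $\rho = \id$. The two universal properties coincide, so $\dot\theta = \Lbb\theta$ is a natural isomorphism, as required.

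There is no substantive obstacle in this argument; the entire content is recognising that the equation characterising $\dot\theta$ in the definition of the derived adjunction is literally the same as the one characterising $\Lbb\theta$ as the functorial image of $\theta$ under left derivation. The only bookkeeping care needed is in keeping $\rho$ and $\lambda$ straight and in citing the right composability statement to express the counit of the derived functor of $\ev_J\circ J_!$ as $\Ho\ev_J\lambda$.
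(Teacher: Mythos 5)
Your proof is correct and rests on the same ingredients as the paper's: homotopicality of $\ev_J$ (so that $\Rbb\ev_J = \Ho\ev_J$ with invertible $\rho$), composability of $\Ho\ev_J$ with $\Lbb J_!$, and invertibility of the strict unit coming from $J_!$ being fully faithful, all fed through the compatibility square~\ref{eqn:compatibility of units and counits}. The only organisational difference is that you package the conclusion as the identification $\dot\theta = \Lbb\theta$ and then invoke functoriality of $\Lbb$, whereas the paper reads the same equation directly and separately checks that the factor $(\Ho\ev_J)\lambda$ is invertible (via $\ev_J\circ J_!\cong\id_\Ceu$) before cancelling; this is a minor repackaging of the same argument.
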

\begin{proof}
  Choosing compatible units and counits of the derived adjunction, \ref{eqn:compatibility of units and counits} gives us
  \[ \rho_F\circ H_\Ceu\eta = (\Ho\ev_J)\lambda\circ\dot\eta_{H_\Ceu}, \]
  which is an isomorphism because~$\Rbb\ev_J = \Ho\ev_J$ (i.e.~$\rho$ is invertible) and~$J_!$ is fully faithful (i.e.~$\eta$ is invertible). Now note that~$\Lbb J_!$ and~$\Ho\ev_J$ compose, whence $\Ho\ev_J\circ\Lbb J_! \cong \Lbb(\ev_J\circ J_!)$. But~$\ev_J\circ J_! \cong \id_\Ceu$, so that~$\Lbb(\ev_J\circ J_!) \cong \id_{\Ho\Ceu}$ and~$\lambda$ must be invertible. It follows that~$\dot\eta_{H_\Ceu}$ (or equivalently~$\dot\eta$) is invertible, thus proving our claim.
\end{proof}
\begin{remark}
  Obviously, this proposition generalises mutatis mutandis to an arbitrary derivable adjunction with a fully faithful left and a homotopical right adjoint.
\end{remark}

\begin{example}
 The proposition in particular applies in both of the following cases:
  \begin{enumerate}
    \item $\Ceu$ has an initial object and~$\Jeu$ is a preorder;
    \item $\Ceu$ is a model category such that the projective model structure on~$\Ceu^\Jeu$ exists and $\End_\Jeu(J) = \{\id_J\}$ (e.g.~$\Ceu$ any model category and~$\Jeu$ direct).
  \end{enumerate}
\end{example}

\begin{theorem}\label{thm:pointwiseness of hocolim}
  Let~$\Ceu$ be a category with weak equivalences and $\Ieu$,~$\Jeu$ two index categories such that for all~$J \in \Ob\Jeu$, the two evaluation functors~$\ev_J$ indicated below have fully faithful left adjoints~$J_!$ and the derived adjunctions in the square
  \[ \xymatrix@C=3.5em{ \Ho\bigl((\Ceu^\Ieu)^\Jeu\bigr)  \cong \Ho\bigl((\Ceu^\Jeu)^\Ieu\bigr) \ar@<-2.07em>[d]^{\Ho\ev_J}^{}="22" \ar@<1ex>[r]^-{\hocolim_\Ieu}^-{}="1" & \Ho(\Ceu^\Jeu) \ar@<1ex>[d]^{\Ho\ev_J}^{}="32" \ar@<1ex>[l]^-{\Ho\Delta}^-{}="2" \\ 
  *+!<2.5em,0mm>{\Ho(\Ceu^\Ieu)} \ar@{^{c}->}@<2.93em>[u]^{\Lbb J_!}^{}="21" \ar@<1ex>[r]^-{\hocolim_\Ieu}^-{}="11" & \Ho\Ceu \ar@<1ex>[l]^-{\Ho\Delta}^-{}="12" \ar@{^{c}->}@<1ex>[u]^{\Lbb J_!}^{}="31"
  \ar@{}"1";"2"|(.3){\hbox{}}="3" \ar@{}"1";"2"|(.7){\hbox{}}="4" \ar@{|-} "4";"3"
  \ar@{}"11";"12"|(.3){\hbox{}}="13" \ar@{}"11";"12"|(.7){\hbox{}}="14" \ar@{|-} "14";"13"
  \ar@{}"21";"22"|(.3){\hbox{}}="23" \ar@{}"21";"22"|(.7){\hbox{}}="24" \ar@{|-} "24";"23"
  \ar@{}"31";"32"|(.3){\hbox{}}="33" \ar@{}"31";"32"|(.7){\hbox{}}="34" \ar@{|-} "34";"33"
  } \]
  exist and there is an isomorphism~$\hocolim_\Ieu\circ\Ho\ev_J \cong \Ho\ev_J\circ\hocolim_\Ieu$. Then there is a family of isomorphisms
  \[ \beta_J\colon \hocolim_\Ieu\circ\Ho\ev_J \cong \Ho\ev_J\circ\hocolim_\Ieu \qquad\text{natural in~$J$.} \]
\end{theorem}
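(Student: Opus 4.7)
The plan is to produce the family of isomorphisms one~$J$ at a time from the preceding two results, and then to establish naturality in~$J$ by a single abstract adjunction argument rather than by unpacking the explicit construction of each~$\beta_J$.

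The pointwise construction is immediate. Since each~$J_!\colon \Ceu \to \Ceu^\Jeu$ is assumed fully faithful, Proposition~\ref{prop:fully faithful derived functor of J shriek} shows that both~$\Lbb J_!$ are fully faithful as well. Together with the given isomorphism $\hocolim_\Ieu\circ\Ho\ev_J \cong \Ho\ev_J\circ\hocolim_\Ieu$, Corollary~\ref{cor:compatibility of hocolim units} then provides, for every~$J \in \Ob\Jeu$, an isomorphism
\[ \beta_J\colon \hocolim_\Ieu\circ\Ho\ev_J \cong \Ho\ev_J\circ\hocolim_\Ieu \]
uniquely characterised by the equation $(\Ho\ev_J)\eta = (\Ho\Delta)\beta_J\circ\eta'_{\Ho\ev_J}$, where $\eta$ and~$\eta'$ denote the units of the upper and lower derived adjunctions $\hocolim_\Ieu \dashv \Ho\Delta$ respectively.

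All the real work is then the naturality in~$J$. For $j\colon J \to J'$ in~$\Jeu$, I plan to invoke the tuning bijection of the lower adjunction $\hocolim_\Ieu \dashv \Ho\Delta$, which sends $\gamma\colon \hocolim_\Ieu\circ\Ho\ev_J \rar \Ho\ev_{J'}\circ\hocolim_\Ieu$ to $(\Ho\Delta)\gamma\circ\eta'_{\Ho\ev_J}$. Applied to the two legs $(\Ho\ev_j)_{\hocolim_\Ieu}\circ\beta_J$ and $\beta_{J'}\circ\hocolim_\Ieu\Ho\ev_j$ of the desired naturality square, a short diagram chase using the characterising equation for $\beta_J$ and~$\beta_{J'}$ together with the naturality of~$\eta'$ whiskered with~$\Ho\ev_j$ reduces both to the single transformation $(\Ho\ev_{J'})\eta\circ\Ho\ev_j = (\Ho\ev_j)_{\Ho\Delta\hocolim_\Ieu}\circ(\Ho\ev_J)\eta$, these two forms being equal by the interchange law for horizontal composition. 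Injectivity of the tuning bijection then forces the two legs to coincide, yielding the naturality square.

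The conceptual point---and the reason naturality in~$J$ holds at all---is that the family $\bigl((\Ho\ev_J)\eta\bigr)_J$ is visibly natural in~$J$, since $\eta$ is a single $J$\hyph independent $2$\hyph cell whiskered by the varying $2$\hyph cell~$\Ho\ev_j$; the uniqueness clause of Corollary~\ref{cor:compatibility of hocolim units} transports this intrinsic naturality through the tuning bijection into the naturality of~$\beta$. I anticipate no substantial obstacle beyond the bookkeeping of whiskerings and unit/counit compatibilities, which is the step most likely to require care in the actual write-up.
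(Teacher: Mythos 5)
Your proposal is correct and takes essentially the same route as the paper: both proofs produce the $\beta_J$ from Corollary~\ref{cor:compatibility of hocolim units} (with Proposition~\ref{prop:fully faithful derived functor of J shriek} supplying full faithfulness of $\Lbb J_!$) and then exploit the universal property of the lower unit~$\eta'$. The paper works componentwise at a fixed diagram~$X$, exhibiting $\beta_{J',X}^{-1}\circ(\hocolim_\Ieu X)j\circ\beta_{J,X}$ as the unique $f$ characterised by $\eta'$, whereas you phrase the same computation at the $2$-cell level via injectivity of the tuning bijection together with naturality of $\eta'$ and interchange; the underlying diagram chase is the same.
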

\begin{proof}
  For~$J \in \Ob\Jeu$ let us choose~$\beta_J$ as in \ref{cor:compatibility of hocolim units} and note that for~$j\colon J \to J'$ in~$\Jeu$ the arrow~$\hocolim_\Ieu X(-,j)$ is the unique~$f\colon \hocolim_\Ieu X(-,J) \to \hocolim_\Ieu X(-,J')$ in~$\Ho\Ceu$ making
  \[ \xymatrix@C=3em{ X(-,J) \ar[d]_{X(-,j)} \ar[r]^-{\eta'_{X(-,J)}} & (\Ho\Delta)\hocolim_\Ieu X(-,J) \ar@{..>}[d]^{(\Ho\Delta)f} \\ X(-,J') \ar[r]_-{\eta'_{X(-,J')}} & (\Ho\Delta)\hocolim_\Ieu X(-,J') } \]
  in~$\Ho(\Ceu^\Ieu)$ commute. Extending this square to the right using~$\beta_{J,X}$ and~$\beta_{J',X}$, the naturality of~$\Ho\ev_j$ yields a commutative rectangle of solid arrows in~$\Ho(\Ceu^\Ieu)$
  \[ \xymatrix@C=3.8em{ X(-,J) \ar[d]_{X(-,j)} \ar[r]^-{\eta'_{X(-,J)}} & (\Ho\Delta)\hocolim_\Ieu X(-,J) \ar@{..>}[d] \ar[r]^-{(\Ho\Delta)\beta_{J,X}}_{\cong} & (\Ho\Delta)(\hocolim_\Ieu X)J \ar[d]_{(\Ho\Delta)(\hocolim_\Ieu X)j} \\ X(-,J') \ar[r]_-{\eta'_{X(-,J')}} & (\Ho\Delta)\hocolim_\Ieu X(-,J) \ar[r]_-{(\Ho\Delta)\beta_{J',X}}^{\cong} & (\Ho\Delta)\hocolim_\Ieu X(-,J') } \]
  with the top and bottom composites being~$(\Ho\ev_J)\eta_X$ and~$(\Ho\ev_{J'})\eta_X$ respectively. Upon defining~$f \defas \beta_{J',X}^{-1}\circ(\hocolim_\Ieu X)j\circ\beta_{J,X}$ the dotted arrow~$(\Ho\Delta)f$ renders the entire diagram commutative and the claim follows.
\end{proof}

\bibliographystyle{amsplain}
\bibliography{homotopy.bib}

\end{document}